\title
{On edge exchangeable random graphs}
\date{21 February, 2017}
\author{Svante Janson}
\address{Department of Mathematics, Uppsala University, PO Box 480,
SE-751~06 Uppsala, Sweden}
\email{svante.janson@math.uu.se}
\urladdr{http://www.math.uu.se/svante-janson}
\subjclass[2010]{}
\subjclass[2010]{05C80;  05C65}
\numberwithin{equation}{section}
\renewcommand\le{\leqslant}
\renewcommand\ge{\geqslant}
\theoremstyle{plain}
\newtheorem{theorem}{Theorem}[section]
\newtheorem{lemma}[theorem]{Lemma}
\newtheorem{proposition}[theorem]{Proposition}
\newtheorem{corollary}[theorem]{Corollary}
\theoremstyle{definition}
\newtheorem{example}[theorem]{Example}
\newtheorem{definition}[theorem]{Definition}
\newtheorem{problem}[theorem]{Problem}
\newtheorem{remark}[theorem]{Remark}
\theoremstyle{remark}
\newenvironment{romenumerate}[1][-10pt]{
\addtolength{\leftmargini}{#1}\begin{enumerate}
 }{\end{enumerate}}
\newcounter{oldenumi}
{\setcounter{oldenumi}{\value{enumi}}
\begin{romenumerate} \setcounter{enumi}{\value{oldenumi}}}
{\end{romenumerate}}
\newcounter{thmenumerate}
\newcounter{xenumerate}   
\newcommand\xfootnote[1]{\unskip\footnote{#1}$ $} 
\newcommand\pfitemx[1]{\par#1:}
\newcommand\pfitemref[1]{\pfitemx{\ref{#1}}}
\newcommand\step[2]{\smallskip\noindent#1 \emph{#2}:}
\newcounter{steps}
\newcommand\stepx{\smallskip\noindent\refstepcounter{steps}%
 \emph{Step \arabic{steps}. }\noindent}
\newcommand{\refT}[1]{Theorem~\ref{#1}}
\newcommand{\refC}[1]{Corollary~\ref{#1}}
\newcommand{\refL}[1]{Lemma~\ref{#1}}
\newcommand{\refR}[1]{Remark~\ref{#1}}
\newcommand{\refS}[1]{Section~\ref{#1}}
\newcommand{\refSS}[1]{Section~\ref{#1}}
\newcommand{\refP}[1]{Proposition~\ref{#1}}
\newcommand{\refD}[1]{Definition~\ref{#1}}
\newcommand{\refE}[1]{Example~\ref{#1}}
\newcommand{\refStep}[1]{Step~\ref{#1}}
\newcommand\REM[1]{{\raggedright\texttt{[#1]}\par\marginal{XXX}}}
\newcommand\XREM[1]{\relax}
\xdef\klockan{\the\count1.0\the\count255}
\xdef\klockan{\the\count1.\the\count255}\fi
\newcommand{\sumi}{\sum_{i=1}^\infty}
\newcommand{\prodin}{\prod_{i=1}^n}
\newcommand\set[1]{\ensuremath{\{#1\}}}
\newcommand\bigset[1]{\ensuremath{\bigl\{#1\bigr\}}}
\newcommand\xpar[1]{(#1)}
\newcommand\bigpar[1]{\bigl(#1\bigr)}
\newcommand\Bigpar[1]{\Bigl(#1\Bigr)}
\newcommand\xcpar[1]{\{#1\}}
\newcommand\bigabs[1]{\bigl|#1\bigr|}
\newcommand\Bigabs[1]{\Bigl|#1\Bigr|}
\def\rompar(#1){\textup(#1\textup)}    
\newcommand\xpqfrac[2]{(#1)/(#2)}
\newcommand\Bigparfrac[2]{\Bigpar{\frac{#1}{#2}}}
\def\xexp(#1){e^{#1}}
\newcommand\ceil[1]{\lceil#1\rceil}
\newcommand\Bigceil[1]{\Bigl\lceil#1\Bigr\rceil}
\newcommand\floor[1]{\lfloor#1\rfloor}
\newcommand\ntoo{\ensuremath{{n\to\infty}}}
\newcommand\Ntoo{\ensuremath{{N\to\infty}}}
\newcommand\mtoo{\ensuremath{{m\to\infty}}}
\newcommand\ttoo{\ensuremath{{t\to\infty}}}
\newcommand\xtoo{\ensuremath{{x\to\infty}}}
\newcommand\norm[1]{\|#1\|}
\newcommand\punkt{.\spacefactor=1000}    
\newcommand\iid{i.i.d\punkt}    
\newcommand\ie{i.e\punkt}
\newcommand\eg{e.g\punkt}
\newcommand\cf{cf\punkt}
\newcommand{\as}{a.s\punkt}
\newcommand{\aex}{a.e\punkt}
\newcommand{\tend}{\longrightarrow}
\newcommand\dto{\overset{\mathrm{d}}{\tend}}
\newcommand\pto{\overset{\mathrm{p}}{\tend}}
\newcommand\asto{\overset{\mathrm{a.s.}}{\tend}}
\newcommand\op{o_{\mathrm p}}
\newcommand\Op{O_{\mathrm p}}
\newcommand\bbR{\mathbb R}
\newcommand\bbN{\mathbb N}
\newcommand\bbNo{\mathbb N_0}
\newcommand\bbNow{\bbN\cup\set{0,-1}}
\newcommand\bbNw{\bbN\cup\set{-1}}
\newcommand\bbZ{\mathbb Z}
\newcounter{CC}
\newcommand{\CC}{\stepcounter{CC}\CCx} 
\newcommand{\CCx}{C_{\arabic{CC}}}     
\newcommand{\CCdef}[1]{\xdef#1{\CCx}}     
\newcommand{\CCname}[1]{\CC\CCdef{#1}}    
\newcommand{\CCreset}{\setcounter{CC}0} 
\newcounter{cc}
\newcommand{\cc}{\stepcounter{cc}\ccx} 
\newcommand{\ccx}{c_{\arabic{cc}}}     
\newcommand{\ccreset}{\setcounter{cc}0} 
\newcommand\E{\operatorname{\mathbb E{}}}
\renewcommand\P{\operatorname{\mathbb P{}}}
\newcommand\Var{\operatorname{Var}}
\newcommand\Cov{\operatorname{Cov}}
\newcommand\Exp{\operatorname{Exp}}
\newcommand\Po{\operatorname{Po}}
\newcommand\Be{\operatorname{Be}}
\newcommand\ga{\alpha}
\newcommand\gd{\delta}
\newcommand\gf{\varphi}
\newcommand\gam{\gamma}
\newcommand\gG{\Gamma}
\newcommand\gl{\lambda}
\newcommand\gO{\Omega}
\newcommand\gs{\sigma}
\newcommand\gss{\sigma^2}
\newcommand\gth{\theta}
\newcommand\eps{\varepsilon}
\newcommand\cA{\mathcal A}
\newcommand\cE{\mathcal E}
\newcommand\cF{\mathcal F}
\newcommand\cL{{\mathcal L}}
\newcommand\cS{{\mathcal S}}
\newcommand\cW{\mathcal W}
\newcommand\tG{\tilde G}
\newcommand\tP{\widetilde P}
\newcommand\ett[1]{\boldsymbol1\xcpar{#1}} 
\newcommand\indic[1]{\boldsymbol1_{\xcpar{#1}}}
\newcommand\etta{\boldsymbol1}
\newcommand\qw{^{-1}}
\newcommand\oi{\ensuremath{[0,1]}}
\newcommand\ooo{[0,\infty)}
\newcommand\setoi{\set{0,1}}
\newcommand\dd{\,\mathrm{d}}
\newcommand\lhs{left-hand side}
\newcommand\rhs{right-hand side}
\newcommand\xij{_{ij}}
\newcommand\qx{^*}
\newcommand\csx{\cS\qx}
\newcommand\csxx[1]{\cS^{\lor#1}}
\newcommand\csxn{\csxx{n}}
\newcommand\csxz{\cS^{**}}
\newcommand\cszz[1]{\cS^{\land#1}}
\newcommand\cszn{\cszz{n}}
\newcommand\GGq{\Gq^*}
\newcommand\Gq{G}
\newcommand\GG{\G^*}
\newcommand\G{\tilde{G}}
\newcommand\GGm{\GGq_m}
\newcommand\Gm{\Gq_m}
\newcommand\GGt{\GG_t}
\newcommand\Gt{\G_t}
\newcommand\ggndd{\ensuremath{\GG(n,\ddn)}}
\newcommand\nnn{_{n=1}^\infty}
\newcommand\mmm{_{m=1}^\infty}
\newcommand\iii{_{i=1}^\infty}
\newcommand\gsf{$\gs$-finite}
\newcommand\bmu{\bar\mu}
\newcommand\pset[1]{(\set{#1})}
\newcommand\ddn{(d_i)_{i=1}^n}
\newcommand\hP{\hat P}
\newcommand\GEM{\mathrm{GEM}}
\newcommand\PD{\mathrm{PD}}
\newcommand\DIR{\operatorname{Dir}}
\newcommand\tgs{\tilde\gs}
\newcommand\tgss{\tilde\gs^2}
\newcommand\bI{\bar I}
\newcommand\bIx{\bar I'}
\newcommand\KK{X}
\newcommand\ellN{N}
\newcommand\cn[1]{\norm{#1}\cut}
\newcommand\cut{_{\square}}
\newcommand\dcut{\delta_{\square}}
\newcommand\norml[1]{\norm{#1}_{L^1}}
\newcommand\mpp{measure-preserving}
\newcommand\ellt{\ell(t)}
\newcommand\xxH{H}
\newcommand\Beta{\operatorname{Beta}}
\newcommand\gGq{\gG_{\mathsf{half}}}
\newcommand\NN{N}
\newcommand\NNx{N^*}
\newcommand\gek{_{\ge k}}
\newcommand\qgam{^{1/\gam}}
\newcommand\tlogt{t/\log t}
\newcommand\tlogtt{t/\log^2t}
\newcommand\bbRp{\bbR_+}
\newcommand\bG{\bar G}
\newcommand\togp{\to_{\mathsf{GP}}}
\newcommand\togs{\to_{\mathsf{GS}}}
\newcommand\hwt{\hat W_t}
\newcommand\tgg{t^{1/2\gam}}
\newcommand\ZZ{(Z_{kl})_{k,l}}
\newcommand\ZZZ{(Z'_{kl})_{k,l}}
\newcommand\bS{\bar S}
\newcommand\Sx{S^*}
\newcommand\XX{\mathbf{X}}
\newcommand{\Polya}{P\'olya}
\newcommand{\Lovasz}{Lov\'asz}
\begin{document}

\begin{abstract} 
We study a recent model for edge exchangeable random graphs introduced by
Crane and Dempsey; in particular we study asymptotic
properties of the random simple graph
obtained by merging multiple edges.
We study a number of examples, and show
that the model can produce dense, sparse and extremely sparse random graphs.
One example yields a power-law degree distribution.
We give some examples where the random graph is dense and converges a.s.\ in the
sense of graph limit theory, 
but also an example where a.s.\ every graph limit is the limit of some
subsequence. Another example is sparse and yields convergence to a
non-integrable generalized graphon defined on $(0,\infty)$.
\end{abstract}

\maketitle


\section{Introduction}\label{S:intro}

A model for edge exchangeable random graphs and
hypergraphs was recently introduced by
\citet{CraneD-edge,CraneD-relational}, 
who also gave a representation theorem
showing that every infinite
edge exchangeable random hypergraph can be constructed by this model.
An 
equivalent model, using somewhat different formulations,
was given by \citet{BroderickCai} and \citet{CampbellCaiBroderick}, see
\refR{Rcai}. 

The idea of the model is that random \iid{} edges,
with an arbitrary distribution, are added to a fixed
vertex set; see
\refS{Sdef} for a detailed definition
(slightly modified but equivalent to the original definition). 

The general model defines a random hypergraph.
In the present paper, we concentrate on the graph case, although we state
the definitions in \refS{Sdef} more generally for hypergraphs.

Since edges can be repeated, 
the model defines a random multigraph, but this can as always be
reduced to a random simple graph by identifying parallel edges and
deleting loops. 
Typically, many of the edges will be repeated many times, see \eg{}
\refR{Rmany}, and thus the multigraph and the simple graph versions can be
expected to be quite different.
Both versions have interest and potential, possibly
different, applications,
and we consider both 
versions.
Previous papers
concentrate on the multigraph version;
in contrast and as a complement, in the present paper we
study mainly the simple graph version.

The model is, as said above, based on an arbitrary distribution of edges.
Different choices of this distribution  can give a wide range of 
different types of random graphs, and the main purpose of the paper is to
investigate the types of random graphs that may be created by this model;
for this purpose we 
give some general results on the numbers of vertices and edges, and a number
of examples ranging from dense to very sparse graphs.
The examples show that the model can produce very different graphs. 
In some dense examples we show that the random graphs converge in the sense
of graph limit theory. However, that is not always the case, and we even
give a chameleon example (\refT{Tchameleon})
that has every graph limit as the limit of some
subsequence. 
We give also a sparse example (\refE{Epower}) with a power-law degree
distribution and convergence to a generalized graphon in the sense of
\citet{VR2}. 

An important tool in our investigations is a Poisson version of the
construction by \citet{CraneD-relational}, see
\refSS{SSdefPo}, 
which seems interesting also in its own right.

After some preliminaries in Sections \ref{Snot}--\ref{Slim}, we give the
definitions of the random hypergraphs in detail in \refS{Sdef}.
The graph case is discussed further in \refS{Sgraphs}.
\refS{Snumber} studies the numbers of vertices and edges in the graphs.
\refS{Srank1} considers an important special case of the model, called
\emph{rank 1}; we study two multigraph examples previously considered by
\citet{CraneD-edge} and \citet{Pittel} and show that they are of this
type.

The remaining sections consider various examples of the simple graph
version,
with dense examples in \refS{Sdense}, and sparse examples in Sections
\ref{Ssparse} and \ref{SXsparse}.
Finally, we give some tentative conclusions in \refS{Sconclusion}.

\section{Some notation} \label{Snot}

In general, we allow hypergraphs to have multiple edges; 
we sometimes (but usually not) say
multihypergraph for emphasis.
Moreover, the edges in a hypergraph may  have repeated vertices,
\ie, the edges are in general multisets of vertices, see \refR{Rmulti}.
An edge with repeated vertices is called a loop.
A simple hypergraph is a hypergraph without multiple edges and loops.
(\emph{Warning}: 
different authors give different meanings to 'simple hypergraph'.) 

The vertex and edge sets of a multigraph $G$ are denoted by $V(G)$ and
$E(G)$,
and the numbers of vertices and edges by $v(G):=|V(G)|$ and $e(G):=|E(G)|$.

$f(x)\sim g(x)$ means $f(x)/g(x)\to1$ 
(as $x$ tends to some limit, \eg{} $x\to\infty$).
We also use $v\sim w$ for adjacency of two vertices $v$ and $w$ in a given
graph, and $X\sim\cL$ meaning that the random variable $X$ has distribution
$\cL$; there should not be any risk of confusion between these (all
standard) uses of $\sim$.

$f(x)\asymp g(x)$ for two non-negative functions 
or sequences $f(x)$ and $g(x)$ (defined on some
common set $S$) means that $f/g$ and $g/f$ both are bounded; equivalently,
there exist constants $c,C>0$ such that $cg(x)\le f(x)\le C g(x)$
for every $x\in S$. 
$f(x)\asymp g(x)$ as \xtoo{} means that 
$f(x)\asymp g(x)$ for 
$x$ in some interval $[x_0,\infty)$.

We use 'increasing' (for a function or a sequence) in its weak sense 
\ie, $x\le y\implies f(x)\le f(y)$,
and similarly with  'decreasing'.

$x\land y$ is  $\min\set{x,y}$
and $x\lor y$ is $\max\set{x,y}$.

$\bbN:=\set{1,2,\dots}$ and $\bbNo:=\set{0,1,2,\dots}$.
$[n]:=\set{1,\dots,n}$.

If $\mu$ is a measure on a set $\cS$, then $\norm\mu:=\mu(\cS)\le\infty$.

$\Exp(\gl)$ denotes 
the exponential distribution with \emph{rate} $\gl$,
\ie, the first point in a Poisson process with rate $\gl$; this is thus the
exponential distribution with mean $1/\gl$.
For convenience we extend this to $\gl=0$: $X\sim\Exp(0)$ means $X=+\infty$ a.s.

We say that a sequence $G_n$ of simple graphs with $v(G_n)\to\infty$
is \emph{dense} if $e(G_n)\asymp v(G_n)^2$,
\emph{sparse} if $e(G_n)=o(v(G_n)^2)$,
and \emph{extremely sparse} if $e(G_n)\asymp v(G_n)$ 
as \ntoo,
and similarly for a family $G_t$ of
graphs with a continuous parameter.

We let $C,c,C_1,c_1,\dots$ denote various unspecified positive constants.

\section{Some preliminaries on graph limits, graphons and cut metric}\label{Slim}

We recall some basic facts on graph limits and graphons. 
For further details, see 
\eg{} \cite{BCLSV1,BCLSV2}, \cite{SJ209} and the comprehensive book
\cite{Lovasz}. 

A (standard) \emph{graphon} is a symmetric measurable function
$W:\gO\times\gO\to\oi$,  where $\gO=(\gO,\cF,\mu)$ is a probability space.
($\gO$ may without loss of generality be taken as $\oi$ with Lebesgue
measure, but it is sometimes convenient to use other probability spaces too.)

If $\gf:\gO_1\to\gO_2$ is a \mpp{} map between two probability spaces
$\gO_1$ and $\gO_2$, and $W$ is a graphon  on $\gO_2$, then
$W^\gf(x,y):=W(\gf(x),\gf(y))$ is a graphon on $\gO_1$ called the
\emph{pull-back} of $W$.

If $W$ is an integrable function on $\gO^2$, then its \emph{cut norm} is
\begin{equation}
  \cn{W}:=\sup\Bigabs{\int_{T\times U}W(x,y)\dd\mu(x)\dd\mu(y)},
\end{equation}
taking the supremum over all measurable sets $T,U\subseteq\gO$.

For two graphons $W_1$ and $W_2$, defined on probability spaces $\gO_1$ and
$\gO_2$, their \emph{cut distance} is defined as
\begin{equation}
  \dcut(W_1,W_2) = \inf_{\gf_1,\gf_2}\cn{W_1^{\gf_1}-W_2^{\gf_2}},
\end{equation}
taking the infimum over all pairs $(\gf_1,\gf_2)$ of \mpp{} maps
$\gf_j:\gO\to\gO_j$ defined on some common probability space $\gO$.

Two graphons $W_1$ and $W_2$ are \emph{equivalent} if $\dcut(W_1,W_2)=0$.
Note that a graphon $W$ and any pullback $W^{\gf}$ of it are equivalent. For
characterizations of equivalent graphons, see \cite{BCL:unique} and
\cite[Section 8]{SJ249}.
The cut distance $\dcut$ can be regarded as a metric on the set $\cW$ of
equivalence classes of graphons, and makes $\cW$ into a compact metric space.

A \emph{graph limit} can be identified with an equivalence class of
graphons, so we can regard $\cW$ as the space of graph limits. 
Thus, every graphon defines a graph limit, and every
graph limit is represented by some graphon, but this graphon is unique only
up to equivalence.

For every finite graph $G$, there is a corresponding graphon $W_G$ that can
be defined by taking $\gO=V(G)$ with the uniform probability measure
$\mu\set{i}=1/v(G)$ for every $i\in V(G)$ and letting $W_G(i,j)=\indic{i\sim
  j}$; thus $W_G$ equals the adjacency matrix of $G$, regarded as a function
$V(G)^2\to\setoi$. ($W_G$ is often defined as an equivalent
graphon on $\oi$; for us this makes no difference.)
We identify $G$ and $W_G$ when convenient, and write for example
$\dcut(G,W)=\dcut(W_G,W)$ for a graph $G$ and a graphon $W$.
\xfootnote{
Usually I keep a distiction between graphs and graphons (and graph limits);
this is easiest done by identifying a graph $G$ with the pair
$(W_G,1/v(G))\in \cW\times\oi$ (and a graphon $W$ with $(W,0)$), see also
\cite{SJ209}. In the present paper, this point of view is not needed.
}

\begin{remark}\label{Rblowup}
Let $G$ be a finite graph.
A \emph{blow-up} $G^*$ of $G$ is the graph obtained by taking,
for some integer $m\ge1$,
the vertex set $V(G^*)=V(G)\times[m]$ with $(v,i)\sim(w,j)$ in $G^*$ if and
only if $v\sim w$ in $G$.
Then, $W_{G^*}$ is a pull-back of $W_G$ (for $\gf:V(G^*)\to V(G)$ the
natural projection), and thus 
$\dcut(G^*,G)=\dcut(W_G,W_{G^*})=0$.
Hence the graphs $G$ and $G^*$, which are different (if $m>1$) are
equivalent when regarded as graphons.
\end{remark}

There are several, quite different but nevertheless equivalent, 
ways to define convergence of a sequence of graphs, see \eg{}
\cite{BCLSV1,BCLSV2,SJ209,Lovasz}.
For our purposes it suffices to know that
a sequence $G_n$ with $v(G_n)\to\infty$ is \emph{convergent} if 
and only if there exists a graphon $W$ such that $\dcut(G_n,W)\to0$ as \ntoo.
We then say that $G_n$ converges to $W$, or to the corresponding graph limit. 

\subsection{Sparse graph limits and graphons}\label{Sgraphex}

The standard graphons defined above are appropriate for dense graphs.
In \refS{Ssparse} we consider an example with sparse graphs, where we
can show that the graphs $\Gt$ and $\Gm$ converge in a suitable sense to
a more general type of graphons defined by \citet{VR} as a symmetric
measurable function $W:\bbRp^2\to\oi$, see also \cite{BCCH16}.

\citet{VR2} defined two notions $\togp$ and $\togs$
of convergence for such general graphons on $\bbRp$ 
(and the even more general graphexes)
based on convergence in distribution of the corresponding random graphs.
Given a graphon $W$ on $\bbRp$, we define, see \cite{CaronFox,BCCH16,VR},
for each $r\ge0$ 
an unlabelled 
random graph $G_r(W)$  by taking
a Poisson process with intensity $r$ on $\bbRp$,  
regarded as a random point set  $\set{\eta_i}_{i\ge 1}$;
given a realization of this
Poisson process, we let $\bG_r(W)$ be the graph with vertex set $\bbN$ and an
edge $ij$ with probability $W(\eta_i,\eta_j)$ for every pair $(i,j)$ with
$i<j$;
finally we let $G_r(W)$ be the graph obtained by deleting all isolated
vertices from $\bG_r(W)$, and then ignoring labels.
(We assume that $W$ is such that $G_r(W)$ is \as{} finite, see \cite{VR} for
precise conditions.)
We can now define $W_n\togp W$ as meaning $G_r(W_n)\dto G_r(W)$ for each
$r<\infty$, 
see further \cite{VR2} and \cite{SJ317}. 

Furthermore,
the random graphs $G_r(W)$ are naturally coupled for different $r$ and form
an increasing graph process $(G_r(W))_{r\ge0}$. Let $(G_{\tau_k}(W))_k$ be
the sequence of different graphs that occur among $G_r(W)$ for $r\ge0$.
Then $W_n\togs W$ if
$(G_{\tau_k}(W_n))_k\dto (G_{\tau_k}(W))_k$; again
see further \cite{VR2} and \cite{SJ317}. 

\begin{remark}
We have here given the version of $G_r(W)$ without
loops;
more generally, one can allow $i=j$ in the construction and thus allow
loops. The loopless case considered here then is obtained by imposing
$W(x,x)=0$ for $x>0$. Hence, for the version with loops, \refT{Tgraphex} below
still holds, provided we redefine $W$ to be 0 on the diagonal.
\end{remark}

In the context of graphons on $\bbRp$, we define 
a modification of the graphon $W_G$ defined above for a finite graph $G$
(which we may assume has vertices labelled $1,\dots,v(G)$):
For every 
$s>0$, we let the \emph{stretched graphon}
$W_{G,s}$ be the function $\bbRp^2\to\setoi$ given by
$W_{G,s}(x,y):=W_G(\ceil{sx},\ceil{sy})$, where 
$W_G(i,j)=\indic{i\sim j}$ is as
above, extended by $W_G(i,j)=0$ when $i\lor j>v(G)$. 
Hence, every vertex in $G$ corresponds to an interval of length $1/s$ in the
domain of $W_{G,s}$.

\section{Constructions of random hypergraphs}\label{Sdef}
In this section, we define the random hypergraphs.
We give several versions; we define both multihypergraphs and simple
hypergraphs, 
and we give both the original version with a fixed number of edges and a
Poisson version.

In later sections we consider only the graph case, but we give the
definitions here in greater generality.

  Note that the edge exchangeable random hypergraphs constructed  
here are quite different from the vertex exchangeable graphs 
in
\eg{} \cite{BCLSV1,BCLSV2,Lovasz,CaronFox,VR,BCCH16}, see \refSS{SSvex}.

We begin with some preliminaries.

Let $(\cS,\cF)$ be a measurable space, for convenience usually denoted
simply by $\cS$.
To avoid uninteresting technical complications, we assume that $\cS$ is a
Borel space, \ie, isomorphic to a Borel subset of a complete separable
metric space with its Borel $\gs$-field.

Let $\csx$ be the set of all finite non-empty multisets of points in $\cS$.
We can regard a multiset with $n$ elements as an equivalence class of
sequences $(x_1,\dots,x_n)\in\cS^n$, where two such sequences are equivalent
if one is a permutation of the other. Denoting this equivalence relation by
$\cong$ 
and the set of multisets of $n$ elements in $\cS$ by $\csxn$, we thus have
$\csxn=\cS^n/{\cong}$ and $\csx=\bigcup_{n=1}^\infty\csxn$.
Note that $\csxn$ and $\csx$ are Borel spaces.
(One way to see this is to recall that every Borel space is isomorphic to a
Borel subset of $\oi$. We may thus assume that $\cS\subseteq\oi$, and then we
can redefine $\csxn$ as $\set{(x_1,\dots,x_n)\in\cS^n:x_1\le\dots\le x_n}$,
which is a Borel subset of $\oi^n$.)

\begin{remark}
   Definitions \ref{D1} and \ref{D2} below use a probability
measure $\mu$ to define
the random (hyper)graphs. In general, this measure may be a random measure,
and then the constructions should be interpreted by conditioning on $\mu$, \ie,
by first sampling $\mu$, and then using the obtained measure throughout the
construction. In other words, the distribution of the random hypergraphs
constructed by a random measure $\mu$ is a mixture of the distributions
given by deterministic $\mu$. 
For convenience, and because most examples will be with deterministic $\mu$,
we usually tacitly assume that $\mu$ is deterministic; results in the
general case with random $\mu$ then follow by conditioning on $\mu$.
(See \refR{Rpo} for a typical example, where this for once is stated
explicitly.) 
\end{remark}

\subsection{Random hypergraphs with a given number of edges}
\label{SSdefn}
We give a minor modification of the original definition by
\citet{CraneD-edge,CraneD-relational}; we will see att the end of this
subsection that our definition is equivalent to the original one.

\begin{definition}\label{D1}
  Given a Borel space $\cS$ and
a probability measure $\mu$  on $\csx$,  
define a sequence of finite random (multi)hypergraphs $(\GGm)\mmm$
as follows. 
Let
$Y_1,Y_2,\dots$ be an infinite sequence of \iid{} random multisets with
the distribution $\mu$, and let for every 
$m\in\bbN$ the hypergraph $\GGm$
be the multihypergraph with edges $Y_1,\dots,Y_m$
and vertex set $\bigcup_{i=1}^m Y_i$, \ie, the vertex set spanned by the
edges. (Thus there are no isolated vertices in $\GGm$.)

We also similarly define the infinite (multi)hypergraph $\GGq_\infty$ having 
edges $(Y_i)\iii$.

The edges in $\GGm$ may be repeated, so $\GGm$ is in general a random
multihypergraph. We define $\Gm$ as the simple hypergraph obtained by merging
each set of parallel edges in $\GGm$ to a single edge and deleting loops; thus
$\Gm$ has edge set $E(\Gm)=\set{Y_i:i\le m,\,Y_i\text{ not loop}}$ 
and vertex set
$V(\Gm)=\bigcup_{Y_i\in E(\Gm)} Y_i\subseteq V(\GGm)$.
\end{definition}

Note that $\GGq_1\subset \GGq_2\subset \dots$,
and thus $\Gq_1\subseteq \Gq_2\subseteq \dots$, 
\ie,  $(\GGm)_m$ and $(\Gm)_m$ are increasing
sequences of random hypergraphs.

\begin{remark}\label{Rmulti}
We follow \cite{CraneD-relational} and allow for increased generality
 $Y_i$ to be a multiset
(see \eg{} the examples in \refS{Srank1});
thus the edges in $\GGm$ and $\Gm$ are multisets and 
may contain repeated  vertices.
If we choose $\mu$ with support in the set 
$\csxz:=\bigcup_{n=1}^\infty \cszn\subset\csx$ of finite subsets of $\cS$, where
$\cszn\subset\csxn$ is the set of subsets of $\cS$ with $n$ distinct elements,
then the edges in $\GGm$ and $\Gm$ are ordinary
sets of vertices (\ie, without repeated vertices).
(This is commonly assumed in the definition of hypergraphs.)

In particular, if $\mu$ has support in
$\cszz2=\set{\set{x,y}:x,y\in\cS,\,x\neq y}$, then
$\GGm$ is a multigraph without loops, and $\Gm$ is a simple graph with 
$V(\Gm)=V(\GGm)$.
\end{remark}

The construction above yields hypergraphs with vertices labelled by elements
of $\cS$. We (usually)
ignore these labels and regard $\GGm$ and $\Gm$ as
unlabelled hypergraphs.

\begin{remark}\label{RdeFinetti}
  We usually also ignore the labels on the edges.
If we keep the labels $i$ on the edges $Y_i$, then the distribution of
$\GGm$ is obviously edge exchangeable, \ie, invariant under permutations
of these edge labels, because $(Y_i)_i$ is an \iid{} sequence. 
Conversely, as shown by \citet[Theorem 3.4]{CraneD-relational}, 
every infinite edge exchangeable hypergraph is a mixture of random
hypergraphs $\GGq_\infty$, \ie, it can be constructed as above using a
random measure $\mu$. 
In the present formulation, the proof in
\cite{CraneD-relational} simplifies somewhat: Give the vertices in the edge
exchangeable hypergraph random labels that are \iid{} and $U(0,1)$
(uniformly distributed on $\oi$), and independent of the edges.
Then the edges become multisets in $\oi\qx$, and their distribution is
clearly exchangeable, so by de Finetti's theorem, the edges are given
by the construction above for some random probability measure $\mu$ on
$\csx$, taking $\cS=\oi$.
\end{remark}

It is obvious from the definition that if $\psi:\cS\to\cS_1$ is 
an injective measurable
map of $\cS$ into another measurable (Borel) space $\cS_1$, then $\mu$ is
mapped to a probability measure $\mu_1$ on $\csx_1$, which defines the same
random hypergraphs $\GGm$ and $\Gm$ as $\mu$.
Hence, the choice of Borel space $\cS$ is not important, and we can always
use \eg{} $\cS=\oi$.
Moreover, we can simplify further.

Define the \emph{intensity} of $\mu$ as the measure on $(\cS,\cF)$
\begin{equation}\label{bmu}
  \bmu(A):=\E|A\cap Y|,
\qquad A\in\cF,
\end{equation}
where $Y$ has distribution $\mu$.
Note that for a singleton set $\set{x}$, 
$|\set{x}\cap Y|=\indic{x\in Y}$, and thus
\eqref{bmu} yields
\begin{equation}\label{bmux}
  \bmu(\set x)=\P(x\in Y).
\end{equation}
We have $\bmu(A)=\sum\nnn\bmu_n(A)$, where 
$\bmu_n(A):=\E\bigpar{|A\cap Y|\cdot\indic{|Y|=n}}$, and since each $\bmu_n$ is a
finite measure, it follows that 
the set of atoms
\begin{equation}\label{atoms}
\cA:=\set{x\in\cS:\bmu(\set {x})>0}
\end{equation}
is a countable (finite or infinite) subset of $\cS$.
By \eqref{bmux} and \eqref{atoms},
if $x\notin\cA$, then $\P(x\in Y)=0$.
Hence, in the construction of $\GGm$, 
if an edge $Y_i$ has a vertex $x\notin \cA$, then
\as{} $x\notin Y_j$ for every $j\neq i$. Consequently, a vertex $x\notin\cA$
of $\GGq_\infty$
\as{} appears in only one edge. 
(Such a vertex is called a \emph{blip} in \cite{CraneD-relational}.)
On the other hand, if $x\in\cA$, so
$\P(x\in Y)=\bmu\pset x>0$, then by the law of large numbers, \as{} $x$
belongs to infinitely many edges $Y_i$ of $\GGq_\infty$.

It follows that when constructing the hypergraphs $\GGm$, 
if the edge $Y_i=\set{y_{i1},\dots,y_{in_i}}$,
we do not have
to keep track of the vertex labels $y_{ij}$ unless they belong to $\cA$;
any $y_{ij}\notin\cA$ will be a blip not contained in any other edge and the
actual value of $y_{ij}$ may be forgotten.
(Except that if we allow repeated vertices in the edges, 
see \refR{Rmulti}, then we still have to
know whether two vertex labels $y_{ij}$ and $y_{ik}$ on the same edge
are the same or not.)

Now, enumerate $\cA$ as $\set{a_i}_{i=1}^N$, where
$N\le\infty$, and replace, for every 
multiset $Y=(y_1,\dots,y_\ell)\in\csx$,  every vertex label $y_{j}=a_k$ for some
$a_k\in\cA$ by the new label $y'_j=k$, and 
the vertex labels $y_{j}\notin\cA$ on $Y$ by $0, -1,
\dots$.
(For definiteness,
we may assume that $\cS\subseteq\oi$ so $\cS$ is ordered, and take the
labels in order in case $Y$ has more than one vertex label not in $\cA$.)
This maps $\mu$ to a probability measure $\mu'$
on the set $\bbZ\qx$ of finite 
multisets of integers, and it follows from the discussion above that we can
recover the random hypergraphs $\GGm$ from $\mu'$ by 
the construction in \refD{D1}, if we
first replace each
vertex label $y_j'\in\set{0,-1,\dots}$ by a random label with a continuous
distribution in some set, for example $U(0,1)$, making independent choices
for each $Y_i$.
Equivalently, and more directly, we obtain $\GGm$ from the probability
measure $\mu'$ on $\bbZ\qx$ by 
the following construction,
which is the original definition by \citet{CraneD-edge,CraneD-relational}.
\begin{definition}[\citet{CraneD-edge,CraneD-relational}] \label{DCD}
  Given a probability measure $\mu$  on $\bbZ\qx$,  
we define a sequence of finite random (multi)hypergraphs $(\GGm)\mmm$
as in \refD{D1} with the modification that in every edge
$Y_i=\set{y_{i1},\dots,y_{i\ell_i}}$ we replace every vertex label $y_{ij}\le0$
(if any)
with a new vertex that is not used for any other edge.
\end{definition}

Since we ignore the vertex labels in $\GGm$, 
it does not matter what labels we use as replacements for $0,-1,\dots$
in \refD{DCD}.
\citet{CraneD-edge,CraneD-relational}
use the same set $0,-1,\dots$ of integers, taking the first label
not already used. An alternative is to take random labels, \eg{} \iid{}
$U(0,1)$ as above.

\begin{remark}\label{R00}
  To be precise, \refD{DCD} is the definition
in \cite{CraneD-relational}.
The definition in \citet{CraneD-edge} treats only the binary case $|Y_n|=2$
in detail; and differs in that only labels $y_i\ge0$ are
used, and that an edge $\set{0,0}$ is replaced by an edge $\set{z_1,z_2}$ with
two new vertex labels $z_1$ and $z_2$. 

This version is essentially equivalent;
apart from a minor notational difference, the only difference is that this
version does not allow for ``loop dust'',
where a positive fraction
of the edges are isolated loops.
Cf.~\refR{R00b}.
\end{remark}

We have shown that \refD{D1} is essentially equivalent to 
 the original definitions by \citet{CraneD-edge,CraneD-relational}.
One advantage of \refD{D1} is that no special treatment of vertex labels
$\le0$ is needed; the blips (if there are any)
come automatically from
the continuous part of the
label distribution; a disadvantage is that this continuous part is arbitrary
and thus does not contain any information.
Another advantage with \refD{D1} is that it allows for arbitrary Borel
spaces $\cS$; even if it usually is convenient to use $\cS=\bbN$ to label
the vertices, it may in some examples be natural to use another set $\cS$.

\begin{remark}\label{Rcai}
The construction in \citet{CampbellCaiBroderick} is stated differently, but is
equivalent. It uses a generalization of Kingman's paintbox construction  
of exchangeable partitions; in the version in \citet{CampbellCaiBroderick},
the paintbox consists of families $(C_{kj})_{k,j\ge1}$ and 
$(C'_{jl})_{j,l\ge1}$ of subsets of $\oi$; it is assumed that every $x\in\oi$
is an element of only finitely many of these sets, and that for each $j$ and
$k\neq l$,
$C_{jk}\cap C_{jl}=\emptyset$ and $C'_{jk}\cap C'_{jl}=\emptyset$.
(In general these sets may be random, but similarly as above, in the
construction we condition on these sets so we may assume that they are
deterministic.) Furthermore, we generate \iid{} $U(0,1)$ random labels
$\phi_k$ and $\phi_{Njl}$ for $k,N,j,l\ge1$. For each $N\ge1$ we construct a
edge $Y_N$ 
by taking a uniformly random point $V_N\in\oi$, independent of
everything else; then, 
for each $(j,k)$ such that $V_N\in C_{jk}$,
$Y_N$ contains
$k$ vertices labelled $\phi_j$,
and
for each
$(j,k)$ such that $V_N\in C'_{jk}$ and  every $l\le k$, $Y_N$ contains
$j$ vertices labelled $\phi_{Njl}$.
(The latter vertices are thus blips.)

Note that this gives the vertices random labels as in \refR{RdeFinetti};
however, we then ignore the vertex labels.
(Actually, in \cite{CampbellCaiBroderick}, each vertex is represented by a
multiset of edge labels (called a \emph{trait}), which contains the label of
each edge that contains the vertex, repeated as many times as the vertex occurs 
in the edge. This is obviously an equivalent way to describe the hypergraph.)

It is obvious that, conditioned on the labels $\phi_k$ and $\phi_{Njl}$,
this construction 
gives a  random multiset with some distribution $\mu$; conversely, every
distribution $\mu$ of a random (finite) multiset can easily be obtained in
this way 
by suitable choices of $C_{jk}$ and $C'_{jk}$. Hence, the construction is
equivalent to the one above. (In our opinion, it is more natural to focus on
the distribution of the edges, since the sets $C_{jk}$ and $C'_{jk}$ in
the paintbox construction have no intrinsic meaning; they are just used to
describe the edge distribution.)
\end{remark}

\subsection{The Poisson version}\label{SSdefPo}

The multihypergraph $\GGm$ has exactly $m$ edges (not necessarily
distinct). It is often convenient to instead consider a Poisson number.
(This was done by \citeauthor{BroderickCai} 
in \cite[Example 2.7]{BroderickCai}.)
It is then natural to consider a continuous-parameter family of hypergraphs,
which we define as follows. We may think of the second coordinate $t$ as time.

\begin{definition}\label{D2}
  Given a probability measure $\mu$  on $\csx$,  
we define a family of random (multi)hypergraphs $(\GGt)_{t\ge0}$ as
follows.
Consider a Poisson point process $\Xi$
on $\csx\times\ooo$ with intensity $\mu\times\dd t$;
then $\Xi$ is a random countably infinite set of points that can be enumerated
 as $\Xi=\set{(Y_i,\tau_i):i\ge1}$
for some $Y_i\in\csx$ and $\tau_i\in\ooo$.
Let, for $0\le t\le\infty$,
the hypergraph $\GG_t$ be the multihypergraph with edges
$E(\GG_t):=\set{Y_i:\tau_i\le t}$, and vertex set $V(\GG_t):=\bigcup_{Y\in
  E(\GG_t)} Y$, \ie, the vertex set spanned by the 
edges. 

 Define $\G_t$ as the simple hypergraph obtained by merging
each set of parallel edges in $\GG_t$ to a single edge, and deleting loops
(together with their incident vertices, unless these also belong to some
non-loop).

Note  that  the random hypergraphs $\GG_t$
and $\G_t$ are 
\as{} finite for every $t<\infty$.
\end{definition}

The projection $\Xi'':=\set{\tau_i}\iii$ of the Poisson process $\Xi$ to the
second coordinate is a Poisson point process on $\ooo$ with intensity 1, and we
may and will
assume that the points of $\Xi$ are enumerated 
with $\tau_i$ in increasing order; thus \as{}
$0<\tau_1<\tau_2<\dots$.
Let $N(t)$ be the number of points of $\Xi$ in $\csx\times[0,t]$, \ie{}
\begin{equation}
N(t):=\bigabs{\Xi\cap(\csx\times[0,t])}=\max\set{i:\tau_i\le t};  
\end{equation}
this is a Poisson
counting process on $\ooo$ and $N(t)\sim\Po(t)$. Conversely, $\tau_m$ is the
time the process $N(t)$ reaches $m$, so the increments $\tau_m-\tau_{m-1}$
(with $\tau_0:=0$) are \iid{} and $\Exp(1)$, and $\tau_m$ has the Gamma
distribution $\Gamma(m)$.
Moreover, the random multisets $Y_i$ are \iid{} with distribution $\mu$ and
independent of $\set{\tau_i}$, so they can be taken as the $Y_i$ in
\refD{D1}, which leads to the following simple relation between the two
definitions. 

\begin{proposition}\label{Pt}
If $\mu$ is a probability measure on $\csx$, then
  the random hypergraphs constructed in Definitions \ref{D1} and \ref{D2}
are related by
$\GGm=\GG_{\tau_m}$ and thus
$\Gm=\G_{\tau_m}$,
and conversely\/
$\GG_t=\GGq_{N(t)}$ and\/ $\G_t=\Gq_{N(t)}$.
\qed
\end{proposition}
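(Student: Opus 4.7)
The plan is to unpack the standard marking decomposition of the Poisson process $\Xi$ on $\csx\times\ooo$ already set up in the paragraph preceding the proposition, and observe that it turns \refD{D2} into \refD{D1} on the nose.

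First, I would note that since the intensity of $\Xi$ is the product measure $\mu\times\dd t$ with $\mu$ a probability measure, the projection $\Xi''=\set{\tau_i}\iii$ is a rate-$1$ Poisson process on $\ooo$, and by the marking theorem, conditional on $\Xi''$ the marks $Y_i\in\csx$ are \iid{} with distribution $\mu$ and in fact independent of $\Xi''$. So, enumerating the points of $\Xi$ in increasing order of $\tau_i$ (which is \as{} well-defined since \as{} all $\tau_i$ are distinct), the sequence $(Y_i)\iii$ is \iid{} with distribution $\mu$ and independent of $(\tau_i)\iii$. We can therefore use this same sequence $(Y_i)$ as the \iid{} input in \refD{D1}, thereby coupling the two constructions on one probability space.

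Second, under this coupling the identification of edges is immediate. By \refD{D2}, $E(\GG_t)=\set{Y_i:\tau_i\le t}$; since the $\tau_i$ are increasing, this equals $\set{Y_1,\dots,Y_{N(t)}}$, which by \refD{D1} is exactly $E(\GGq_{N(t)})$. The vertex sets agree because both are defined as the union of the edges (no isolated vertices). Substituting $t=\tau_m$ gives $N(\tau_m)=m$ and hence $\GG_{\tau_m}=\GGq_m$; substituting a general $t$ gives the converse $\GG_t=\GGq_{N(t)}$. Finally, the simple versions $\Gm$ and $\G_t$ are obtained from $\GGm$ and $\GG_t$ by exactly the same deterministic operation (merging parallel edges, deleting loops), so the two identities for simple hypergraphs follow at once from the corresponding identities for multihypergraphs.

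There is no real obstacle: the content of the proposition is just that the two constructions share the same \iid{} edge sequence once one conditions on the Poisson arrival times. The only point worth making explicit is the independence of the marks from the arrival times, which is why \refD{D1} applied to the reordered $(Y_i)$ produces a genuine copy of the fixed-$m$ model and not merely something with the same marginal law. Once this is said, everything else is bookkeeping and the proposition may reasonably be stated with only a \texttt{\textbackslash qed}.
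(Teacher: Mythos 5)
Your proposal is correct and follows essentially the same route as the paper, which proves the proposition in the paragraph immediately preceding it: the projection of $\Xi$ onto the time coordinate is a rate-$1$ Poisson process, the marks $Y_i$ (ordered by increasing $\tau_i$) are \iid{} with distribution $\mu$ and independent of the times, so they can be taken as the $Y_i$ of Definition~\ref{D1}, after which the identities are bookkeeping. The only stylistic difference is that you spell out the mark--time independence and the vertex-set/simple-graph checks explicitly, which the paper leaves implicit behind the \qed.
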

Although we usually tacitly consider $t<\infty$, we may here also take
$t=\infty$: $\GGq_\infty=\GG_\infty$ and $\Gq_\infty=\G_\infty$.

Note that the relations in \refP{Pt}
hold not just for a single $m$ or $t$, but also
for the entire processes. Hence, asymptotic results, and in particular \as{}
limit results, are (typically)
easily transfered
from one setting to the other.

\begin{remark}\label{Rcond}
  Instead of stopping at the random time $\tau_m$, we can also obtain
  $\GGm$ and $\Gm$ from $\GG_t$ and $\G_t$ 
by conditioning on $N(t)=m$, for any fixed $t>0$.
\end{remark}

\begin{remark}\label{Rpo}
  One reason that the Poisson version is convenient is that different edges
  appear independently of each other.
If we for convenience assume that there are no blips, we may as explained
above assume that $\cS=\bbN$, so $V(\GG_t)\subseteq\bbN$.
In this case, the number of copies of an edge $I\in\csx$ in 
$\GG_t$ has the
Poisson distribution
$\Po(t\mu\pset{I})$,
and these numbers are independent for different $I\in\csx$.
(In the case $\mu$ is random, this holds conditionally on $\mu$, but not
unconditionally.) 
\end{remark}

\subsection{Unnormalized measures}\label{SSmufinite}

We have so far assumed that $\mu$ is a probability measure.
This is very natural, but we can  make a trivial
extension to arbitrary finite measures. This will not produce any new random
hypergraphs but
it is  convenient; for example, it means that we do not have  to normalize
the measure in the examples in later sections.

When necessary, we denote the measure used in the construction of our random
hypergraphs by a subscript; we may thus write \eg{} $\Gq_{m,\mu}$.

\begin{definition}\label{Dfinite}
  Let $\mu$ be a finite measure on $\csx$,
not identically zero.
Let $\mu_0$ be the probability measure $\mu_0:=\norm{\mu}\qw\mu$, 
and define $\GGq_{m,\mu}:=\GGq_{m,\mu_0}$.
Furthermore, define $\GG_{t,\mu}$ as in \refD{D2}.
Let, as usual, $\Gq_{m,\mu}$ and $\G_{t,\mu}$ be the corresponding simple
graphs. 
\end{definition}

Thus, $\mu=c\mu_0$, 
where $c:=\norm{\mu}=\mu(\csx)$.
It is  obvious that, using obvious notation,
the Poisson process $\Xi_\mu$ can be obtained from $\Xi_{\mu_0}$ by
rescaling the time: If $\Xi_{\mu_0}=\set{(Y_i,\tau^0_i)}$, we can
take
$\Xi_\mu=\set{(Y_i,c\qw \tau^0_i)}$, and thus
$\GGq_{t,\mu}=\GGq_{ct,\mu_0}$.
Hence, the random hypergraph process defined by $\mu$ is the same as for
$\mu_0$, except for a simple deterministic change of time.
This implies the following result.

\begin{proposition}\label{Ptfin}
  \refP{Pt} extends to arbitrary finite measures $\mu$ (not identically
  zero), with
stopping times $\tau_m$ that are the partial sums $\sum_{i=1}^mT_i$
of \iid{} random variables
$T_i\sim\Exp(\norm\mu)$. 
\end{proposition}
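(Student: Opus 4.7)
The plan is to deduce \refP{Ptfin} from \refP{Pt} via the deterministic time-change that relates the Poisson construction for $\mu$ to the one for the normalized probability measure $\mu_0:=\norm{\mu}\qw\mu$. The key identity, already noted in the paragraph preceding the proposition, is
\begin{equation*}
\GG_{t,\mu}=\GG_{ct,\mu_0},\qquad c:=\norm\mu,
\end{equation*}
obtained by observing that if $\Xi_{\mu_0}=\{(Y_i,\tau^0_i)\}$ is a Poisson process on $\csx\times\ooo$ with intensity $\mu_0\times\dd t$, then $\{(Y_i,c\qw\tau^0_i)\}$ has intensity $\mu\times\dd t$ and may be taken as $\Xi_\mu$. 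This yields the analogous identity $\G_{t,\mu}=\G_{ct,\mu_0}$ for the simple hypergraphs, since passing to the simple hypergraph (merging parallel edges, deleting loops) depends only on the edge set, not on the labels $\tau_i$.

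Next I would apply \refP{Pt} to the probability measure $\mu_0$: we have $\GGq_{m,\mu_0}=\GG_{\tau^0_m,\mu_0}$ where $\tau^0_m$ are the ordered arrival times of a rate-$1$ Poisson process, so $\tau^0_m-\tau^0_{m-1}$ are \iid{} $\Exp(1)$. Combining this with the time-change and \refD{Dfinite}:
\begin{equation*}
\GGq_{m,\mu}=\GGq_{m,\mu_0}=\GG_{\tau^0_m,\mu_0}=\GG_{\tau^0_m/c,\mu}=\GG_{\tau_m,\mu},
\end{equation*}
where we set $\tau_m:=\tau^0_m/c$. The increments $T_i:=\tau_i-\tau_{i-1}=c\qw(\tau^0_i-\tau^0_{i-1})$ are then \iid{}; since $T^0_i\sim\Exp(1)$ has mean $1$, $T_i$ has mean $1/c=1/\norm\mu$, \ie{} $T_i\sim\Exp(\norm\mu)$ in the rate convention of the paper. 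The same identity with $\G$ in place of $\GG$ follows exactly as before.

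For the converse direction $\GG_{t,\mu}=\GGq_{N(t),\mu}$, let $N_0(s):=|\Xi_{\mu_0}\cap(\csx\times[0,s])|$ be the rate-$1$ counting process from the proof of \refP{Pt}, and define $N(t):=N_0(ct)$; this is the counting process of the $\tau_m=\tau^0_m/c$, hence a rate-$c$ Poisson process, and $N(t)\sim\Po(\norm\mu\,t)$. Using the time-change once more,
\begin{equation*}
\GG_{t,\mu}=\GG_{ct,\mu_0}=\GGq_{N_0(ct),\mu_0}=\GGq_{N(t),\mu_0}=\GGq_{N(t),\mu},
\end{equation*}
and likewise $\G_{t,\mu}=\Gq_{N(t),\mu}$.

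There is no real obstacle here: everything reduces to the deterministic rescaling $t\mapsto ct$ of the time axis plus the standard fact that scaling a rate-$1$ exponential by $c\qw$ produces a rate-$c$ exponential. The only thing one should be mildly careful about is checking that the reduction to simple hypergraphs commutes with the time-change, which is immediate since the time labels play no role in the merging operation.
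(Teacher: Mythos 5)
Your proposal is correct and follows essentially the same route as the paper: the paper's justification is precisely the observation, stated in the paragraph preceding the proposition, that $\Xi_\mu$ is obtained from $\Xi_{\mu_0}$ by the deterministic time rescaling $t\mapsto ct$ with $c=\norm\mu$, after which \refP{Pt} applied to $\mu_0$ gives the stopping times with \iid{} $\Exp(\norm\mu)$ increments. Your additional checks (that merging parallel edges commutes with the time change, and the converse identification via $N(t)=N_0(ct)$) are correct and merely make explicit what the paper leaves implicit.
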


In particular, the law of large numbers yields, as \mtoo,
\begin{equation}\label{taulim}
  \tau_m/m\asto \norm\mu\qw.
\end{equation}

\begin{remark}
  \refD{D2} can be employed also when $\mu$ is an infinite, \gsf{} measure.
In this case, $\GG_t$ has \as{} an infinite number of edges for every $t>0$.
We will not consider this case further.
\end{remark}

\section{Random graphs}\label{Sgraphs}

From now on, we consider the graph case, where $\mu$ is a finite measure on 
$\csxx2=\set{\set{x,y}:x,y\in\cS}$. This allows for the presence of loops;
often we consider $\mu$ supported on $\cszz2=\set{\set{x,y}:x\neq y}$,
and then there are no loops.

As explained in \refS{Sdef}, in particular \refD{DCD},
if there are no blips (\ie, if the intensity
$\bmu$ is discrete), we may without loss of generality assume that
$\cS=\bbN$, and if there are blips, we may assume that $\cS=\bbNow$ with the
special convention that 0 and $-1$ are interpreted as blips.
Unless stated otherwise, we use this version, and we then write
$\mu_{ij}$ for $\mu(\set{i,j})$;
we say that $\mu_{ij}$ is the \emph{intensity} of edges $ij$.
Thus,
$(\mu_{ij})$ is an infinite symmetric matrix of non-negative numbers,
with indices in $\bbNow$ (or in $\bbN$ if there are no blips); note that,
because we consider undirected edges, the total mass of $\mu$ is
\begin{equation}
  \norm{\mu}=\frac12\sum_{i,j:\;i\neq j}\mu_{ij}+\sum_i \mu_{ii}.
\end{equation}

We assume that $0<\norm\mu<\infty$, or equivalently that
$\sum_{i,j}\mu_{ij}$ is finite (and non-zero),
but we do not insist on $\mu$ being a probability measure.
As described in \refSS{SSmufinite}, we can always normalize $\mu$ to 
the probability measure $\norm{\mu}\qw\mu$ when desired.

We also define (for $i\ge1$) 
\begin{equation}\label{mui}
  \mu_i:=\sum_j\mu_{ij},
\end{equation}
this is the total intensity of edges adjacent to vertex $i$.

\begin{remark}\label{Rloop}
  The diagonal terms $\mu_{ii}$ correspond to loops.
Loops appear naturally in some examples, see \eg{} \refE{Erank1} below,
but we are often interested in examples without loops, and then take 
$\mu_{ii}=0$. Moreover, in the construction of the simple graphs $\Gm$ and
$\Gt$ we delete loops, so it is convenient to take $\mu_{ii}=0$ and avoid
loops completely.
Note that deleting all loops from $\GGt$ is equivalent to conditioning
$\GGt$ on containing no loops; this is also equivalent to changing every
$\mu_{ii}$ to 0. (For $\Gm$ this is not quite true, since the number of
non-loop edges may change; however, the difference is minor.)

Note also the in the construction leading to \refD{DCD}, in the graph case,
vertex label $-1$ is used only for the edge $\set{0,-1}$, so we may (and
will) assume that $\mu_{i,-1}=0$ unless $i=0$.
\end{remark}

Suppose now that we are given such a matrix $(\mu\xij)_{i,j\ge-1}$.
We can decompose the matrix into the three parts
$(\mu\xij)_{i,j\ge1}$,
$(\mu_{i0})_{i\ge1}$, 
$(\mu_{i0})_{i\in\set{0,-1}}$,
which by the construction and properties of Poisson processes 
correspond to a decomposition of
the Poissonian multigraph $\GG_t$ as a union of 
three parts,
which are
independent random graphs:
{\addtolength{\leftmargini}{-16pt}
\begin{description}
\item [Central part] 
The edges $ij\in\GG_t$ with $i,j\in\bbN$.
\item [Attached stars]For each $i\ge1$ a star with $\Po(t\mu_{i0})$ edges
  centred at $i$. 
\item [Dust]$\Po(t\mu_{00})$ isolated loops and 
$\Po(t\mu_{0,-1})$ isolated edges.
\end{description}}
Moreover, the Poisson random variables above, for different $i$ and for the
two types of dust, are independent. 
The vertex set is by definition the set of
endpoints of the edges, so there are no isolated vertices. The edges and
loops in the 
dust are always isolated, 
\ie{} with endpoints that are blips (have no other edges). 
Similarly, the peripheral vertices in the attached stars are blips without other
edges, while the central vertex $i$ may, or may not, also belong to the
central part.

Note that multiple edges only occur in the central part.

\begin{remark}\label{R00b}
  We have here discussed the model in full generality,
but it is obvious that the main interest is in the central part, and 
all our examples will be with $\mu$ supported on $\bbN\times\bbN$, \ie,
without dust and attached stars. (Of course, there may be other stars or
isolated edges, created in the central part.)

In particular, the dust part is quite trivial, and the dust loops are even less
interesting than the dust edges. In a case with dust but no loops in the dust, 
it is convenient to relabel $\mu_{0,-1}$ as
$\mu_{00}$, so $\mu$ is a symmetric matrix with index set  $\bbNo$;
this corresponds to using the version of the definition in
\cite{CraneD-edge},
see \refR{R00}.
\end{remark}

\subsection{A comparison with vertex exchangeable graphs}
\label{SSvex}
  Consider the case without dust, attached stars and loops, so $\mu$ is
  supported on 
  $\bbN\times\bbN$, with $\mu_{ii}=0$.
Then $\Gt$ has $\Po(t\mu_{ij})$ edges $ij$, for every pair of distinct
integers $i,j\in \bbN$.

We may compare this to the vertex exchangeable random graphs  studied by 
\eg{} \citet{BCLSV1,BCLSV2}, \citet{SJ209}, \citet{Lovasz} 
and their generalizations by \citet{CaronFox},
	\citet{VR}
and
\citet{BCCH16},
see also \citet{OR},
\citet{HSM15},
and \citet{SJ311}.

In the classical case \cite{BCLSV1,BCLSV2,Lovasz},
with a standard graphon $W$ defined on a probability
space $(\gO,\nu)$, the vertex exchangeable
random graph $G(n,W)$ has a given number $n$ of vertices
and is constructed by giving each vertex $i$ a random ``type'' $x_i$
in $\gO$ with distribution $\nu$, independently of all other types.
Then, conditionally on the types, an edge $ij$ is added with probability
$W(x_i,x_j)$, independently for each pair $i,j$ of distinct vertices. 
The generalization to graphons
on $\bbRp$ or another $\gs$-finite measure space $(\gO,\nu)$
\cite{CaronFox,VR,BCCH16} is similar, \cf{} \refS{Sgraphex}: 
then $\gO$ again can be regarded as a
space of types, and the random graph, here denoted $\G(t,W)$, 
has a random (and generally infinite) set of vertices with types that
are given by a Poisson point process on $\gO$ with intensity $t\nu$.
Then, conditionally on the set of vertices and their types, edges are added
as in the classical case. (Finally, we usually delete all isolated vertices, so
that the result is a graph without isolated vertices as in the construction
in \refS{Sdef} above.)
In both cases, a natural multigraph version is to instead add a Poisson
number  $\Po(W(x_i,x_j))$ of copies of the edge $ij$. 
(Cf.\ \eg{} \cite[Remark 2.4]{SJ178}. Note that if the
$W(x_i,x_j)$ are small, then the standard (Bernoulli) and the Poisson
versions are almost the same.)

The Poisson versions of the edge exchangeable and vertex exchangeable random
graphs thus add edges in the same way, if we condition on the types of the
vertices in the latter and let $\mu_{ij}=t\qw W(x_i,x_j)$.
However, the vertices are constructed in very different ways.
To see the similarities and differences clearly, consider the case where the
type space $\gO=\bbN$, with some (finite or infinite) measure $\nu$.
Then the vertex exchangeable
$\G(t,W)$ has a Poisson number $\Po(t\nu\set{i})$ of vertices of type
$i$, for each $i\in\bbN$, 
while the edge exchangeable $\Gt$ has at most 
one vertex $i$ for each $i\in \bbN$.
(We can reformulate the construction of $\Gt$ and say that we start with
exactly one vertex of type $i$ for every $i\in\bbN$,
and then remove all isolated vertices after having added edges.)  

Moreover, although for a fixed $t$, each pair of distinct vertices of types $i$
and $j$ has $\Po(W(i,j))$ edges between them in $\G(t,W)$ and
$\Po(t\mu_{ij})$ edges in $\Gt$, which coincide if $W(i,j)=t\mu_{ij}$, we
see that if we keep $W$ and $\mu$ fixed and increase $t$, the two families
$\G(t,W)$ and $\Gt$ behave differently:
In $\Gt$ the number of edges between each pair of vertices increases
linearly as $t$ increases, the number of vertices increases more slowly
(by \refC{Cot} below; recall that we only keep vertices with at least one
edge),
and there is at most one vertex of each type.
In $\G(t,W)$, the number of vertices of each type increases linearly, while
the number of edges between each pair of vertices remains the same. 

 
\section{Numbers of vertices and edges}\label{Snumber}
By construction, the number of edges is $m$ in the multigraph $\GGm$ and
random $\Po(t\norm{\mu})$ in the multigraph $\GGt$.
The numbers of vertices in the graphs 
and the numbers of edges in the simple graphs $\Gm$
and $\Gt$ are somewhat less immediate, and are studied in this section.

We use the notation of \refS{Sgraphs}, and assume that we are given
a (deterministic) matrix $\mu=(\mu\xij)$ of intensities.
Moreover, for simplicity we assume that 
$\mu$ is concentrated on $\bbN\times \bbN$, so there are no attached stars
and no dust, and that
$\mu_{ii}=0$ for every $i$, so there
are no loops.
We consider briefly the case with dust or attached stars in \refSS{SSdust}.

Note that $\Gm$ 
is a simple graph without isolated vertices,
and thus
\begin{equation}
\label{vev}
  \tfrac12 v(\Gm)\le e(\Gm)\le \binom{v(\Gm)}2\le \tfrac12 v(\Gm)^2.
\end{equation}
Recall that $\Gm$ is \emph{dense} if $e(\Gm)\asymp v(\Gm)^2$,
\emph{sparse} if $e(\Gm)=o(v(\Gm)^2)$,
and \emph{extremely sparse} if $e(\Gm)\asymp v(\Gm)$ as \mtoo.
By Propositions \ref{Pt} and \ref{Ptfin}, these are equivalent to the
corresponding conditions for $\Gt$. 

The number of edges in $\Gm$ is the number of different values
taken by the \iid{} sequence
$Y_1,\dots,Y_m$. Equivalently, it is the number of occupied bins if $m$
balls are thrown independently into an infinite number of boxes, with the
probability $\mu_{ij}$ (normalized if necessary) for box $\set{i,j}$.
Such numbers have been studied in, for example,
\cite{Darling67, Karlin,KestenMR,Dutko,SJ189}, 
where central limit theorems have been proved under various
assumptions, see  \refT{Teve} below.
These results are often proved using Poissonization, which in our setting is
equivalent to considering $\Gt$ instead of $\Gm$.
We too find it convenient to first study the Poisson version.

The Poisson model  is convenient because, as said before, 
edges $ij$ arrive according to a Poisson
process with intensity $\mu_{ij}$ and these
Poisson processes are independent for different pairs \set{i,j}.
Let  $N_{ij}(t)$ be the number of copies of the edge $ij$ in $\GGt$, 
and let $N_i(t)$ be the degree of vertex $i$ in $\GGt$.
Then
\begin{equation}\label{Nij}
  N_{ij}(t)\sim\Po\bigpar{t\mu_{ij}},
\end{equation}
and, recalling \eqref{mui},
\begin{equation}\label{Ni}
  N_i(t)=\sum_{j\neq i} N_{ij}(t)\sim\Po\bigpar{t\mu_i}.
\end{equation}
Moreover,
let
$T_i\sim \Exp(\mu_i)$ and $T\xij\sim \Exp(\mu\xij)$ be the random times that
the first edge at $i$ and the first edge $ij$ appear, respectively.
Thus,
$N_i(t)\ge1\iff T_i\le t$
and $N\xij(t)\ge1\iff T\xij\le t$.

By the construction of $\Gt$,
\begin{align}
  v(\Gt)=v(\GGt) &= \sum_i \indic{N_i(t)\ge1} 
=\sum_i \indic{T_i\le t},\label{vGt}\\
  e(\Gt) &= \sum_{i<j} \indic{N_{ij}(t)\ge1}
=\sum_{i<j}\indic{T\xij\le t}.\label{eGt}
\end{align}

Recall that for every fixed $t$, the numbers $N_{ij}(t)$ are independent
random variables, and thus the indicators 
in the sums \eqref{eGt} are independent.
However, the numbers $N_i(t)$ and the indicators in the sums in \eqref{vGt}
are dependent, 
which is a complication. (For example, $v(\Gt)=1$ is impossible, since there
are no isolated vertices and no loops.)

We give first a simple lemma for the type of sums in \eqref{eGt}, where the
terms are independent.

\begin{lemma}\label{LW}
  Let $Z_i\sim\Exp(\gl_i)$, $i=1,2,\dots$, be independent exponential random
  variables with $\gl_i\ge0$ and
$0<\sumi \gl_i<\infty$,
and let $W(t):=\sumi\indic{Z_i\le t}$.
Then, the following hold.
\begin{romenumerate}
\item \label{LWa}
For every $t\ge0$,
\begin{equation}\label{EW}
  \E W(t) = \sumi\P\bigpar{Z_i\le t}
=\sumi \bigpar{1-e^{-\gl_i t}}<\infty
\end{equation}
and thus \as{} $W(t)<\infty$ for every $t\ge0$.
Furthermore,
$\E W(t)$ is a 
 strictly increasing and concave
continuous function of $t\ge0$ with $\E W(0)=0$
and $\E W(t)/t\to0$ as \ttoo.
\item \label{LWasymp}
For $t>0$,
\begin{equation}\label{lwasymp}
  \E W(t)\asymp \sumi \bigpar{1\land(\gl_i t)}.
\end{equation}

\item \label{LWvar}
For every $t\ge0$,
\begin{equation}\label{lwvar}
  \Var\bigpar{W(t)}
=\sumi e^{-\gl_i t} \bigpar{1-e^{-\gl_i t}}
\le\E W(t).
\end{equation}

\item \label{LWlim}
Let $L:=|\set{i:\gl_i>0}|\le\infty$.
Then
as \ttoo, $\E W(t)\to L$,
$W(t)\asto L$ and 
\begin{equation}\label{lwlim}
  \frac{W(t)}{\E W(t)}\asto 1.
\end{equation}
\item \label{LW'}
If $(t_n)$ and $(t'_n)$ are two sequences of positive numbers with
$t_n'/t_n\to1$, then $\E W(t_n')/\E W(t_n)\to1$.
\end{romenumerate}
\end{lemma}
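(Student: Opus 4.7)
The plan is to treat the five parts largely independently, since they have different flavors. For part (i), I would first apply monotone convergence (or equivalently Fubini) to write $\E W(t) = \sumi \P(Z_i \le t) = \sumi (1 - e^{-\gl_i t})$; the bound $1 - e^{-x} \le x$ then gives $\E W(t) \le t \sumi \gl_i < \infty$, which also forces $W(t) < \infty$ \as{} Each summand $t \mapsto 1 - e^{-\gl_i t}$ is continuous, concave, and strictly increasing whenever $\gl_i > 0$ (and at least one $\gl_i$ is positive by hypothesis), and the series converges uniformly on any compact interval because on $[0, T]$ the terms are dominated by the summable sequence $\gl_i T$; this uniform convergence preserves continuity, concavity, and (strict) monotonicity in the limit. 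For $\E W(t)/t \to 0$, I would use dominated convergence in the series $\sumi (1 - e^{-\gl_i t})/t$, each term of which is dominated by $\gl_i$ and tends pointwise to $0$ as $\ttoo$.

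Part (ii) reduces to the elementary two-sided bound $(1 - e^{-1})(x \land 1) \le 1 - e^{-x} \le x \land 1$ for $x \ge 0$ (split at $x = 1$ and use $1 - e^{-x} \le x$ below and $1 - e^{-x} \ge 1 - e^{-1}$ above), applied termwise. Part (iii) is the standard variance formula for a sum of independent Bernoulli indicators with $p_i = 1 - e^{-\gl_i t}$, giving $\Var W(t) = \sumi p_i(1 - p_i)$, and $p_i(1-p_i) \le p_i$ yields the bound by $\E W(t)$.

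Part (iv) is where the real work lies. Monotone convergence for the sum handles $\E W(t) \to L$ easily, since $1 - e^{-\gl_i t} \upto \indic{\gl_i > 0}$. For the a.s. assertion, the case $L < \infty$ is immediate since $W(t)$ is non-decreasing and bounded by $L$ with $W(t) \ge |\{i : \gl_i > 0, Z_i \le t\}| \to L$ a.s. by countable a.s.\ finiteness of each $Z_i$. For $L = \infty$, the hard part is upgrading the $L^2$ convergence coming from $\Var(W(t)/\E W(t)) \le 1/\E W(t) \to 0$ to almost sure convergence; my plan is the standard sandwich trick: using continuity and strict monotonicity of $\E W$ from (i), pick times $s_n$ with $\E W(s_n) = n^2$, apply Chebyshev and Borel--Cantelli along this subsequence to get $W(s_n)/\E W(s_n) \asto 1$, and then for general $t \in [s_n, s_{n+1}]$ sandwich $W(s_n)/\E W(s_{n+1}) \le W(t)/\E W(t) \le W(s_{n+1})/\E W(s_n)$, noting $\E W(s_{n+1})/\E W(s_n) = (n+1)^2/n^2 \to 1$.

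Part (v) I would deduce cleanly from the concavity and $\E W(0) = 0$ proved in (i): these imply that $t \mapsto \E W(t)/t$ is non-increasing on $(0,\infty)$, so writing $t'_n = r_n t_n$ with $r_n \to 1$, we get $\E W(t'_n)/\E W(t_n)$ squeezed between $\min(1, r_n)$ and $\max(1, r_n)$, both of which tend to $1$. The main obstacle is really the a.s.\ limit in (iv) when $L = \infty$; everything else is a direct calculation or an appeal to standard inequalities.
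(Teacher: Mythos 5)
Your proposal is correct and follows essentially the same route as the paper's proof: termwise computation and uniform/dominated convergence for (i), the elementary bound $1-e^{-x}\asymp 1\land x$ for (ii), the Bernoulli variance formula for (iii), Chebyshev plus Borel--Cantelli along a deterministic subsequence followed by a monotonicity sandwich for (iv), and monotonicity of $\E W(t)/t$ from concavity for (v). The only cosmetic difference is your choice of subsequence $\E W(s_n)=n^2$ in (iv) instead of the paper's geometric levels $(1+\gd)^n$; yours avoids the final limit in $\gd$ since $(n+1)^2/n^2\to1$ directly, and both are equally valid.
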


\begin{proof}
This is presumably all known, 
but it seems easier to give a proof than to find
references.
Note that $W(t)$ is increasing as a function of $t$.

\pfitemref{LWa}
The calculation \eqref{EW} of the expectation is immediate, and the sum is
finite because
$1-e^{-\gl_i t}\le \gl_i t$.
Hence $W(t)$ is \as{} finite for, say, each integer $t$, and thus for all
$t\ge0$.

It follows by \eqref{EW} that $\E W(t)$ is strictly increasing and concave.
Moreover, the sum converges uniformly on every finite interval $[0,T]$,
and thus $\E W(t)$ is continuous.
Finally, $\E W(t)/t=\sumi(1-e^{-\gl_i t})/t$, where each summand tends to 0
as \ttoo, and is bounded by $\gl_i$. Hence $\E W(t)/t\to0$ as \ttoo{} by
dominated convergence of the sum.

\pfitemref{LWasymp}
An immediate consequence of \eqref{EW} and $1-e^{-x}\asymp 1\land x$.

\pfitemref{LWvar}
Since the summands in $W$ are independent,
\begin{equation}
  \begin{split}
      \Var\bigpar{W(t)}
&=\sumi \P(Z_i\le t)\bigpar{1-\P(Z_i\le t)}
=\sumi e^{-\gl_i t} \bigpar{1-e^{-\gl_i t}}
\\
&\le \sumi \P(Z_i\le t)=\E W(t).
  \end{split}
\end{equation}

\pfitemref{LWlim}
First, by \eqref{EW} and monotone convergence, as \ttoo,
\begin{equation}\label{ewl}
  \E W(t)\to \sumi\P(Z_i<\infty)=\sumi\indic{\gl_i>0}=L.
\end{equation}
Furthermore, if $L<\infty$, then \as{} $W(t)= L$ for all large $t$, and thus 
 \eqref{lwlim} holds.

Suppose now that $L=\infty$.
Then $\E W(t)\to\infty$ by \eqref{ewl}.
Let $\gd\in(0,1)$, let $a:=1+\gd$ and choose, for $n\ge1$,
$t_n>0$ such that $\E W(t_n)=a^n$.
(This is possible by \ref{LWa}.)

By \eqref{lwvar} and Chebyshev's inequality, for any $t>0$,
\begin{equation}
  \P\Bigpar{\Bigabs{\frac{W(t)}{\E W(t)}-1}>\gd} 
\le \frac{\Var(W(t))}{(\gd \E W(t))^2}
\le \frac{1}{\gd^2 \E W(t)}.
\end{equation}
Hence, by our choice of $t_n$ and the Borel--Cantelli lemma,
\as{} there exists a (random) $n_0$ such that
$1-\gd \le W(t_n)/\E W(t_n)\le 1+\gd$ for $n\ge n_0$.
This, and the fact that $W(t)$ is increasing, implies that if $t\ge t_{n_0}$,
and we choose $n\ge n_0$ such that $t_n\le t<t_{n+1}$, then
\begin{equation}
  W(t)\le W(t_{n+1})\le (1+\gd) a^{n+1}
=(1+\gd)^2 \E W(t_n)
\le (1+\gd)^2 \E W(t),
\end{equation}
and similarly
\begin{equation}
  W(t)\ge W(t_{n})\ge (1-\gd) a^{n}
\ge (1-\gd)^2 \E W(t_{n+1})
\ge (1-\gd)^2 \E W(t).
\end{equation}
Consequently, \as{}
\begin{equation}
  (1-\gd)^2\le\liminf_{\ttoo} \frac{W(t)}{\E W(t)}
\le\limsup_{\ttoo} \frac{W(t)}{\E W(t)}
\le (1+\gd)^2.
\end{equation}
Since $\gd$ is arbitrarily small, \eqref{lwlim} follows.

\pfitemref{LW'}
By \ref{LWa},
$\E W(t)$ is increasing, and furthermore it is concave with $\E W(0)=0$, and
thus 
$\E W(t)/t$ is decreasing on $(0,\infty)$. Hence,
\begin{equation}
  \min\set{1,t_n'/t_n}
\le \E W(t_n')/\E W(t_n)
\le \max\set{1,t_n'/t_n}
\end{equation}
and the result follows.
\end{proof}

In order to extend this to the dependent sum \eqref{vGt}, we use a lemma.

\begin{lemma}\label{Lx2}
  Let $(I\xij)_{i,j=1}^N$ be a finite or infinite symmetric array of random
  indicator variables, with $\set{I\xij}_{i\le j}$ independent.
Let $I_i:=\max_j I\xij$, and $W:=\sum_i I_i$.
Then
\begin{equation}
  \label{lx2}
  \Var W \le 2 \E W.
\end{equation}
\end{lemma}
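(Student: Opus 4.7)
The plan is to split $\Var W = \sum_i \Var I_i + \sum_{i\neq k}\Cov(I_i,I_k)$ and bound each of the two sums by $\E W$. The diagonal bound is immediate: each $I_i$ is an indicator, so $\sum_i \Var I_i \le \sum_i \E I_i = \E W$, which already accounts for one factor of $\E W$ on the right-hand side. All the substance of the lemma therefore lies in controlling the off-diagonal covariances.

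For fixed $i\neq k$ I would isolate the single shared variable $I_{ik}$: set $I_i^{(k)} := \max_{j\neq k} I_{ij}$ and $I_k^{(i)} := \max_{j\neq i} I_{kj}$, so that $I_i = I_{ik}\vee I_i^{(k)}$ and $I_k = I_{ik}\vee I_k^{(i)}$. The three variables $I_{ik}$, $I_i^{(k)}$, $I_k^{(i)}$ are functions of three pairwise disjoint subfamilies of the independent upper-triangular array $\{I_{ab}:a\le b\}$, hence they are independent. Writing $p:=\P(I_{ik}=1)$, $q:=\P(I_i^{(k)}=1)$, $r:=\P(I_k^{(i)}=1)$ and computing via the complementary events $\{I_i=0\}=\{I_{ik}=0\}\cap\{I_i^{(k)}=0\}$ and similarly for $I_k$,
\begin{equation}
\Cov(I_i,I_k) = \P(I_i=0,\,I_k=0) - \P(I_i=0)\P(I_k=0) = p(1-p)(1-q)(1-r) \le p(1-q)(1-r).
\end{equation}
The last quantity is exactly $\P(E_{ik})$, where $E_{ik}$ is the event that $\{i,k\}$ is an \emph{isolated pair}: $I_{ik}=1$ while $I_{ij}=0$ for every $j\neq k$ and $I_{kj}=0$ for every $j\neq i$.

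The concluding step is combinatorial. Let $M$ be the (random) number of unordered isolated pairs; since $E_{ik}=E_{ki}$, one has $\sum_{i\neq k}\P(E_{ik})=2\E M$. Two distinct isolated pairs cannot share a vertex, for if $\{i,k\}$ is isolated then $I_{ij}=0$ for every $j\neq k$, which rules out any other isolated pair containing $i$; hence the $2M$ endpoints of the isolated pairs are distinct non-isolated indices, and $W\ge 2M$ pointwise. Thus $\sum_{i\neq k}\Cov(I_i,I_k)\le 2\E M\le \E W$, and adding the diagonal contribution gives $\Var W\le 2\E W$. I expect the main obstacle to be recognising that the convenient covariance bound $p(1-q)(1-r)$ is precisely the probability of an event whose total count over $(i,k)$ is controlled deterministically by $\tfrac12 W$; once that identification is made, the independence-based covariance computation and the vertex-disjointness of isolated pairs are both short.
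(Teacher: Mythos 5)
Your proof is correct and follows essentially the same route as the paper: both isolate the single shared variable $I_{ik}$, use independence of the three pieces $I_{ik}$, $I_i^{(k)}$, $I_k^{(i)}$ to get the exact identity $\Cov(I_i,I_k)=p(1-p)(1-q)(1-r)$, and then bound the off-diagonal sum by $\E W$ via an isolation event, with the diagonal contributing the other $\E W$. The only difference is the bookkeeping in the last step: the paper additionally discards the factor $(1-r)$, so that for fixed $i$ the resulting one-sided events (indexed by $k$) are disjoint with union contained in $\{I_i=1\}$, whereas you keep the symmetric two-sided isolated-pair event and invoke the deterministic count $2M\le W$; both give the same constant $2$.
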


\begin{proof}
Assume first that $N<\infty$.
Let $\bI\xij:=1- I\xij$ and $\bI_i:=1-I_i=\prod_j\bI\xij$.
Let $q\xij:=\E\bI\xij=1-\E I\xij$.

Fix $i$ and $j$ with $i\neq j$, and let $\bIx_i:=\prod_{k\neq j} \bI_{ik}$ and
$\bIx_j:=\prod_{k\neq i} \bI_{jk}$.
Then $\bI_i=\bI\xij\bIx_i$ and $\bI_j=\bI\xij\bIx_j$, with $\bI\xij$,
$\bIx_i$ and $\bIx_j$ independent, and thus
\begin{equation}
  \begin{split}   
\Cov(I_i,I_j)
&
=\Cov(\bI_i,\bI_j)
=
\E\bigpar{\bI\xij\bIx_i\bIx_j}
-\E\bigpar{\bI\xij\bIx_i}\E\bigpar{\bI\xij\bIx_j}
\\&
= q\xij \E\bIx_i\bIx_j - \bigpar{q\xij \E\bIx_i}\bigpar{q\xij\bIx_j}
= q\xij (1-q\xij)\E\bIx_i\bIx_j .
  \end{split}
\end{equation}
In particular,
\begin{equation}\label{hc}
\Cov(I_i,I_j)\le (1-q\xij)\E\bIx_i
=\P\bigpar{I\xij=1, \, I_{ik}=0 \text{ for } k\neq j}.
\end{equation}
Summing over $j$, we obtain for every $i$,
since the events $\cE_j:=\set{I\xij=1, \,
I_{ik}=0 \text{ for } k\neq j}$ in \eqref{hc}
are disjoint and with union $\set{\sum_jI\xij=1}=\set{I_i=1}$,
\begin{equation*}
  \begin{split}
    \sum_{j\neq i}\Cov(I_i,I_j)
\le \sum_{j\neq i}(1-q\xij)\E\bIx_i
=\P\Bigpar{\sum_j I\xij=1}
\le\P(I_i=1)
=\E I_i.
  \end{split}
\end{equation*}
Furthermore,
$\Cov(I_i,I_i)=\E I_i - (\E I_i)^2\le \E I_i$.
Consequently, for every $i$,
\begin{equation}
  \begin{split}
    \sum_{j=1}^n\Cov(I_i,I_j)
\le 2 \E I_i,
  \end{split}
\end{equation}
and \eqref{lx2} follows by summing over $i$.
\end{proof}

\begin{lemma}
  \label{LW2}
Let $(Z\xij)\xij$ be a symmetric array of exponential random
variables with 
$\set{Z\xij}_{i\le j}$ independent and
$Z\xij\sim\Exp(\gl\xij)$,
where
$\gl\xij\ge0$ and
$0<\sum\xij\gl\xij<\infty$.
Let $Z_i:=\inf_j Z\xij$ and $W(t):=\sum_i \indic{Z_i\le t}$.
Then $Z_i\sim\Exp(\gl_i)$ with $\gl_i:=\sum_j\gl\xij$.
Moreover,
all results of \refL{LW} hold except \ref{LWvar}, which is replaced by
\begin{equation}
  \label{lw2var}
\Var\bigpar{W(t)}\le 2\E W(t).
\end{equation}
\end{lemma}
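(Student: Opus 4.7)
The plan is to reduce the statement to \refL{LW} and \refL{Lx2}, with the only real new input being the variance bound \eqref{lw2var}. First I would identify the marginal distribution of $Z_i$. For fixed $i$, the variables $\{Z\xij\}_{j\ge1}$ consist of $Z_{ii}$ together with $Z\xij$ for $j>i$ and $Z_{ji}$ for $j<i$, which are all distinct members of the independent family $\{Z_{kl}\}_{k\le l}$ (after using the symmetry $Z\xij=Z_{ji}$). Hence the standard fact that the minimum of independent exponentials is exponential with the sum of the rates gives $Z_i\sim\Exp(\gl_i)$ with $\gl_i=\sum_j\gl\xij$. Also, $\sum_i\gl_i=\sum_{i,j}\gl\xij<\infty$ by symmetry, so the hypothesis of \refL{LW} on the rates is satisfied.

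Next I would observe that parts \ref{LWa}, \ref{LWasymp} and \ref{LW'} of \refL{LW} depend only on the marginal distribution of each $Z_i$: their proofs manipulate $\E W(t)=\sum_i(1-e^{-\gl_i t})$ and use only monotone and dominated convergence together with the fact that $\E W(t)$ is increasing, concave, and continuous. Since $\E W(t)$ is computed in exactly the same way here as in \refL{LW}, these three parts transfer verbatim.

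For the variance bound \eqref{lw2var}, I would apply \refL{Lx2} with $I\xij:=\indic{Z\xij\le t}$. The family is symmetric in $(i,j)$, the indicators $\{I\xij\}_{i\le j}$ are independent (being measurable functions of the independent $Z\xij$ with $i\le j$), and $I_i=\max_j I\xij=\indic{Z_i\le t}$, so $W(t)=\sum_i I_i$; \refL{Lx2} then gives $\Var W(t)\le 2\E W(t)$. In the infinite case one first applies \refL{Lx2} to the truncated array $(I\xij)_{i,j\le N}$, producing $W_N(t):=\sum_{i\le N}\max_{j\le N}I\xij$ with $\Var W_N(t)\le 2\E W_N(t)$, and then passes to the limit $N\to\infty$: $W_N(t)\uparrow W(t)$ and both sides converge by monotone convergence for the expectation and by $L^2$--boundedness for the variance.

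Finally, for the analogue of \ref{LWlim}, I would simply repeat the Borel--Cantelli argument in the proof of \refL{LW}\ref{LWlim} except that Chebyshev now yields
\begin{equation*}
\P\Bigpar{\Bigabs{\frac{W(t)}{\E W(t)}-1}>\gd}\le \frac{2}{\gd^2\E W(t)}.
\end{equation*}
Choosing $t_n$ so that $\E W(t_n)=(1+\gd)^n$ makes the right-hand side summable in $n$, and the rest of the sandwich argument goes through with the factor $2$ absorbed into the constant. No step looks hard; the only mildly delicate point is the truncation argument needed to extend \refL{Lx2} to the infinite array, and this is handled cleanly by monotone convergence.
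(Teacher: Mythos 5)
Your proof is correct and follows essentially the same route as the paper: identify $Z_i\sim\Exp(\gl_i)$ from the independence of $(Z\xij)_j$ for fixed $i$, transfer parts \ref{LWa}, \ref{LWasymp}, \ref{LW'} of \refL{LW} since they only involve $\E W(t)$, invoke \refL{Lx2} for \eqref{lw2var}, and rerun the Borel--Cantelli argument for \ref{LWlim} with the constant $2$ absorbed. Your explicit truncation-and-monotone-convergence step for passing \refL{Lx2} from finite to infinite arrays is a welcome extra detail that the paper leaves implicit.
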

\begin{proof}
It is well-known and elementary that   $Z_i\sim\Exp(\gl_i)$, since
$\xpar{Z\xij}_j$ are independent for every $i$.
Parts \ref{LWa}, \ref{LWasymp} and \ref{LW'} of \refL{LW}
deal only with the expectation,
and their proofs do not need $Z_i$ to be independent.

\refL{Lx2} yields \eqref{lw2var}.

Finally, the proof of \ref{LWlim} holds as before, now using \eqref{lw2var}.
\end{proof}

We return to the random graphs.
We define, for a given measure (matrix) $\mu$,
using Lemmas \ref{LW} and \ref{LW2} together with
\eqref{Nij}--\eqref{eGt},
the functions
\begin{align}
  v(t)&=v(t;\mu):=\E v(\Gt)
=\sumi\bigpar{1-e^{-\mu_i t}}\asymp\sumi\bigpar{1\land(\mu_it)},
\label{vt}\\
  e(t)&=e(t;\mu):=\E e(\Gt)=\sum_{i<j}\bigpar{1-e^{-\mu\xij t}}
\asymp\sum_{i\neq j}\bigpar{1\land(\mu\xij t)}.\label{et}
\end{align}

Since $\Gt$ has no isolated vertices, $e(\Gt)\ge \frac12v(\Gt)$, and thus,
\cf{} \eqref{vev}, 
\begin{equation}\label{e>v}
  e(t)\ge \tfrac12 v(t).
\end{equation}

\begin{theorem}\label{Too}
Assume that $\mu=(\mu\xij)_{i,j=1}^\infty$ is a symmetric non-negative
matrix with  $\mu_{ii}=0$
and $0<\norm{\mu}:=\sum_{i<j}\mu\xij<\infty$.
\begin{romenumerate}
\item \label{Toot}
As \ttoo,
\begin{align}
  v(\Gt)/v(t)&\asto1,\label{toov}\\
  e(\Gt)/e(t)&\asto1.\label{tooe}
\end{align}
Moreover, if $\mu_{ij}>0$ for infinitely many pairs $(i,j)$, then
as \ttoo,
$ v(t)\to\infty$,
$ e(t)\to\infty$ and
$v(\Gt),e(\Gt)\asto\infty$.
\item \label{Toon}
As \mtoo,
\begin{align}
  v(\Gm)/v\bigpar{\norm\mu\qw m}&\asto1,\label{moov}\\
  e(\Gm)/e\bigpar{\norm\mu\qw m}&\asto1.\label{mooe}
\end{align}
In particular, a.s.
\begin{align}
  v(\Gm)&\asymp\sumi\bigpar{1\land(\mu_im)},
\label{vm}\\
  e(\Gm)&
\asymp\sum_{i,j}\bigpar{1\land(\mu\xij m)}.\label{em}
\end{align}
Consequently, if $\mu_{ij}>0$ for infinitely many pairs $(i,j)$, then
as \mtoo,
\as{}
$v(\Gm),e(\Gm)\to\infty$.
\end{romenumerate}
\end{theorem}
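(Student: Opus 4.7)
The plan is to read off part (i) as a direct application of Lemmas \ref{LW} and \ref{LW2} to the representations \eqref{vGt}--\eqref{eGt}, and then to derive part (ii) by de-Poissonization, substituting the stopping time $\tau_m$ and using the law of large numbers \eqref{taulim} together with the regularity of $v(t)$ and $e(t)$ asserted in Lemma \ref{LW}\ref{LW'}.

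For \eqref{tooe}, enumerate the pairs $\set{i,j}$ with $i<j$ as a single sequence and apply \refL{LW} to the independent variables $T_{ij}\sim\Exp(\mu_{ij})$: the hypothesis $\sum_{i<j}\mu_{ij}=\norm\mu<\infty$ is satisfied, so \ref{LWlim} together with the definition \eqref{et} of $e(t)=\E e(\Gt)$ yields $e(\Gt)/e(t)\asto1$. For \eqref{toov}, the variables $T_i=\min_{j\neq i}T_{ij}$ are no longer independent, but the indicators $\indic{T_i\le t}$ are exactly of the form covered by \refL{LW2}, whose version of \ref{LWlim} gives $v(\Gt)/v(t)\asto 1$. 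If $\mu\xij>0$ for infinitely many pairs $(i,j)$, then $L=\infty$ in \refL{LW} applied to $\xpar{T\xij}$, so $e(t)\to\infty$; moreover, infinitely many pairs force infinitely many indices $i$ with $\mu_i>0$, whence $L=\infty$ in the application of \refL{LW2} and $v(t)\to\infty$. The a.s.\ divergence of $v(\Gt)$ and $e(\Gt)$ is then part of \ref{LWlim}.

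For part (ii), \refP{Ptfin} gives $\Gm=\G_{\tau_m}$ with $\tau_m=\sum_{i=1}^m T_i$, $T_i\sim\Exp(\norm\mu)$ i.i.d., so by \eqref{taulim} we have $\tau_m/(\norm\mu\qw m)\asto 1$ as \mtoo, and in particular $\tau_m\asto\infty$. Writing
\begin{equation*}
\frac{v(\Gm)}{v(\norm\mu\qw m)}
=\frac{v(\G_{\tau_m})}{v(\tau_m)}\cdot\frac{v(\tau_m)}{v(\norm\mu\qw m)},
\end{equation*}
the first factor converges a.s.\ to $1$ by part \ref{Toot} evaluated along the random sequence $\tau_m\to\infty$, and the second factor converges a.s.\ to $1$ by \refL{LW}\ref{LW'} applied pointwise to the a.s.\ convergent ratio $\tau_m/(\norm\mu\qw m)$; this proves \eqref{moov}, and \eqref{mooe} is identical. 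The $\asymp$-bounds \eqref{vm} and \eqref{em} then follow by substituting $t=\norm\mu\qw m$ into \eqref{vt}--\eqref{et} and absorbing the constant $\norm\mu\qw$ into the $\asymp$.

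The only delicate point is the interplay in part (ii) between the two randomnesses: the a.s.\ convergence of $\tau_m$ from \eqref{taulim} and the a.s.\ convergence of $v(\G_t)/v(t)$ from part (i). The latter holds for the single process $t\mapsto v(\G_t)$ on a common probability space with $\tau_m$ (since both are constructed from the same Poisson point process $\Xi$ of \refD{D2}), so the substitution $t=\tau_m$ is legitimate; this is the reason for first passing through the Poisson version. Beyond this, everything reduces to applying the already-proven Lemmas \ref{LW}--\ref{LW2}.
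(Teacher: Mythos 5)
Your proposal is correct and follows essentially the same route as the paper: part (i) is read off from Lemma \ref{LW}\ref{LWlim} (for the independent edge indicators) and Lemma \ref{LW2} (for the vertex indicators), and part (ii) is obtained by substituting the stopping time $\tau_m$ via Propositions \ref{Pt}--\ref{Ptfin}, using \eqref{taulim} and Lemma \ref{LW}\ref{LW'} to replace $v(\tau_m)$ by $v(\norm\mu\qw m)$. Your explicit remark that the substitution $t=\tau_m$ is legitimate because both processes live on the common probability space of the Poisson point process $\Xi$ is a point the paper leaves implicit.
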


\begin{proof}
\pfitemref{Toot}
This is an immediate consequence of 
  \refL{LW}\ref{LWlim} and Lemma \ref{LW2}. 

\pfitemref{Toon} 
Part \ref{Toot} and Propositions \ref{Pt} and \ref{Ptfin}
show that
$v(\Gm)/v(\tau_m)=v(\G_{\tau_m})/v(\tau_m)\asto1$.
Furthermore, $\tau_m\sim \norm\mu\qw m$ by \eqref{taulim}, and thus
$v(\tau_m)\sim v\bigpar{\norm\mu\qw m}$ by 
Lemmas \ref{LW}\ref{LW'} and \ref{LW2}.
Hence \eqref{moov} follows. The proof of \eqref{mooe} is the same.

Finally \eqref{vm}--\eqref{em} follow by \eqref{vt}--\eqref{et},
and the final sentence follows by monotone convergence (or by
\refL{LW}\ref{LWlim}).
\end{proof}

Hence, to find asymptotics of the numbers of vertices and edges in our
random graphs, it suffices to study the expectations in
\eqref{vt}--\eqref{et}. 
In particular, 
we note the following consequences.

\begin{corollary}\label{Ctk}
Assume that $\mu=(\mu\xij)_{i,j=1}^\infty$ is a symmetric non-negative
matrix with  $\mu_{ii}=0$
and $0<\norm{\mu}:=\sum_{i<j}\mu\xij<\infty$.
Then:
\begin{romenumerate}
\item   
 $\Gm$ is 
\as{} dense if and only if $e(t)\asymp v(t)^2$ as \ttoo.
\item   
 $\Gm$ is 
 \as{} sparse if and only if $e(t)=o(v(t)^2)$ as \ttoo.
\item   
 $\Gm$ is 
 \as{} extremely sparse if and only if $e(t)\asymp v(t)$ as \ttoo.
\end{romenumerate}
\end{corollary}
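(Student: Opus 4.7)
The plan is to transfer the three dichotomies from the random graphs $\Gm$ to the deterministic functions $v(t), e(t)$ by means of \refT{Too}\ref{Toon}, and then to extend the resulting relations from the discrete sequence $t_m := \norm\mu\qw m$ to all $t \to \infty$ using monotonicity and the continuity property \refL{LW}\ref{LW'}.

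First, I would apply \refT{Too}\ref{Toon} to obtain, almost surely,
\begin{equation*}
  v(\Gm)\sim v(t_m), \qquad e(\Gm)\sim e(t_m), \qquad v(\Gm)^2\sim v(t_m)^2,
\end{equation*}
as $m\to\infty$. This means that each of the ratios $e(\Gm)/v(\Gm)^2$ and $e(\Gm)/v(\Gm)$ is a.s.\ asymptotic to the deterministic ratio $e(t_m)/v(t_m)^2$, respectively $e(t_m)/v(t_m)$. Hence $e(\Gm)\asymp v(\Gm)^2$ a.s.\ iff $e(t_m)\asymp v(t_m)^2$ as $m\to\infty$, and similarly for the other two relations. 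This establishes all three equivalences provided we interpret the right-hand sides as asymptotic statements along the discrete sequence $\{t_m\}$.

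The remaining step is to pass from the sequence $\{t_m\}\mmm$ to continuous $t\to\infty$. By \refL{LW}\ref{LWa} and \refL{LW2}, both $v(t)$ and $e(t)$ are continuous and increasing. For any $t>0$ we can choose $m=\lfloor t\norm\mu\rfloor$, so that $t_m\le t<t_{m+1}$, and then by monotonicity
\begin{equation*}
  v(t_m)\le v(t)\le v(t_{m+1}), \qquad e(t_m)\le e(t)\le e(t_{m+1}).
\end{equation*}
Since $t_{m+1}/t_m\to 1$ as $m\to\infty$, \refL{LW}\ref{LW'} (applied to both $v$ and $e$) gives $v(t_{m+1})/v(t_m)\to 1$ and $e(t_{m+1})/e(t_m)\to 1$, so $v(t)\sim v(t_m)$ and $e(t)\sim e(t_m)$ as $t\to\infty$. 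Consequently, the $\asymp$ and $o$ relations along the discrete sequence transfer to the same relations as $t\to\infty$ through the reals.

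I do not expect a serious obstacle; the heavy lifting has been done in \refT{Too}, and the corollary is essentially a change of variable from $m$ to $t=\norm\mu\qw m$ together with a routine monotonicity argument to fill in non-integer times. The only point requiring a moment's care is the discrete-to-continuous extension, which is handled cleanly by \refL{LW}\ref{LW'}.
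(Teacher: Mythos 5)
Your argument is correct and follows the same route as the paper, whose proof of this corollary is simply an appeal to \refT{Too}\ref{Toon}; you have merely made explicit the transfer from the discrete sequence $t_m=\norm\mu\qw m$ to continuous $t$ via monotonicity and \refL{LW}\ref{LW'} (one could equally cite \eqref{vC}), which the paper leaves implicit. No gaps.
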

\begin{proof}
  By \refT{Too}\ref{Toon}.
\end{proof}

\begin{corollary}\label{Cot}
Assume that $\mu=(\mu\xij)_{i,j=1}^\infty$ is a symmetric non-negative
matrix with  $\mu_{ii}=0$
and $0<\norm{\mu}:=\sum_{i<j}\mu\xij<\infty$.
Then, \as,
\begin{romenumerate}
\item 
$v(\Gm)=o(m)$ and $e(\Gm)=o(m)$  as \mtoo;
\item 
  $v(\Gt)=o(t)$ and $e(\Gt)=o(t)$  as \ttoo.
\end{romenumerate}
\end{corollary}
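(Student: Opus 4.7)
The statement is essentially a direct corollary of \refT{Too} combined with the "$\E W(t)/t\to 0$" clause of \refL{LW}\ref{LWa} (which, by \refL{LW2}, applies also to the dependent sum defining $v(t)$). My plan is therefore to prove (ii) first and then transfer to (i) via the Poissonization relation.

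\textbf{Step 1 (the Poisson version).} By \refL{LW}\ref{LWa} together with \refL{LW2}, the functions
\begin{equation*}
v(t)=\sumi\bigpar{1-e^{-\mu_i t}},
\qquad
e(t)=\sum_{i<j}\bigpar{1-e^{-\mu_{ij}t}}
\end{equation*}
satisfy $v(t)/t\to 0$ and $e(t)/t\to 0$ as $t\to\infty$; this is exactly the statement in \refL{LW}\ref{LWa} applied with $\gl_i=\mu_i$ (for $v$) and with $\gl_{ij}=\mu_{ij}$ over the index set $\set{(i,j):i<j}$ (for $e$), and the hypothesis $\sum_i\mu_i=2\norm\mu<\infty$, $\sum_{i<j}\mu_{ij}=\norm\mu<\infty$ is satisfied.

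\textbf{Step 2 (transfer to the random graphs $\Gt$).} By \refT{Too}\ref{Toot} we have $v(\Gt)/v(t)\asto 1$ and $e(\Gt)/e(t)\asto 1$ as \ttoo. Multiplying these by the deterministic limits from Step~1 gives
\begin{equation*}
\frac{v(\Gt)}{t}=\frac{v(\Gt)}{v(t)}\cdot\frac{v(t)}{t}\asto 0,
\qquad
\frac{e(\Gt)}{t}=\frac{e(\Gt)}{e(t)}\cdot\frac{e(t)}{t}\asto 0,
\end{equation*}
which is (ii). (If $v(t)$ or $e(t)$ happens to stay bounded, then by \refL{LW}\ref{LWlim} the corresponding quantity $v(\Gt)$ or $e(\Gt)$ is \as{} bounded, so the conclusion is trivial; this edge case needs at most a line.)

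\textbf{Step 3 (from $\Gt$ to $\Gm$).} By \refP{Pt}, $\Gm=\G_{\tau_m}$, and by \eqref{taulim}, $\tau_m/m\asto\norm\mu\qw\in(0,\infty)$; in particular $\tau_m\asto\infty$. Combining with Step~2,
\begin{equation*}
\frac{v(\Gm)}{m}=\frac{v(\G_{\tau_m})}{\tau_m}\cdot\frac{\tau_m}{m}\asto 0\cdot\norm\mu\qw=0,
\end{equation*}
and the same argument gives $e(\Gm)/m\asto 0$. This proves (i).

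\textbf{Expected difficulty.} There is no real obstacle: every ingredient is already in hand. The only mildly delicate point is making sure the "$o(t)$" conclusion is valid even when the random graph is degenerate (finitely many vertices or edges almost surely), which is why the formulations "$v(\Gt),e(\Gt)=o(t)$" are convenient rather than ratio statements; this is handled by the boundedness alternative from \refL{LW}\ref{LWlim} noted at the end of Step~2.
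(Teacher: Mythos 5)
Your proof is correct and follows essentially the same route as the paper: the paper's one-line proof likewise combines \refT{Too} with the facts $v(t)/t\to0$ and $e(t)/t\to0$ from \refL{LW}\ref{LWa} and \refL{LW2}. Your extra care about the degenerate (bounded) case and the explicit transfer via $\tau_m/m\asto\norm\mu\qw$ are just spelled-out versions of what the paper leaves implicit.
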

\begin{proof}
By \refT{Too}, since $e(t)/t\to0$ and $v(t)/t\to0$ as \ttoo{} by
\refL{LW}\ref{LWa} and \refL{LW2}.  
\end{proof}

\begin{remark}\label{Rmany}
  If we consider the random multigraph $\GGm$ we have (at least in the
  loop-less case, and in general with a minor modification)
  $v(\GGm)=v(\Gm)=o(m)$ by \refC{Cot}, while by definition there are $m$
  edges. Hence, the average degree $2e(\GGm)/v(\GGm)\to\infty$ \as{} as
  \mtoo.
Similarly, the average number of copies of each edge $e(\GGm)/e(\Gm)\to\infty$
\as
\end{remark}

For future use we note also that since $v(t)$ is concave with $v(0)=0$,
for any $C\ge1$,
\begin{equation}\label{vC}
v(t)\le v(Ct)\le Cv(t).
\end{equation}
Hence $v(Ct)\asymp v(t)$ for any constant $C>0$.

We have so far considered only simple first order properties of $v(\Gm)$ and
$e(\Gm)$. For the number of edges, much more follows from the central limit
results in the references mentioned above. In particular, the local and
global central limit theorems in \cite{SJ189} apply and yield
the following.
\begin{theorem}\label{Teve}
Let $\mu$ be as in \refT{Too}.
The following hold
with $O(1)$ bounded by an absolute constant $C$ uniformly for all $n\ge1$,
$x\in\bbR$, and matrices $\mu$.

Let\/ $\gss_m:=\Var(e(\Gm))$. Then
\begin{align}
\P\bigpar{e(\Gm)=\floor{\E e(\Gm)+x\gs_m}}
=\frac{e^{-x^2/2}}{\sqrt{2\pi}\gs_m}
+  \frac{O(1)}{\gss_m}. 
\end{align}
Moreover, assuming for simplicity $\norm\mu=1$,
\begin{align}
\E e(\Gm)&=\E e(\G_m)+O(1)  ,\label{eva}
\\
\Var (e(\Gm))&=\Var( e(\G_m))+O(1)  ,\label{evb}
\end{align}
 and,
recalling \eqref{et} 
and defining
$\tgss_t:=\Var(e(\Gt))$, 
\begin{align}
\P\bigpar{e(\Gm)=\floor{e(m)+x\tgs_m}}
=\frac{e^{-x^2/2}}{\sqrt{2\pi}\tgs_m}
+ \frac{O(1)}{\tgss_m}. 
\end{align}

In particular, if $\mtoo$ and $\tgss_m\to\infty$, then
$e(G_m)-\E e(G_m))/\gs_m\dto N(0,1)$ and
$e(G_m)-e(m))/\tgs_m\dto N(0,1)$.

The $O(1)$ in \eqref{eva}--\eqref{evb} can be replaced by $o(1)$ as $\mtoo$
for a fixed $\mu$.
\end{theorem}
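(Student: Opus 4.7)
The approach is to identify $e(\Gm)$ with the number of occupied boxes in the classical infinite occupancy scheme: with $\norm\mu=1$, the \iid{} edges $Y_1,\dots,Y_m$ serve as balls, each unordered pair $\set{i,j}$ (and each diagonal $\set{i,i}$ if loops are allowed) as a box of probability $\mu\xij$, and $e(\Gm)$ counts the distinct boxes hit. With this identification every assertion of the theorem is a specialization of a result from \cite{SJ189} on infinite urn schemes, and the proof reduces to translating notation plus a standard depoissonization.

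First, for the Poisson version, \eqref{eGt} writes $e(\G_t)=\sum_{i<j}\indic{T\xij\le t}$ as a sum of \emph{independent} Bernoulli indicators with parameters $1-e^{-\mu\xij t}$. The local CLT for such sums, in the sharp form with absolute-constant error uniform in all parameters established in \cite{SJ189}, gives
\begin{equation*}
\P\bigpar{e(\G_t)=\floor{e(t)+x\tgs_t}}=\frac{e^{-x^2/2}}{\sqrt{2\pi}\,\tgs_t}+\frac{O(1)}{\tgss_t},
\end{equation*}
the Poisson analogue of the last displayed equation of the theorem.

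Second, depoissonize. Set $a_n:=\E e(\Gq_n)=\sum_{k=1}^n p_k$, where $p_k:=\P(Y_k\notin\set{Y_1,\dots,Y_{k-1}})\in[0,1]$ is decreasing in $k$, so that $a_n$ is concave. By \refR{Rcond} we have $\E e(\G_m)=\E a_{N(m)}$ with $N(m)\sim\Po(m)$. A Taylor expansion, combined with the smoothness estimate
\begin{equation*}
m\,\abs{a_{m+1}-2a_m+a_{m-1}}=m\sum\xij \mu\xij^2(1-\mu\xij)^{m-1}=O(1),
\end{equation*}
which follows from $xe^{-x}\le 1/e$ and $\norm\mu=1$, yields $\abs{a_m-\E a_{N(m)}}=O(1)$ and hence \eqref{eva}. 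The bound \eqref{evb} follows analogously from the law of total variance applied to $e(\G_m)=e(\Gq_{N(m)})$, using that one additional ball changes the occupancy count by at most~$1$. Combining \eqref{eva}--\eqref{evb} with the Poisson local CLT and the rearrangement $\tgss_m=\gss_m+O(1)$ then yields both local CLT statements for $e(\Gm)$; the distributional CLTs and the $o(1)$ sharpening for fixed $\mu$ follow by routine arguments (uniform integrability of the squared Gaussian tail and dominated convergence in the Taylor remainder).

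The main obstacle is obtaining the depoissonization error as $O(1)$ \emph{uniformly} in $\mu$: the naive triangle bound $\abs{a_{N(m)}-a_m}\le\abs{N(m)-m}=\Op(\sqrt m)$ is far too weak, and one must exploit the concavity of $a_n$ together with the smoothness estimate above. This is precisely the content of the depoissonization lemma proved in \cite{SJ189}; once it is available, the remainder of the argument is bookkeeping.
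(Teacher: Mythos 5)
Your proposal is correct and follows essentially the same route as the paper: the paper's entire proof is the observation (made just before the theorem statement) that $e(\Gm)$ is the number of occupied boxes when $m$ balls are thrown into boxes indexed by pairs $\set{i,j}$ with probabilities $\mu\xij$, followed by a citation of the relevant local limit theorems and Poissonization/depoissonization results of \cite{SJ189}. Your additional sketch of the internal mechanics of \cite{SJ189} (the local CLT for the independent-indicator Poisson version plus the concavity/smoothness-based depoissonization) is accurate but is material the paper simply delegates to that reference.
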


\begin{proof}
  By 
\cite[Theorems 2.1, 2.3, 2.4 and Corollary 2.5, together with Section 9]{SJ189}.
\end{proof}

Note that $e(m)=\E e(\G_m)$ and $\tgss_m=\Var e(\G_m)$ are given by
\eqref{et} and \eqref{lwvar}; they are usually simpler and more convenient to
handle than $\E e(\Gm)$ and $\gss_m=\Var(e(\Gm))$.

We conjecture that similar results holds for $v(\Gm)$, the number of
vertices.
However, we cannot obtain this directly from results on the occupancy
problem
in the same way as \refT{Teve},
again because the variables $N_i(t)$ are dependent.
(The number of vertices corresponds to an occupancy problem where balls are
thrown in pairs, with a dependency inside each pair.)

\begin{problem}
  Show asymptotic normality for $v(\Gm)$ when $\Var(v(\Gm))\to\infty$.
\end{problem}

\subsection{The case with dust or attached stars}\label{SSdust}
We consider briefly the case when
the model contains dust (other than loops) or attached stars.
In this case,  the results are quite different. 
We may for simplicity assume that there are no loops at all, since loops are
deleted in any case.
Thus $\mu_{ii}=0$ for $i\ge0$ and
$\mu_{0i}>0$ for some $i\in\bbNw$.

The number of edges in
the dust and attached stars of $\Gt$ is $\Po(ct)$ with
$c:=\sum_{i=-1}^\infty\mu_{0i}>0$, and thus this number is \as{} $\sim
ct\asymp t$ as \ttoo, by the law of large numbers for the Poisson process. 
(Recall that all edges in the dust and attached stars of
$\GGt$ are simple, so the number of them is the same in $\Gt$ and in $\GGt$.)
It follows by \refP{Pt}
that the number of edges in the dust and attached stars of
$\Gm$ \as{} is $\asymp m$.
Moreover, since each edge in the dust or an attached star 
has at least one endpoint
that is not shared by any other edge, the same estimates
hold for the number of
vertices in the dust and attached stars.
This leads to the following theorem, which shows that if there is any dust
or attached star all, then those parts will dominate the random graphs.

\begin{theorem}\label{Tdust}
  Assume that $\mu_{0i}>0$ for some $i\in\bbNw$.
Then, \as,
\begin{romenumerate}
\item \label{Tdust1}
$v(\Gm)\asymp m$ and $e(\Gm)\asymp m$  as \mtoo;
\item \label{Tdust2}
  $v(\Gt)\asymp t$ and $e(\Gt)\asymp t$  as \ttoo.
\end{romenumerate}
Moreover, \as, all but a fraction $o(1)$ of the edges and vertices are in
the dust or attached stars.

Consequently, the random graphs $\Gm$ are \as{} extremely sparse, but in a
rather trivial way.
\end{theorem}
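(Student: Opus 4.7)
The plan is to exploit the three-way decomposition of $\GGt$ described just before the theorem into an independent central part, attached stars, and dust, and to show that under the hypothesis the dust-plus-stars part alone already contains $\Theta(t)$ vertices and $\Theta(t)$ edges, while an easy upper bound via the total number of edges of $\GGt$ shows one cannot do better.

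\emph{Step 1: lower bounds for $\Gt$.} By hypothesis, $c' := \mu_{0,-1}+\sum_{i\ge 1}\mu_{0i}>0$ (this is the total intensity of the non-loop edges in dust and attached stars; the $\mu_{00}$-loops are deleted and play no role). The non-loop edges from dust and attached stars in $\GGt$ arrive according to independent Poisson processes with total rate $c'$, so the strong law of large numbers for Poisson processes yields that their number is \as{} $\sim c' t$. Since each such edge has at least one endpoint that is a blip, and blips by construction belong to only a single edge of $\GG_\infty$, these edges are all distinct in $\Gt$ and each contributes at least one private vertex. Hence \as{} both $e(\Gt)$ and $v(\Gt)$ are bounded below by $(c'+o(1))t$, and in particular are \as{} $\gtrsim t$.

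\emph{Step 2: upper bounds.} We have $e(\Gt)\le e(\GGt) = N(t)\sim t\|\mu\|$ \as{} by the strong law for the Poisson counting process, and $v(\Gt)\le 2e(\Gt)$ since $\Gt$ has no isolated vertices and no loops. Combining with Step~1 gives (ii). Part (i) follows by transfer: by \refP{Pt} and \refP{Ptfin}, $\Gm=\G_{\tau_m}$ with $\tau_m\sim \|\mu\|\qw m$ \as{} by \eqref{taulim}, so $v(\Gm),e(\Gm)\asymp \tau_m\asymp m$ \as, proving (i).

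\emph{Step 3: the fraction in central part is $o(1)$.} Let $\mu_C$ denote the restriction of $\mu$ to $\bbN\times\bbN$ (still with $\mu_{ii}=0$). The central part of $\GGt$ is distributed as $\GG_{t,\mu_C}$ and satisfies the hypotheses of \refC{Cot}; therefore its vertex and edge counts are both $o(t)$ \as{} as \ttoo. Since by Step~1 the dust-plus-stars part has $\asymp t$ vertices and edges, a.s.\ all but a fraction $o(1)$ of the vertices and edges of $\Gt$ (and hence, after time-change, of $\Gm$) lie in the dust or attached stars. The last statement of the theorem is immediate: $e(\Gm)\asymp m\asymp v(\Gm)$, which is precisely extreme sparsity.

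The only point that requires care---and is the main thing to get right---is the claim that each dust or star edge contributes a \emph{distinct} vertex not shared with any other edge; this rests on the blip property (the vertex labels $0,-1$ are replaced, when forming the unlabelled hypergraph, by fresh labels used for no other edge), and is what converts the Poisson lower bound on edges into a comparable lower bound on vertices. Beyond that, everything is a routine combination of the LLN with results already in the paper.
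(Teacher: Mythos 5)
Your proposal is correct and follows essentially the same route as the paper: a law-of-large-numbers lower bound of order $t$ from the Poisson arrivals of dust and star edges (using the blip property to get distinct edges and private vertices), the trivial upper bound from $e(\GGt)=N(t)$, Corollary~\ref{Cot} applied to the central part for the $o(1)$ fraction, and transfer to $\Gm$ via Proposition~\ref{Pt}. The only difference is that you spell out the upper bound explicitly, which the paper leaves implicit.
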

\begin{proof}
The argument before the theorem shows \ref{Tdust1} and \ref{Tdust2}.

Moreover, \refC{Cot} applies to the central part of $\Gt$
and shows that the number of
edges and vertices there \as{} are $o(t)$, and thus only
a fraction $o(1)$ of all edges and vertices.
By \refP{Pt}, the same holds for $\Gm$.
\end{proof}

\section{Rank 1 multigraphs}\label{Srank1}

We turn to considering specific examples of the construction.
One interesting class of examples are constructed as follows.

\begin{example}[Rank 1]\label{Erank1}
  Let $(q_i)_1^\infty$ be a probability distribution on $\bbN$, and
  construct a sequence of \iid{} edges $e_1,e_2,\dots$, each obtained by
  selecting the two   endpoints as independent random vertices with the
  distribution $(q_i)_i$. (Thus loops are possible.)
Define the
random multigraph $\GGm$
by taking 
the $m$ edges $e_1,\dots,e_m$,
letting the vertex set be the set of their endpoints.
(Equivalently: start with the vertex set $\bbN$ and then remove all isolated
vertices.) 

In other words, let $V_1,V_2,\dots$ be an \iid{} sequence of vertices with
the distribution $(q_i)_i$, and let the edges of $\GGm$ be $V_1V_2, V_3V_4,
 \dots,V_{2m-1}V_{2m}$.

This is clearly a random multigraph of the type constructed in
\refS{Sgraphs},
with
\begin{equation}\label{muij1}
  \mu_{ij}=
  \begin{cases}
	2q_iq_j,&i\neq j,
\\
q_i^2, &i=j.
  \end{cases}
\end{equation}
We thus have, by \eqref{mui},
\begin{equation}\label{mui1}
  \mu_i=\sum_{j\neq i} 2q_iq_j+q_i^2=2q_i-q_i^2.
\end{equation}
In particular, $\mu_i\asymp q_i$.

The corresponding Poisson model $\GGt$ is by \refP{Pt} obtained by taking a
Poisson number of edges $e_1,\dots,e_{N(t)}$, with $N(t)\sim\Po(t)$.

As usual, we obtain the corresponding simple graphs by omitting all
repeated edges and deleting all loops.
\end{example}

We call a random multigraph constructed as in \refE{Erank1}, or equivalently
by \eqref{muij1}, for some (possibly random) probability distribution
$(q_i)_1^\infty$, a \emph{rank 1 edge exchangeable multigraph}, 
for the reason that the matrix
\eqref{muij1} is a rank 1 matrix except for the diagonal entries. 

\begin{remark}
\label{Rloop1}  
The diagonal entries, creating loops, are less important to us.
In the multigraph examples below, it is natural, and simplifies the results, 
to allow loops. However, when we consider the simple graphs $\Gt$ and
$\Gm$, we ignore loops and, see \refR{Rloop}, it is then simpler 
to  modify \eqref{muij1} by taking $\mu_{ii}=0$; 
we still say that the resulting random graphs are rank 1.
\end{remark}

\begin{remark}\label{Rrank1vertex}
Note that the rank 1 random graphs in \cite{SJ178} are 
different; they are simple graphs, and they are
vertex exchangeable or modifications of vertex
exchangeable random graphs, \cf{} \refSS{SSvex}.  
Nevertheless, both types of ``rank 1'' random graphs can be seen as based on
the same idea: each vertex is given an ``activity'' ($q_i$ in our case), and
the probability of an edge between two vertices is proportional to the
product of their activities.
(See the references in \cite{SJ178} for various versions of this idea.)
\end{remark}

Recall that the \emph{configuration model} is an important model for
constructing random multigraphs with a given degree sequence, which is
defined as follows, see \eg{} \citet{Bollobas}.

\begin{definition}[Configuration model]
  Given a sequence $(d_i)_{i=1}^n$ of non-negative integers with $\sum_id_i$
  even,
the random multigraph \ggndd{} is defined by considering a set of $\sum_i d_i$
\emph{half-edges} (or \emph{stubs}), of which $d_i$ are labelled $i$ for
each $i\in[n]$, and taking a uniformly random matching of 
the half-edges; each pair of half-edges is interpreted as an edge
between the corresponding vertices.
\end{definition}

By construction, the multigraph \ggndd{} has degree sequence $\ddn$.
(With a loop counted as 2 edges at its only endpoint.)
Note that the distribution of $\ggndd$ is not uniform over all multigraphs
with this 
degree sequence. (As is well-known, and easy to see, the probability
distribution has a factor (weight) $1/2$ for each loop and $1/\ell!$
for each edge of multiplicity $\ell>1$; in particular, conditioned on being
a simple graph, $\ggndd$ has a uniform distribution.)
Nevertheless, $\ggndd$ has the right distribution for our purposes.

\begin{theorem}\label{Tconfig}
  The random multigraph $\GGm$ constructed in \refE{Erank1}
has, conditioned on its degree sequence  $\ddn$, the same distribution as 
the random multigraph $\ggndd$ constructed by the configuration model for
that degree sequence.

The same holds for $\GGt$.
\end{theorem}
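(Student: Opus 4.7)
The plan is to view both random multigraphs as arising from a random sequence of $2m$ vertex labels, paired in order, and to show that conditioned on the vertex-degree counts the two sequences have the same distribution.

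First I would describe $\GGm$ concretely in terms of an i.i.d.\ vertex sample. Let $V_1,\dots,V_{2m}$ be i.i.d.\ with law $(q_i)_{i\in\bbN}$, and let the edges of $\GGm$ be the unordered pairs $\set{V_{2k-1},V_{2k}}$ for $k=1,\dots,m$, as in \refE{Erank1}. The degree $d_i$ of vertex $i$ in $\GGm$ equals the number of times $i$ appears in the sequence $V_1,\dots,V_{2m}$ (with a loop counted twice). Hence the degree sequence $(d_i)$ is measurable with respect to the multinomial counts of the sample, and $\sum_i d_i=2m$.

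The key observation is that, conditioned on the count vector $(d_i)_i$, the sequence $(V_1,\dots,V_{2m})$ is uniformly distributed among all $(2m)!/\prod_i d_i!$ arrangements of the multiset containing $d_i$ copies of $i$ for each $i$; this is a standard property of i.i.d.\ multinomial sampling. The multigraph $\GGm$ is obtained from this arrangement by pairing consecutive entries.

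Next I would recall the configuration model $\ggndd$: one takes a uniformly random perfect matching on the set of $\sum_i d_i=2m$ half-edges, of which $d_i$ are labelled $i$. An equivalent construction is to place the half-edges in a uniformly random order $H_1,\dots,H_{2m}$ and pair them as $\set{H_{2k-1},H_{2k}}$; each matching arises from the same number $2^m m!$ of orderings, so the resulting matching is uniform. Since the multigraph depends on this ordering only through the sequence of vertex labels $(\ell(H_1),\dots,\ell(H_{2m}))$ attached to the half-edges, and since a uniformly random ordering of the distinguishable half-edges induces a uniformly random ordering of the multiset of labels (each of the $(2m)!/\prod_i d_i!$ label sequences corresponds to exactly $\prod_i d_i!$ orderings of half-edges), the distribution of the label sequence is uniform on arrangements of the multiset with $d_i$ copies of $i$.

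Therefore, the law of $(V_1,\dots,V_{2m})$ given $(d_i)$ agrees with the law of the label sequence in the configuration-model construction, and consequently the induced multigraphs have the same conditional distribution. The extension to $\GGt$ is immediate: by \refR{Rcond}, conditionally on $N(t)=m$ the multigraph $\GGt$ has the same law as $\GGm$, so the same conditional statement holds for $\GGt$ by conditioning first on $N(t)$ and then on the degree sequence. I expect the main subtlety to be the bookkeeping that compares the uniform distribution over matchings of distinguishable half-edges with the uniform distribution over orderings of a multiset of labels; once this combinatorial identity is made precise, the rest is essentially automatic.
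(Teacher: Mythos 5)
Your proposal is correct and follows essentially the same route as the paper: both rest on the exchangeability of the i.i.d.\ sequence $V_1,\dots,V_{2m}$, which makes its conditional law given the degree counts uniform over arrangements of the multiset, and on the observation that the configuration model's uniform matching is equivalent to pairing consecutive entries of a uniformly ordered sequence of half-edges. The paper phrases this more compactly (inserting a uniform random permutation rather than counting arrangements explicitly), but the argument, including the reduction of $\GGt$ to $\GGm$ via conditioning on $N(t)$, is the same.
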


\begin{proof}
  In the construction of $\GGm$ above, 
the sequence $V_1,\dots,V_{2m}$ is \iid, and
  thus exchangeable; hence its distribution is unchanged if we replace 
each $V_i$ by $V_{\pi(i)}$ for a uniformly random permutation $\pi$ of
$[2m]$,
independent of everything else.
Consequently, the distribution of $\GGm$ is the same if we modify the
definition above and let the edges be
$V_{\pi(1)}V_{\pi(2)},\dots,V_{\pi(2m-1)}V_{\pi(2m)}$; but this is the same
as saying that the edges are obtained by taking a random matching of the
multiset \set{V_1,\dots,V_{2m}}, which is precisely what the configuration
model does.
(Note that
the vertex degree $d_i$ is the number of times $i$ appears in 
$V_1,\dots,V_{2m}$.)

The result for $\GGt$ follows, since the degree sequence tells how many
edges there are, so conditioning on the degree sequence implies
conditioning on $e(\GGt)=N(t)$, which reduces to the case of $\GGm$ just
proved, see \refR{Rcond}.
\end{proof}

\begin{remark}\label{Rsufficient}
  In statistical language, the theorem implies that the degree distribution
  is a sufficient statistic for the family of distributions of multigraphs
  $\GGm$ (or $\GGt$) given by \refE{Erank1} with different distributions
  $(q_i)_1^\infty$.
\end{remark}

\begin{example}\label{EER}
  A trivial example of the construction in \refE{Erank1} is obtained by
  fixing $n\ge1$ and letting $q_i=1/n$, $1\le i\le n$, \ie, the uniform
  distribution on $[n]$.
This means that we consider a sequence of \iid{} edges, each obtained by
taking the two endpoints uniformly at random, and independently, from $[n]$.
In other words, the endpoints of the edges are obtained by drawing with
replacement from $[n]$.
This gives the random multigraph process studied in \eg{} \cite{SJ97},
which is a natural multigraph version of the (simple) random graph process
studied by \citet{ER1960}.
\end{example}

The rank 1 random multigraphs in \refE{Erank1} appear also hidden in some
other examples. 

\begin{example}[The Hollywood model]\label{Ehollywood}
The Hollywood model of a random hypergraph
was defined in \citet{CraneD-edge}
using the language of actors participating in the same movie,
see \cite{CraneD-edge} for details.
We repeat their definition in somewhat different words.

The model can be defined by starting with the two-parameter version of the 
Chinese restaurant process, see \eg{} \cite[Section 3.2]{Pitman} 
and \cite{Crane-Ewens}, 
which
starts with a single table with one customer. New customers arrive, one by
one; if a new customer arrives when there are $n$ customers seated at $k$
tables,
with $n_i\ge1$ customers at table $i$, then the new customer is placed: 
\begin{equation}\label{chinese}
\left\{
  \begin{alignedat}{2}
	&\text{at  table $i$ ($1\le i\le k$) with probability}\quad
&& 
\xpqfrac{n_i-\ga}{n+\gth},
\\
&\text{at a new table $k+1$ with probability}&&
	  \xpqfrac{\gth+k\ga}{n+\gth}.
  \end{alignedat}
\right.
\end{equation}
Here $\ga$ and $\gth$ are parameters, and either
\begin{romenumerate}
\item\label{ch1} $0\le\ga\le1$ and $\gth>-\ga$, or
\item\label{ch2} $\ga<0$ and $\gth=N|\ga|>0$ for some $N\in \bbN$.
\end{romenumerate}
In case \ref{ch2}, there are never more than $N$ tables; in case \ref{ch1},
the number of tables grows \as{} to $\infty$.

In the construction of the Hollywood model hypergraph, 
the vertices are the tables in the Chinese restaurant process.
We furthermore draw the sizes of the edges as \iid{} random
variables $\KK_j$ with some distribution $\nu$ on the non-negative integers
$\bbN$. 
The first edge is then defined by (the set of tables of) 
the first $\KK_1$ customers,
the second edge by the next $\KK_2$ customers, and so on.
The random hypergraph $\G_m$ with $m$ edges is thus described by the
first $\KK_1+\dots+\KK_m$ customers. 

 A standard calculation shows that the sequence of table numbers is
exchangeable, except that the numbers occur for the first time in the
natural order; to be precise, the probability of any finite sequence of
table numbers, such that the first 1 appears before the first 2, and so on,
depends only on the number of occurences of each number. 
Consequently, as noted in \cite{CraneD-edge},
since we ignore vertex labels, 
and the sequence $\KK_1,\KK_2,\dots$ is \iid{} and independent of the Chinese
restaurant process,
the random hypergraph
$\GG_\infty$
is exchangeable, and by the representation theorem by
\citet{CraneD-edge,CraneD-relational}, see \refR{RdeFinetti}, the Hollywood
model can be constructed as in \refD{D1} for some random measure $\mu$ on
$\bbN$.

We can see this more concretely by
replacing the table labels $i\in \bbN$ by \iid{} random labels $U_i\sim U(0,1)$;
then the sequence of table labels of the customers is exchangeable.
Hence, by de Finetti's theorem, there exists a random probability measure
$\hP$ on $\oi$ such that conditioned on $\hP$,
the sequence of (new) table labels is an \iid{} sequence with distribution
$\hP$. 
Clearly, the random measure $\hP=\sum_i \tP_i\gd_{U_i}$ for some random
sequence $\tP_i$ of numbers with $\sum_i\tP_i=1$.
Furthermore, by the law of large numbers, for every $i\in\bbN$, 
$\tP_i$ equals \as{} the asymptotic frequency of customers
sitting at the
table originally labelled $i$ in the Chinese restaurant process.
Hence, the random probability
measure $\tP=(\tP_i)_1^\infty$ on $\bbN$ has the distribution
$\GEM(\ga,\gth)$, see \cite[Theorem 3.2 and Definition 3.3]{Pitman}.
Consequently, the Hollywood model hypergraph can be constructed as follows: 
Let the random probability measure $\tP$ on $\bbN$ have the distribution
$\GEM(\ga,\gth)$; conditionally
given $\tP$ take an infinite \iid{} sequence of vertices with
distribution $\tP$; construct the edges by taking the first $X_1$ vertices,
the next $X_2$ vertices, \dots; finally, ignore the vertex labels.

We specialize to the graph case and assume from now on that $\KK_j=2$
(deterministically).
Thus edges are constructed by taking the customers pairwise as they arrive.
We then see by comparing the constructions above and in \refE{Erank1}
that
the Hollywood model yields the same result as the rank 1 model in \refE{Erank1},
based on a random probability distribution with distribution
$\GEM(\ga,\gth)$.

Since the order of the probabilities $q_i$ does not matter in \refE{Erank1}, 
we obtain the same result if we reorder the probabilities $\tP_i$ in
decreasing order; this gives the Poisson--Dirichlet distribution
$\PD(\ga,\gth)$ \cite[Definition 3.3]{Pitman}, 
and thus the Hollywood model is also given by the rank 1
model based on $\PD(\ga,\gth)$. 

\refT{Tconfig} shows that an yet another way to define the Hollywood model
multigraph $\GGm$ is to take the configuration model  where the degree
sequence $(d_i)_1^m$ is the (random) sequence of numbers of customers at
each table in the Chinese restaurant process when there are $2m$ customers.
\end{example}

\begin{example}
 \citet{Pittel} considers the random multigraph process with a fixed vertex
set $[N]$, where edges are added one by one (starting with no edges)
such that 
the probability that a new edge joins two distinct vertices $i$ and $j$
is proportional to $2(d_i+\ga)(d_j+\ga)$, and the probabiity that the new
edge is a loop at $i$ is proportional to $(d_i+\ga)(d_i+1+\ga)$; 
here $d_i$ is the current
degree of vertex $i$ and $\ga>0$ is a fixed parameter.
(\cite{Pittel} considers also the corresponding process for simple graphs;
we do not consider that process here.)

It is easily seen that this 
multigraph process can be obtained as above, with a minor modification of
the Chinese restaurant process. Consider now a restaurant with a fixed
number $N$ of tables, initially empty, 
and seat each new customer at table $i$ with probability
\begin{equation}\label{japanese}
(n_i+\ga)/(n+N\ga),   
\end{equation}
where  $n_i\ge0$ is the number of customers at
table $i$ and $n$ is their total number. Then construct edges by taking the
customers pairwise, as above; this yields the multigraph process just described.

Furthermore,
although this construction uses a modification of the Chinese restaurant
process, we can relabel the tables in the random order that they are
occupied.
It is then easily seen that we obtain the Chinese restaurant process 
\eqref{chinese} with parameters $(-\ga,N\ga)$. 
Since the vertex labels are ignored, this means that
Pittel's multigraph process is the same as the Hollywood model with
parameters $(-\ga,N\ga)$.
Consequently, it can be defined by the rank 1 model in \refE{Erank1} with
the random probability distribution $\GEM(-\ga,N\ga)$ on $[N]\subset\bbN$, or,
equivalently, the random probability distribution $\PD(-\ga,N\ga)$.

Moreover, the restaurant process \eqref{japanese} can be seen as a \Polya{}
urn process, with balls of $N$ different colours and initially $\ga$ balls
in each colour, where $n_i$ is
the number of additional balls of color $i$ in the urn; 
balls are drawn uniformly at
random from the urn, and each drawn ball is replaced together with a new
ball of the same colour. 
Note that then $n_i$ is the number of times colour $i$ has been drawn.
(It does not
matter whether $\ga$ is an integer or not; the extension to non-integer
$\ga$ causes no mathematical problem, 
see \eg{} \cite[Remark 4.2]{SJ154}, \cite{SJ169} or \cite{Jirina}.)
The sequence of vertex labels is thus given by the sequence of colours of
the balls drawn from this urn.
It is well-known, by an explicit calculation, see \eg{} \cite{Markov1917}
(where $N=2$),
that this sequence is exchangeable. By de Finetti's theorem it can thus
can be seen
as an \iid{} sequence of colours with a random distribution $\hP$, which
equals the asymptotic colour distribution. Moreover, it is well-known
\cite{JohnsonKotz}
(see also \cite{Markov1917} and \cite{Polya} for $N=2$)
that this asymptotic distribution is a symmetric Dirichlet distribution 
$\DIR(\ga/N,\dots,\ga/N)$, with the density function $c\prod x_i^{\ga/N-1}$
on the $(N-1)$-dimensional simplex 
$\set{(x_1,\dots,x_N)\in\bbRp^N:\sum_i x_i=1$}.
Consequently, the multigraph process $\GG_N$ can be obtained by the rank 1
model in \refE{Erank1} with the random probability distribution
$\DIR(\ga/N,\dots,\ga/N)$. 

Alternatively, by \refT{Tconfig}, $\GGm$ may be obtained by the
configuration model, with vertex degrees given by the first $2m$ draws in
the \Polya{} urn process described above.

See further \cite{SJ-Warnke}.
\end{example}

\subsection{Rank 1 simple graphs}\label{Srank1simple}
We will in the following sections
study several examples of the simple random graphs $\Gm$
in the rank 1 case.
We note here  a few general formulas.
We ignore the trivial case when the probability distribution \set{q_i} 
is supported on one point. (Then $\Gt$ and $\Gm$ have only a single vertex
and no edges.
In fact, the interesting case is when the support of \set{q_i} is infinite.)
We thus assume $\max q_i<1$. 

Since we ignore loops when constructing the simple graphs $\Gt$ and $\Gm$,
we modify \eqref{muij1} by taking $\mu_{ii}=0$, see \refR{Rloop1}; 
this changes \eqref{mui1} to
$\mu_i=2q_i-2q_i^2$, but we still have $\mu_i\asymp q_i$.
Thus \eqref{vt} and \eqref{et} yield
\begin{align}
  v(t) &\asymp \sum_i \bigpar{1\land(tq_i)},\label{vt1}\\
  e(t) &\asymp \sum_{i\neq j} \bigpar{1\land(tq_iq_j)}
.\label{et1}
\end{align}
Moreover, adding the diagonal terms to the sum in \eqref{et1} does not harm,
since if we assume as we may that $q_1,q_2>0$, then $q_i^2=O(q_1q_i)$ and
$q_1^2=O(q_1q_2)$, and thus 
$\sum_{i} \bigpar{1\land(tq_i^2)} =O\bigpar{\sum_{i>1} \bigpar{1\land(tq_1q_i)}}
=O\bigpar{e(t)}$. Hence also 
\begin{align}
  e(t) &
\asymp \sum_{i, j} \bigpar{1\land(tq_iq_j)}
\asymp \sum_i v(tq_i)
.\label{et11}
\end{align}
Note that although we are interested in large $t$, the argument $tq_i$ in
\eqref{et11} is small for large $i$, so \eqref{et11} requires that we
consider $v(t)$ for both large and small $t$.

Similarly, the expected degree of vertex $i$ in $\Gt$ is
\begin{equation}\label{ED1}
  \begin{split}
  \E D_i &= \sum_{j\neq i} \bigpar{1-e^{-2tq_iq_j}}
\asymp \sum_{j\neq i} \bigpar{1\land \xpar{tq_iq_j}}
\asymp \sum_{j} \bigpar{1\land \xpar{tq_iq_j}}
\asymp v(tq_i).    
  \end{split}
\end{equation}

\section{Dense examples}\label{Sdense}

We may obtain  examples where $\Gm$ and $\Gt$ are dense 
by letting $\mu_{ij}$ decrease very rapidly.

We begin with an extreme case, which gives complete graphs.

\begin{example}[Complete graphs]\label{Ecomplete}
Let $\mu=(\mu\xij)$ be such that
for every \hbox{$k\ge2$}
\begin{equation}
  \label{d1}
0<
\sup_{\ell\ge1}\mu_{k+1,\ell} \le k^{-4}\min_{i<k}\mu_{k,i}.
\end{equation}
(This is not best possible, and may easily be improved somewhat,
but we only want to give a class of examples. To
find necessary and sufficient conditions for $\Gm$ to be (almost) complete 
for large $m$
is an open
problem.) 

For example, we may take $\mu\xij=\xpar{(i\lor j)!}^{-4}$,
or the rank 1 example $\mu\xij=q_iq_j$ with $q_i=\exp\xpar{-3^i}$.

Define $a_i:=\sup_{j}\mu_{ij}$. 
Then \eqref{d1} implies, for every $k\ge2$,
\begin{equation}\label{d2}
a_{k+1}\le k^{-4}\mu_{k1}\le k^{-4} a_k. 
\end{equation}
In particular, for $k\ge2$, $a_{k+1}\le \frac12a_k$.
Moreover, $(k-1)^4a_k \ge  a_{k}\ge k^4a_{k+1}$; hence the sequence
$k^4a_{k+1}$ is decreasing for $k\ge1$.

Define $t_n:=(n^3 a_{n+1})\qw$. 
Let $Y_n$ be the number of edges in $\GG_{t_n}$ with at least one endpoint
outside $[n]$.
Then, since \eqref{d2} implies that $(k+1)a_{k+1}\le \frac12ka_k$ when $k\ge2$,
\begin{equation}
  \begin{split}
	\E Y_n =\sum_{k\ge n+1}\sum_{i<k} t_n\mu_{ki}
<t_n\sum_{k\ge n+1}k a_k
\le 2t_n (n+1) a_{n+1}
=\frac{2(n+1)}{n^3}.
  \end{split}
\end{equation}
Consequently, by Markov's inequality and the Borel--Cantelli lemma, \as{}
$Y_n=0$ for all large $n$.

On the other hand, if $Z_n$ is the number of pairs $(i,j)$ with $i<j\le n$
such that $ij$ is not an edge of $\GG_{t_n}$, \ie, $N_{ij}(t_n)=0$, then
\begin{equation}\label{d4}
  \E Z_n = \sum_{i<j\le n} \P(N_{ij}(t_n)=0)
= \sum_{i<j\le n} e^{-t_n\mu_{ij}}.
\end{equation}
Moreover, if $i<j\le n$, then by \eqref{d1} and \eqref{d2},
$\mu\xij\ge j^4 a_{j+1}\ge n^4a_{n+1}$ and thus
$t_n\mu\xij\ge t_n n^4a_{n+1}=n$.
Hence, \eqref{d4} yields $\E Z_n \le \binom n2 e^{-n}$,
and we see, by the Borel--Cantelli lemma again, 
that \as{} also $Z_n=0$ for all large $n$.

We have shown that \as{} for all large $n$, $\GG_{t_n}$ contains at least
one edge $ij$ whenever $i<j\le n$, but no other edges; in other words, the
simple graph $\G_{t_n}$ is the complete graph $K_n$.
Since $K_n$ has $\binom n2$ edges, this also means that $G_{\binom n2}=K_n$.

We have shown that \as{}, for all large $m$, $G_m$ is the complete graph
$K_n$ if $m=\binom n2$; since $G_n$ is an increasing sequence of graphs, 
it follows that for intermediate values $m=\binom n2+\ell$, $1\le \ell<n$,
$G_m$ consist of $K_n$ plus an additional vertex joined to $\ell$ of the
other vertices. We thus have a complete description of the process $(\Gm)$
for large $m$. (And thus also of the process $\Gt$.)

In particular, for all large $m$, $\Gm$ differs from the complete graph
$K_n$ with $n=v(\Gm)$ by less than $n$ edges, and thus,
see \refS{Slim},
$\dcut(\Gm,K_n)\le\norml{W_{\Gm}-W_{K_n}}\le 2/n=o(1)$.
It follows that in the sense of graph limit theory,
$\Gm\to \gG_1$ \as, where $\gG_1$ is the graph limit defined as the limit of 
the complete graphs,
which is the graph limit defined by the constant
graphon $W_1(x,y)=1$ (on any probability space
$\gO$).
\end{example}

Here is another example, where the limit is less trivial.

\begin{example}\label{Egeo}
Consider a rank 1 example $\mu\xij=q_iq_j$, $i\neq j$,
where $q_i$ has a geometric decay $q_i\asymp  b^{-i}$ for some 
$b>1$.

Let $n\ge 1$ and suppose $b^n\le t\le b^{n+1}$.
Then the 
expected number of edges
$ij$ in $\Gt$ with $i+j>n$ is
at most, 
with $C:=\sup_i b^iq_i<\infty$ and letting $\ell=i+j$, 
\begin{equation}
\sum_{i+j>n}t q_iq_j
\le t\sum_{i+j>n} C^2 b^{-i}b^{-j}
\le
  b^{n+1}\sum_{\ell\ge n+1}\ell C^2 b^{-l}
=O(n).
\end{equation}
Similarly, the
 expected number of edges $ij$ with $i+j\le n$ \emph{not} in $\Gt$
is at most, for $c:=\inf_i b^iq_i>0$,
\begin{equation}
  \begin{split}
\sum_{i+j\le n}\exp\xpar{-2t q_iq_j}
&\le
  \sum_{2\le \ell\le n}\sum_{i=1}^{\ell-1} \exp\bigpar{-tc^2b^{-\ell}}
\le n
  \sum_{2\le \ell\le n} \exp\bigpar{- c^2b^{n-\ell}}
\\&
=O(n).    
  \end{split}
\end{equation}
Moreover, the same argument shows that the expected number of edges $ij$ in
$\Gt$ with $i+j>n+n^{0.1}$ 
and the number of non-edges $ij$ with $i+j<n-n^{0.1}$ both are 
$O(n b^{-n^{0.1}})$; hence the Borel--Cantelli lemma shows that \as{}
for every large $n$ and every $t\in[b^n,b^{n+1}]$, 
$\Gt$ contains every edge with $i+j< n-n^{0.1}$ and 
no edge with $i+j> n+n^{0.1}$; a consequence, we also have
$[n-n^{0.1}-1]\subseteq V(\Gt)\subseteq[n+n^{0.1}]$.
It follows that
if $H_n$ is the graph with vertex set 
\set{1,\dots,n} and edge set \set{ij:i+j\le n},
then \as{} the cutdistance $\dcut(\Gt,H_n)=o(1)$,
when $b^n\le t\le b^{n+1}$.
As \ntoo, 
$H_n\to\gGq$, the graph limit defined by the graphon
$W(x,y)=\indic{x+y\le1}$ on 
$\oi$ (known as the ``half-graphon'').
Consequently, 
$\G_{t}\to\gGq$ \as{} as \ttoo.
By \refP{Pt}, $G_m\to\gGq$ \as{} as \mtoo.
\end{example}

\begin{example}  \label{Egeo'} 
\refE{Egeo} can be generalized without difficulty.
Consider, for example, a rank 1 case $\mu\xij=q_iq_j$ with 
\begin{equation}
  \label{egeo'}
q_i=\exp\bigpar{-ci+O(i^{1-\eps})}
\end{equation}
for some constants $c>0$ and $\eps>0$.
Arguing as in \refL{Egeo} we see that \as, for every large $n$ and all
$t\in[e^{cn},e^{c(n+1)}]$, 
$\Gt$ contains all edges $ij$ with $i+j<n-n^{1-\eps/2}$ and no
edges $ij$ with $i+j>n+n^{1-\eps/2}$. Consequently, \as,
$\dcut(\Gt,H_n)=o(1)$ and thus $\Gt\to\gGq$ as \ttoo{} and $\Gm\to\gGq$ as
\mtoo. 
\end{example}

\begin{example}\label{Ehollywood0}
Consider the simple graphs $\Gt$ and $\Gm$ given by the Hollywood model in
\refE{Ehollywood} in the case $\ga=0$. As shown there, the resulting random
graphs are the same as the ones given by the rank 1 model with a random
probability distribution $(q_i)_1^\infty$ having the distribution
$\GEM(0,\gth)$, where $\gth\in(0,\infty)$ is a parameter.

By a well-known characterization of the $\GEM$ distribution, see
\cite[Theorem 3.2]{Pitman}, this means that
\begin{equation}
  (q_1,q_2,\dots) = \bigpar{(1-X_1),X_1(1-X_2),X_1X_2(1-X_3),\dots},
\end{equation}
where $X_i\sim\Beta(\gth,1)$ are i.i.d. 
In other words, $q_i=(1-X_i)\prod_{j=1}^{i-1}X_j$, and thus
\begin{equation}
  \log( q_i) = \log(1-X_i) + \sum_{j=1}^{i-1}\log(X_j).
\end{equation}
Hence, by the law of iterated logarithm, \as{}
\begin{equation}
  \log(q_i) = -ci + O\bigpar{\sqrt{i\log\log i}}=-ci+O\bigpar{i^{0.6}},
\end{equation}
where $c:=-\E\log(X_1)=1/\gth$. 
Hence, by conditioning on
$(q_i)_1^\infty$, 
\refE{Egeo'} applies.
Consequently,
$\Gm\to\gGq$ \as{} as \mtoo{} for the Hollywood model with $\ga=0$ and any
$\gth>0$. 
\end{example}

\begin{example}\label{Esuper}
For another generalization of \refE{Egeo}, 
consider the rank 1 case with $q_i\asymp \exp\bigpar{-ci^\gam}$ for some
$c>0$ and 
$\gam>0$. It follows by a similar argument that \as{} $\Gm\to W$, where $W$
is the graphon $\indic{x^\gam+y^\gam\le 1}$ on $\oi$.
\end{example}

In Examples \ref{Ecomplete}--\ref{Esuper}, $\Gm$ converges \as{} to some
graph limit. 
There are also many examples, see \eg{} Sections
\ref{Ssparse}--\ref{SXsparse}, for which $\Gm$ are sparse, which is
equivalent to $\Gm\to\gG_0$, the zero 
graph limit defined by the graphon $W(x,y)=0$.
In fact, any graph limit can occur as a limit of $\Gm$, 
at least along a subsequence. Moreover, 
the following result shows that there exists
a ``chameleon'' example where every graph limit occurs as the limit of some
subsequence. 
(Note that this includes that there is a subsequence converging to the zero
graph limit $\gG_0$, 
which means that $e(G_m)=o(v(G_m)^2)$ along this subsequences; hence this
example is neither dense nor sparse.)

\begin{theorem}\label{Tchameleon}
  There exists a matrix $\mu=(\mu\xij)$ such that \as{}
the graphs $\Gm$ are dense in the space of graph limits, in the sense that
for every graph limit $\gG$, there exists a subsequence $G_{m_\ell}$ that
converges to $\gG$.
\end{theorem}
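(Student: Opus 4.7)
The plan is to build $\mu$ blockwise so that, at a carefully chosen sequence of times, $\G_t$ essentially coincides with a blow-up of a prescribed finite target graph, and then to let these targets exhaust a countable dense family in $\cW$. Since $\cW$ is a compact metric space in which the graphon classes of finite graphs without isolated vertices are dense, fix a sequence $(H_k)_{k\ge1}$ of such graphs whose graphon classes form a dense set in $\cW$, and let $(K_\ell)_{\ell\ge1}$ be a sequence in which each $H_k$ appears infinitely often; write $q_\ell:=v(K_\ell)$.

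Partition $\bbN$ into consecutive blocks $B_\ell$ of size $n_\ell$ with $q_\ell\mid n_\ell$, the $n_\ell$ growing so fast that $n_1+\dots+n_{\ell-1}=o(n_\ell)$. Subdivide $B_\ell$ into $q_\ell$ equal parts $B_{\ell,1},\dots,B_{\ell,q_\ell}$ and set
\begin{equation*}
\mu_{ij}:=\alpha_\ell\,\indic{ab\in E(K_\ell)}\qquad\text{when }i\in B_{\ell,a},\;j\in B_{\ell,b},\;i\neq j,
\end{equation*}
and $\mu_{ij}:=0$ whenever $i$ and $j$ lie in different blocks. Pick times $t_\ell:=C\alpha_\ell^{-1}\log n_\ell$ with $C$ a large constant, and choose the weights $\alpha_\ell>0$ to decay super-exponentially in $\ell$, with $\sum_\ell \alpha_\ell n_\ell^2<\infty$ (so $\norm\mu<\infty$) and such that $\sum_{\ell'>\ell}n_{\ell'}^2\alpha_{\ell'}t_\ell$ is summable in $\ell$; concretely $n_\ell:=q_\ell\cdot 2^{2^\ell}$ with $\alpha_\ell:=q_\ell^{-2}\cdot 2^{-5\cdot 2^\ell}$ will do. The independence of the edge-Poisson processes (\refR{Rpo}) together with the Borel--Cantelli lemma, applied as in \refE{Ecomplete}, then yields that a.s.\ for every sufficiently large $\ell$, on one hand every edge $ij$ with $\mu_{ij}>0$ in any block $B_{\ell'}$, $\ell'\le\ell$, is already present in $\G_{t_\ell}$, and on the other hand no edge touching a block $B_{\ell'}$ with $\ell'>\ell$ is yet present.

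Granting this, a.s.\ for all large $\ell$ the graph $\G_{t_\ell}$ is the disjoint union, over $\ell'\le\ell$, of $(n_{\ell'}/q_{\ell'})$-fold blow-ups of $K_{\ell'}$ on the vertex set $B_1\cup\dots\cup B_\ell$ (no vertex is lost, since each $K_{\ell'}$ has no isolated vertex). By the choice of $n_\ell$, block $B_\ell$ carries a $1-o(1)$ fraction of all vertices, and the restriction of $W_{\G_{t_\ell}}$ to $B_\ell^2$ is equivalent as a graphon to $W_{K_\ell}$ by \refR{Rblowup}; the remaining vertex-measure is $o(1)$ and therefore contributes $o(1)$ to the cut norm. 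Hence $\dcut(\G_{t_\ell},W_{K_\ell})\to0$ a.s.; taking $m_\ell:=N(t_\ell)$ in \refP{Pt} gives $\G_{m_\ell}=\G_{t_\ell}$, and for each fixed $k$ the subsequence of $\ell$ with $K_\ell=H_k$ produces a subsequence of $(\Gm)$ converging in cut distance to $W_{H_k}$. The set of subsequential limits of $(\Gm)$ in $\cW$ is closed and contains the dense family $\{W_{H_k}\}_{k\ge1}$, so it is all of $\cW$. The main delicate point is the parameter engineering that simultaneously makes block $\ell+1$ invisible and block $\ell$ completely saturated at the single time $t_\ell$ while keeping $\norm\mu$ finite --- an amplification of the mechanism used in \refE{Ecomplete}.
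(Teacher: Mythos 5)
Your proof is correct, and it rests on the same engine as the paper's: super\-/exponential separation of the scales of $\mu$, a Borel--Cantelli argument at designated times $t_\ell$ showing that the graph there is exactly a blow-up of a prescribed finite target up to a negligible remainder, and the density in $\cW$ of finite graphs without isolated vertices, each repeated infinitely often in the enumeration. The genuine difference is the decomposition. The paper uses \emph{nested shells}: the $k$-th target $F_k$ is blown up on the entire initial segment $[\ellN_k]$, with the $k$-th intensities assigned only to pairs whose larger coordinate lies in $(\ellN_{k-1},\ellN_k]$; at time $t_k$ the graph $\G_{t_k}$ and the blow-up $F_k^*$ then have the \emph{same} vertex set and can disagree only on the corner $I_{k-1}^2$ of relative measure $(kv_k)^{-2}$, so the cut-distance bound is an immediate $L^1$ estimate on a fixed probability space. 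You use \emph{disjoint blocks} with $\mu$ vanishing between them, so $\G_{t_\ell}$ is a disjoint union of blow-ups of all targets so far, dominated in vertex measure by the newest block. That is conceptually cleaner (blocks never interact), but it makes the error an $o(1)$ fraction of the \emph{vertices} rather than of the \emph{pairs}, and your step ``the remaining vertex-measure is $o(1)$ and therefore contributes $o(1)$ to the cut norm'' silently uses the standard but not-quite-free fact that deleting a set of vertices of relative measure $\eps$ (with all incident edges) changes $\dcut$ by $O(\eps)$ --- this involves renormalizing the underlying probability measure, not just an $L^1$ bound; the paper's nested design avoids it by construction. A related caveat: the density in $\cW$ of graphs \emph{without isolated vertices} is true but also needs a word, since deleting isolated vertices is not a small perturbation in $\dcut$ (same renormalization issue); the paper instead adds a star at one vertex of the approximating graph $\xxH$, at $L^1$ cost $2/v(\xxH)$. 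Your parameter choices do work: $\norm\mu<\infty$, all blocks up to $\ell$ are a.s.\ saturated at $t_\ell$ for large $\ell$ (monotonicity of the process handles the earlier blocks), and later blocks are a.s.\ empty. One notational slip: with $m_\ell:=N(t_\ell)$, \refP{Pt} gives $\Gq_{m_\ell}=\G_{t_\ell}$, not $\G_{m_\ell}$. Your closing observation that the set of subsequential limits is closed and contains a dense set is a tidy way to finish that the paper does not state explicitly.
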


\begin{proof}
  Let $F_k$, $k\ge1$, be an enumeration of all finite (unlabelled) simple
  graphs without isolated vertices, each repeated an infinite number of
  times.
Let $v_k:=v(F_k)$
and let $f_k(i,j)$ be the adjacency matrix of $F_k$.

Let  $\ellN_0:=1$ and, inductively,
$\ellN_k:= k v_k \ellN_{k-1}$ for $k\ge1$.
Let also
\begin{equation}\label{ak}
  a_k:=\prod_{j=1}^k \ellN_j^{-4}.
\end{equation}
Clearly, $\ellN_k\ge k!$, $\ellN_k\ge \ellN_{k-1}$ and $a_k\le a_{k-1}$.
Finally let, for $i\neq j$,
\begin{equation}\label{muk}
  \mu\xij = a_k f_k\Bigpar{\Bigceil{\frac{i}{k\ellN_{k-1}}},
\Bigceil{\frac{j}{k\ellN_{k-1}}}},
\qquad \ellN_{k-1}< i \lor j \le \ellN_k.
\end{equation}
Let $I_k:=[1,\ellN_k]$ and divide $I_k$ into the $v_k$ subintervals 
$I_{k,\ell}:=[(\ell-1)k\ellN_{k-1}+1,\ell k\ellN_{k-1}]$, $\ell=1,\dots v_k$.
Note that \eqref{muk} says that if $i\in I_{k,p}$ and $j\in I_{k,q}$ and not
both $i,j\in I_{k-1}$, then $\mu\xij=a_k f_k(p,q)$.

Let $t_k:=\ellN_k a_k\qw$.
If $n>k$, then the expected number of edges $ij$ in $\GG_{t_k}$ with $i\lor
j\in I_n\setminus I_{n-1}$ is at most, using \eqref{ak},
\begin{equation}
  t_k\sum_{i\lor j\in I_n\setminus I_{n-1}}\mu\xij
\le t_ka_n \ellN_n^2
\le t_k \frac{a_{n-1}}{\ellN_{n}^{2}}
=\frac{a_{n-1}}{a_k}\frac{ \ellN_k}{\ellN_{n}^{2}}
\le \frac{1}{\ellN_{n}}\le\frac{1}{n!}.
\end{equation}
Hence the probability that $\GG_{t_k}$ contains some edge with endpoint not
in $I_k\times I_k$ is at most
\begin{equation}
  \sum_{n>k}\frac{1}{n!}\le \frac{1}{k!},
\end{equation}
and by the Borel--Cantelli lemma, \as{} this happens for only finitely many
$k$.

Similarly, if $(i,j)\in I_k^2\setminus I_{k-1}^2$, then
$\mu\xij\in\set{0,a_k}$, and the probability that there exists some such
pair $(i,j)$ with $\mu\xij=a_k$ but $N\xij(t_k)=0$ is at most
\begin{equation}
 \ellN_k^2 e^{-t_ka_k} =\ellN_k^2 e^{-\ellN_k} =O\bigpar{\ellN_k\qw}=O\bigpar{k!\qw}.
\end{equation}
Consequently,
again by the Borel--Cantelli lemma, \as{} for every large $k$, there exists
no such pair $(i,j)$.

We have shown that \as{} for every large $k$, the simple graph $\G_{t_k}$
contains no edge with an endpoint outside $I_k$, and for 
$(i,j)\in I_k^2\setminus I_{k-1}^2$,
recalling \eqref{muk},
if $i\in I_{k,p}$ and $j\in I_{k,q}$, then there is an edge $ij$ if and only
if $f_k(p,q)=1$. 
In particular, since $F_k$ has no isolated vertices, every $i\in I_k$ is the
endpoint of some edge in $\G_{t_k}$ and thus a vertex, but no $i\notin I_k$
is;
in other words, \as{} for every large $k$,
$V(\G_{t_k})=I_k$.
It follows that if $F_k^*$ is the blow-up of $F_k$ with
every vertex repeated $kN_{k-1}$ times, then \as{} for every large $k$,
the graphs $\G_{t_k}$ and $H^*_k$ have the same vertex set $I_k$ and their 
adjacency matrices can differ only for $(i,j)\in I_{k-1}^2$.
Consequently, using \refR{Rblowup},
\begin{equation}\label{qb}
  \begin{split}
  \dcut(\G_{t_k},F_k)
&=
  \dcut(\G_{t_k},F_k^*)
\le   \cn{W_{\G_{t_k}}-W_{F_k^*}}
\le   \norml{W_{\G_{t_k}}-W_{F_k^*}}
\\&
\le \frac{N_{k-1}^2}{N_k^2}
=\frac1{(kv_k)^2}
\le k^{-2},
  \end{split}
\end{equation}
\as{} for all large $k$.

Now, let $\gG$ be a graph limit, and let $\ell\ge1$.
By graph limit theory (or definition), there exists a sequence of graphs
$\xxH_j$ with $v(\xxH_j)\to\infty$ and $\dcut(\xxH_j,\gG)\to0$ as $j\to\infty$; 
hence we may
take $j$ so large that $\xxH:=\xxH_j$ satisfies $v(\xxH)>\ell$ and
$\dcut(\xxH,\gG)<1/\ell$. $\xxH$ may have isolated vertices, so we define
$\xxH'$ by  
choosing a vertex $v\in \xxH$ and adding an edge from $v$ to any other vertex
in $\xxH$. Then at most $v(\xxH)-1$ edges are added, and thus, similarly to
\eqref{qb}, 
  \begin{equation}\label{qc}
  \begin{split}
  \dcut(\xxH',\xxH)
\le   \cn{W_{\xxH'}-W_{\xxH}}
\le   \norml{W_{\xxH'}-W_{\xxH}}
\le \frac{2 v(\xxH)}{v(\xxH)^2}
<\frac{2}\ell.
  \end{split}
\end{equation}
Moreover, $\xxH'$ has no isolated vertices, and thus $\xxH'$ occurs infinitely
often in the sequence $(F_k)$ above.
Consequently, a.s., there exists $k>\ell$ such that \eqref{qb} holds and
$F_k=\xxH'$. Then, by \eqref{qb} and \eqref{qc},
\begin{equation}
  \dcut(\G_{t_k},\gG)
\le\dcut(\G_{t_k},F_k)+\dcut(\xxH',\xxH)+\dcut(\xxH,\gG)
<\frac{1}{k^{2}}+\frac{2}\ell+\frac{1}\ell<\frac{4}\ell.
\end{equation}
By \refP{Pt}, this means that if $m_\ell:=N(t_k)$, then
$\dcut(G_{m_\ell},\gG)<4/\ell$. 
Since $\ell$ is arbitrary, this completes the proof.
(We may choose $m_\ell$ inductively, and choose $k$ above so large that
$m_\ell>m_{\ell-1}$.) 
\end{proof}

The chameleon example in \refT{Tchameleon} is theoretically very
interesting, but it is hardly useful as a model in applications; since the
behaviour of $\Gm$ 
changes so completely with $m$, it is a model of nothing rather than a model 
of everything.

If we want convergence of the full sequence $\Gm$ and not just 
subsequence convergence as in \refT{Tchameleon}, 
we do not know whether any graph limit can occur as a limit.

\begin{problem}
  For which graph limits $\gG$ does there exist a matrix $(\mu\xij)$ such
  that for the corresponding simple random graphs, $\Gm\to\gG$?
\end{problem}

\section{Sparse examples}\label{Ssparse}

We gave in the preceding section some dense examples. It seems to be more
typical, however, that the graph $\Gm$ contains many vertices of small
degree (maybe even degree 1), and that the graph  is sparse.
We give here a few, related, rank 1 examples; see also the following section.

\begin{example}\label{Epower}
Consider a rank 1 example with $q_i\asymp i^{-\gam}$  for some $\gam>1$.
Then \eqref{vt1} yields
\begin{equation}\label{Epv}
  v(t)\asymp\sum_{i\ge1}\bigpar{1\land(ti^{-\gam})}
=\sum_{i\le t^{1/\gam}} 1 + t\sum_{i> t^{1/\gam}} i^{-\gam}
\asymp 
\begin{cases}
t^{1/\gam},&t\ge1,  \\
t,&t<1.
\end{cases}
\end{equation}
This yields by \eqref{et11}, for $t\ge2$,
\begin{equation}
  \begin{split}
  e(t)\asymp \sum_{i\ge1}v(tq_i)
\asymp \sum_{i\ge1}v(ti^{-\gam})
\asymp \sum_{i\le t^{1/\gam}} t^{1/\gam}i^{-1} + \sum_{i> t^{1/\gam}}
ti^{-\gam}
\asymp t^{1/\gam}\log t.    
  \end{split}
\end{equation}
Hence, using \refT{Too}, \as{}
$v(\Gt)\asymp t^{1/\gam}$ and $e(\Gt)\asymp t^{1/\gam}\log t$ as \ttoo,
and
$v(\Gm)\asymp m^{1/\gam}$ and $e(\Gm)\asymp m^{1/\gam}\log m$ as \mtoo.
It follows that the average degree in $\Gm$ is $\asymp \log m$.
\end{example}

In this example we may also show that the degree distribution has a
power-law; we state this as a theorem. 
There is no standard precise definition of what is meant by a power-law
degree distribution; we may say that a
random variable $X$ has a power-law distribution with exponent $\tau$ if
$\P(X>x)\asymp x^{-(\tau-1)}$ as \xtoo, but this does not make sense for the
degree distribution of a finite graph,
so we must either consider the
asymptotic degree distribution, provided one exists, or give uniform
estimates for a suitable range of $x$. 
(See \eg{} \cite[Sections 1.4.1 and 1.7]{RemcoI} for a discussion of
power-laws for degree distributions.)
We follow here the second possibility.

For a (finite) graph $G$, let $v\gek(G)$ be the number of vertices of
degree at least $k$, and let $\pi\gek(G):=v\gek(G)/v(G)$, the probability that
a random vertex has degree $\ge k$.

\begin{theorem}\label{Tpower}
In \refE{Epower}, the random graphs $\Gm$ have a power-law distribution
with exponent 2 in the following sense.
There exist positive constants $c$ and $C$ such that \as{} for every large $m$,
\begin{align}
 && \pi\gek(\Gm) &\le C/k, && 1\le k<\infty, &&\label{pow<}
  \\
&&  \pi\gek(\Gm) &\ge c/k, && 1\le k\le c v(G_m).&&\label{pow>}
\end{align}
\end{theorem}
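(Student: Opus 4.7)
The strategy is to establish the statement for the Poisson version $G_t$ first, then transfer to $G_m = G_{\tau_m}$ via \refP{Pt} and \refP{Ptfin} (recall $\tau_m/m \asto \|\mu\|\qw$). The key fact is that $D_i(G_t) = \sum_{j\neq i}\indic{N_{ij}(t)\ge 1}$ is a sum of \emph{independent} Bernoullis, with mean, by \eqref{ED1} and \eqref{Epv},
\[
\lambda_i(t) := \E D_i(G_t) \asymp v(tq_i) \asymp
\begin{cases} t^{1/\gamma}/i, & i \le c_1 t^{1/\gamma}, \\ t/i^{\gamma}, & i > c_1 t^{1/\gamma}. \end{cases}
\]
The target is $v_{\ge k}(G_t)\asymp t^{1/\gamma}/k$ a.s., uniformly in $1\le k\le c_0 t^{1/\gamma}$; combined with $v(G_t)\asymp t^{1/\gamma}$ from \refE{Epower}, this yields the theorem upon transfer.

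First I would bound $\E v_{\ge k}(G_t)$. Chernoff on the Bernoulli sum $D_i$ gives $\P(D_i\ge k)\le (e\lambda_i/k)^k$ for $k\ge \lambda_i$ and $\P(D_i\ge k)\ge 1/2$ for $\lambda_i\ge 2k$. Since the set $\{i:\lambda_i\ge 2k\}$ has size $\asymp t^{1/\gamma}/k$, this gives $\E v_{\ge k}(G_t)\gtrsim t^{1/\gamma}/k$. For the matching upper bound, split $\sum_i \P(D_i\ge k)$ at $\lambda_i=k/2$: the $O(t^{1/\gamma}/k)$ vertices with $\lambda_i\ge k/2$ contribute trivially, while a dyadic decomposition over ranges $\lambda_i\in[2^{-n}k,2^{-n+1}k)$ combined with the Chernoff tail $(e\lambda_i/k)^k$ shows the remaining contribution is also $O(t^{1/\gamma}/k)$.

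Second, I need sufficiently strong concentration of $v_{\ge k}(G_t)$ to invoke Borel--Cantelli. The structural observation is that, for $i\neq i'$, the indicators $\indic{D_i\ge k}$ and $\indic{D_{i'}\ge k}$ depend only on the single shared edge $ii'$. Conditioning on $\indic{N_{ii'}(t)\ge 1}$ gives the exact covariance
\[
\Cov\bigl(\indic{D_i\ge k},\indic{D_{i'}\ge k}\bigr) = p_{ii'}(1-p_{ii'})\,\P(D_i'=k-1)\,\P(D_{i'}'=k-1),
\]
where $D_i':=D_i-\indic{N_{ii'}(t)\ge 1}$ and $p_{ii'}:=1-e^{-tq_iq_{i'}}$; combined with a local-CLT-type estimate $\P(D_i=k-1)\lesssim (1+\sqrt{\lambda_i})\qw$, this yields $\Var(v_{\ge k}(G_t))\lesssim t^{1/\gamma}/k$. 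A cleaner alternative is a truncation: writing $D_i = \sum_{j>M}\indic{N_{ij}(t)\ge 1} + \sum_{j\le M,\,j\neq i}\indic{N_{ij}(t)\ge 1}$, the ``long-range'' parts are fully \emph{independent} across $i\le M$ (disjoint sets of Bernoullis) and the second part is a bounded error of size $\le M-1$; a judicious choice of $M$ reduces the count of high-degree vertices to a sum of independent indicators, to which classical exponential concentration applies. Exponential concentration combined with Borel--Cantelli along $t_n=2^n$ and $k_j=2^j$ (with $1\le k_j\le c_0 t_n^{1/\gamma}$), plus monotonicity of $v_{\ge k}(G_t)$ in $t$ (increasing) and $k$ (decreasing), then sandwiches the behavior and gives the a.s.\ uniform bound $v_{\ge k}(G_t)\asymp t^{1/\gamma}/k$ for all large $t$.

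Transferring from $G_t$ to $G_m=G_{\tau_m}$ via $\tau_m\asymp m$ a.s.\ gives $v_{\ge k}(G_m)\asymp m^{1/\gamma}/k$ a.s.\ in the relevant range, and combined with $v(G_m)\asymp m^{1/\gamma}$ this yields \eqref{pow<}--\eqref{pow>} for $1\le k\le c\, v(G_m)$; outside this range, \eqref{pow<} is trivial (either $\pi_{\ge k}\le 1\le C/k$ for bounded $k$, or $\pi_{\ge k}=0$ when $k>v(G_m)$). \textbf{The main obstacle} is obtaining \emph{exponential} (not merely Chebyshev) concentration for $v_{\ge k}(G_t)$: because the indicators are dependent, classical Bernstein does not apply directly, and the failure probability must be small enough for Borel--Cantelli over \emph{all} relevant $k$ simultaneously---including $k$ comparable to $v(G_m)$, where $\E v_{\ge k}$ is only $O(\log t)$. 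The truncation alternative is cleaner but requires choosing $M$ uniformly across a wide range of $(k,t)$ so that the small--small correction is genuinely negligible compared to $k$; this balancing is the real technical heart of the proof.
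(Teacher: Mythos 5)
Your overall skeleton --- Poissonize, exploit the independence of the edge indicators $I\xij$ in $\Gt$, aim for $v\gek(\Gt)\asymp t^{1/\gam}/k$ uniformly via Chernoff bounds and Borel--Cantelli along dyadic times, then transfer to $\Gm$ via $\tau_m$ --- is exactly the paper's strategy, and your expectation estimates and the exact covariance formula for the degree indicators are correct. But the proposal has a genuine gap precisely where you flag it: the concentration step is not closed, and neither of your sketched alternatives works as stated. The covariance route gives only $\Var(v\gek(\Gt))\lesssim \E v\gek(\Gt)\asymp t^{1/\gam}/k$, and Chebyshev then yields failure probability of order $k/t^{1/\gam}$ for a fixed relative deviation; summed over dyadic $k\le c_0t^{1/\gam}$ this is $O(1)$ and not summable over $t=2^\ell$, so Borel--Cantelli fails exactly in the regime $k\asymp t^{1/\gam}$ that you single out. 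The truncation route is left as a balancing act you do not carry out.

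The paper closes this gap with two separate devices, one per bound, and notably does \emph{not} need exponential concentration for the upper bound. For \eqref{pow<} it never shows that $v\gek(\Gt)$ concentrates around its mean: it writes $v\gek(\Gt)\le \NNx+i_0$ with $i_0:=At^{1/\gam}/k$ deterministic and already of the target order, where $\NNx$ counts only the indices $i>i_0$ with $D_i\ge k$. For those indices $\E D_i\le k/14$, so a Chernoff bound plus a fourth-factorial-moment/Markov bound give $\E \NNx=O(t^{1/\gam}/k^4)$ --- three powers of $k$ below the target --- and the shared-edge covariance bound gives $\Var \NNx=O(t^{1/\gam}/k^4)$ as well. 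Chebyshev applied to the deviation $t^{1/\gam}/k$, which is enormous compared with $\E\NNx$, then has failure probability $O(t^{-1/\gam}k^{-2})$, summable over all $k$ and all $t=2^\ell$ simultaneously. For \eqref{pow>} the paper manufactures genuine independence instead of fighting the dependence: it restricts to $i$ in the set $L$ of odd integers up to $i_1=B\qw t^{1/\gam}/k$ and counts only edges into the set $R$ of even integers up to $6k$; the truncated degrees $D_i':=\sum_{j\in R}I\xij$ are then independent across $i\in L$, each $tq_iq_j\ge1$ on $L\times R$ so $\P(D_i'<k)\le e^{-k}$ by Chernoff, and a second Chernoff bound on the independent indicators $\indic{D_i'\ge k}$ gives failure probability $e^{-ct^{1/2\gam}}$. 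If you adopt these two devices --- deterministic absorption of the first $O(t^{1/\gam}/k)$ indices for the upper bound, and a disjoint-index restriction for the lower bound --- your outline becomes a complete proof.
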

As usual, the same result holds for $\Gt$.
Note that the restriction $k\le cv(\Gm)$ in \eqref{pow>} is necessary, and
best possible (up to the value of the constants); we necessarily have
$\pi\gek(G)=0$ when $k\ge v(G)$.
Note also that we have the same exponent $\tau=2$ for every $\gam>1$.

\begin{proof}\CCreset\ccreset
As usual, we prove the results for $\Gt$; the results for $\Gm$  follow
by \refP{Pt}. We then can write \eqref{pow<}--\eqref{pow>} as
$v\gek(\Gt)\le C v(\Gt)/k$, $k\ge1$, and
$v\gek(\Gt)\ge c v(\Gt)/k$, $1\le k\le cv(\Gt)$, and
by \refT{Too} and \eqref{Epv}, it suffices (and is equivalent) to prove
that \as{}
\begin{align}
 && v\gek(\Gt) &\le \CC t\qgam/k, && 1\le k<\infty, &&\label{pow<t}
  \\
&&  v\gek(\Gt) &\ge \cc t\qgam/k, && 1\le k\le \cc t\qgam,&&\label{pow>t}
\end{align}
for every large $t$.

Let $I\xij$ be the indicator of an edge  $ij$ in $\Gt$; thus
$I\xij\sim\Be\bigpar{1-e^{-2tq_iq_j}}$. 
Let $D_i:=\sum_{j\neq i}I\xij$ be the degree of $i$ in the simple graph $\Gt$.
(The degree is defined as 0 if $i$ is not a vertex.)

\step{(i)}{The upper bound \eqref{pow<t}} 
We fix $t\ge1$ and an integer $k\ge1$; for convenience we often omit them
from the notation, but note that 
many variables below depend on them, while
all explicit and implicit constant are
independent of $t$ and $k$.

Let $J_i:=\indic{D_i\ge k}$ and $\NN:=\sum_i J_i=v\gek(\Gt)$.

Let $A$ be a large constant, chosen later, 
and assume that $k\ge A$,
let $i_0:=At^{1/\gam}/k$
and let
$\NNx:=\sum_{i>i_0}J_i$.
Thus
$N\le \NNx+i_0$.

If $i\ge i_0$, then
using \eqref{ED1}, \eqref{vC} and \eqref{Epv},
\begin{equation}\label{edd}
\E D_i\asymp v(tq_i)\asymp v(ti^{-\gam})
\le v\bigpar{A^{-\gam}k^\gam}\asymp k/A. 
\end{equation}
Thus $\E D_i \le \CC k/A$ for some $\CCx\ge0$, and choosing 
$A=\max(14\CCx,4)$, we find
that 
$\E D_i \le k/14\le (k-1)/7$. Since $D_i$ is a sum $\sum_j I\xij$
of independent Bernoulli variables, a Chernoff bound (see \eg{}
\cite[(2.11) and Theorem 2.8]{JLR}) yields 
\begin{equation}
  \label{xc}
\E J_i=\P(D_i\ge k)\le e^{-k},
\qquad i\ge i_0,
\end{equation}
and also, for later use,
\begin{equation}
  \label{xb}
\P(D_i\ge k-1)\le e^{1-k},
\qquad i\ge i_0.
\end{equation}
For $i\ge t^{1/\gam}$ we also have, by \eqref{edd} and \eqref{Epv},
\begin{equation}\label{edda}
  \E D_i \asymp v(ti^{-\gam}) \asymp ti^{-\gam}.
\end{equation}
Let $(x)_r:=x(x-1)\dotsm(x-r+1)$, the falling factorial.
Since $D_i$ is a sum of independent indicators, 
it is easily seen that for any positive integer $r$, 
the factorial moment can be bounded by
$\E (D_i)_r\le(\E D_i)^r$. Hence, by \eqref{edda} and 
Markov's inequality, since we assume
$k\ge A\ge 4$,
\begin{equation}\label{eddb}
  \E J_i = \P(D_i\ge k) \le \frac{\E (D_i)_4}{(k)_4}
\le \frac{(\E D_i)^4}{(k)_4}
\le \CC \frac{(ti^{-\gam})^4}{k^4}
\le \CCx \frac{ti^{-\gam}}{k^4},
\qquad i\ge t^{1/\gam}.
\CCdef\CCeddb
\end{equation}
(This also follows from \cite[(2.10) and Theorem 2.8]{JLR}.)
Summing \eqref{xc} and \eqref{eddb}, we obtain
\begin{equation}\label{en'}
  \E \NNx 
= \sum_{i> i_0} \E J_i
\le \sum_{i_0< i\le t^{1/\gam}} e^{-k} + \sum_{i> t^{1/\gam}} \CCeddb ti^{-\gam}/k^4
\le \CCname{\CCen} t^{1/\gam}/k^4.
\end{equation}

For the variance of $\NNx$, we note that
the indicators $J_i$ are not quite independent, since an edge $ij$
influences both $J_i$ and $J_j$, but conditioned on $I\xij$, $J_i$ and $J_j$
are independent. Hence, for any distinct $i$ and $j$,
\begin{equation*}
  \begin{split}
\E(J_iJ_j)
&=\P(I\xij=1)\E\bigpar{J_iJ_j\mid I\xij=1} 
+ \P(I\xij=0)\E\bigpar{J_iJ_j\mid I\xij=0} 
\\
&=\P(I\xij=1)\E \bigpar{J_i\mid I\xij=1}\E\bigpar{J_j\mid I\xij=1} 
\\&\hskip2em{}
+ \P(I\xij=0)\E\bigpar{J_i\mid I\xij=0} \E\bigpar{J_j\mid I\xij=0} 
\\
&\le \P(I\xij=1)\P(D_i\ge k-1)\P(D_j\ge k-1)
+ \P(I\xij=0)\E{J_i} \E {J_j}
  \end{split}
\end{equation*}
and thus
\begin{equation}\label{xa}
  \Cov(J_i,J_j)\le \P(I\xij=1)\P(D_i\ge k-1)\P(D_j\ge k-1).
\end{equation}
By \eqref{xa} and \eqref{xb}, for $i,j\ge i_0$ with $i\neq j$,
\begin{equation}
  \Cov(J_i,J_j) \le 2tq_iq_j e^{2(1-k)}
\le \CC t i^{-\gam}j^{-\gam}e^{-2k}.
\end{equation}
Consequently, using also \eqref{en'},
\begin{equation}
  \begin{split}
  \Var \NNx 
&=\sum_{i,j> i_0}\Cov(J_i,J_j) 
\le \E \NNx+  \CCx te^{-2k} \sum_{i,j> i_0}i^{-\gam}j^{-\gam}
\\&
\le \CCen t^{1/\gam}k^{-4} + \CC te^{-2k} i_0^{2(1-\gam)} 
\le \CCname\CCche t^{1/\gam}k^{-4}.
  \end{split}
\end{equation}
Hence, by Chebyshev's inequality,
\begin{equation}\label{che}
  \P\bigpar{\NNx-\E \NNx>t^{1/\gam}/k}
\le \frac{\Var \NNx}{(t^{1/\gam}/k)^2} \le \CCche t^{-1/\gam}k^{-2}.
\end{equation}

We have so far kept $t$ and $k$ fixed. We now sum \eqref{che} over all $k\ge
A$ and $t=2^\ell$ for $\ell\in \bbN$, and find by the Borel--Cantelli lemma that
\as{} for every large $t$ of this form and every $k\ge A$, 
$\NNx-\E \NNx\le t^{1/\gam}/k$, and
consequently, using also \eqref{en'},
\begin{equation}
  \NN \le \NNx+i_0 \le \E \NNx+ t^{1/\gam}/k+i_0
\le \CC t^{1/\gam}/k.
\end{equation}
This is \eqref{pow<t} for $k\ge A$ and
$t\in\set{2^\ell}$;
since $\NN$ increases with $t$, 
\eqref{pow<t} follows in general (with a different constant), 
\as{} for large $t$ and all $k\ge A$.

For $k<A$, \eqref{pow<} and \eqref{pow<t} follow trivially from
$v\gek(\Gt)\le v(\Gt)$. 

\step{(ii)}{The lower bound \eqref{pow>t}} 
Fix again $t\ge1$ and $k\ge1$, 
let $B$ be a large constant chosen later,
and assume that $k\le t^{1/\gam}/B$.

Let $L$ be the set of odd integers $i$ with $1\le i \le i_1:=B\qw t^{1/\gam}/k$,
and let $R$ be the set of even integers $j$ with $1\le j\le 6k$.
By our assumption on $k$, $i_1\ge1$, and thus $|L|=\floor{(i_1+1)/2}\ge i_1/3$.
Note that the indicators $\set{I\xij}_{i\in L,\,j\in R}$ are independent.
For $i\in L$, let $D_i':=\sum_{j\in R} I\xij$ and $J'_i=\indic{D_i'\ge k}$.
Thus the indicators $\set{J_i'}_{i\in L}$ are independent.
Also, let $N':=\sum_{i\in L}J_i'$. Since $J_i' \le J_i$, we have
$N'\le \sum_{i\in L} J_i \le \sum_{i\ge1}J_i=N=v\gek(\Gt)$.

If $i\in L$ and $j\in R$, then
$ij\le 6ki_1=6B\qw t\qgam$, and thus
\begin{equation}\label{krk}
  tq_iq_j \ge \cc t i^{-\gam}j^{-\gam}
\ge \cc B^\gam.
\end{equation}
Choose $B:=\ccx^{1/\gam}$; then by \eqref{krk}, when $i\in L$ and $j\in R$,
$tq_iq_j\ge1$ and thus
\begin{equation}\label{krx}
  \P(I\xij=0) = e^{-2tq_iq_j} \le e^{-2}.
\end{equation}
Since $|R|=3k$, it follows that if $i\in L$, then
$\E D_i' \ge 3(1-e^{-2})k>2.5k$, and moreover, by a Chernoff bound
(\eg{} \cite[(2.12)]{JLR}),
\begin{equation}\label{kry}
\P(J_i'=0)=  \P(D_i'<k) \le e^{- k} \le e^{-1}.
\end{equation}

Since the indicators $J_i'$ are independent for $i\in L$, 
another  Chernoff bound shows that
\begin{equation}\label{krz}
\P(N'<|L|/2) \le e^{-\cc |L|} \le e^{-\cc i_1}.
\end{equation}
Alternatively, \eqref{kry} and a union bound yield
\begin{equation}\label{krzz}
\P(N'<|L|/2)  \le
\P(N'<|L|)\le \sum_{i\in L} \P(J_i'=0) \le i_1 e^{-k}.
\end{equation}
If $1\le k \le t^{1/2\gam}$, then $i_1\ge B\qw t^{1/2\gam}$, and thus
\eqref{krz} yields $\P(N'<|L|/2) \le e^{-\cc t^{1/2\gam}}$.
If $t^{1/2\gam}< k\le t^{1/\gam}/B$, then \eqref{krzz} yields 
$\P(N'<|L|/2) \le i_1 e^{-\cc t^{1/2\gam}}\le \CC e^{-\cc t^{1/2\gam}}$.
Consequently, for every $k\le t^{1/\gam}/B$,
\begin{equation}\label{krw}
  \P(N< |L|/2)\le 
\P(N'<|L|/2) \le  \CC e^{-\cc t^{1/2\gam}}.
\end{equation}

We have kept $k$ and $t$ fixed, but we now sum \eqref{krw} over all $k\le
t^{1/\gam}/B$ and $t=2^\ell$ for some $\ell\in\bbNo$. It follows by the
Borel--Cantelli lemma that \as{} for every large $t$ of this form 
and every $k\le t^{1/\gam}/B$,
$N\ge |L|/2\ge i_1/6 \ge \cc t\qgam/k$.
This proves \eqref{pow>t} for $t$ of the form $2^\ell$, and again the
general case follows since $N$ is monotone in $t$.
\end{proof}

Furthermore, assuming $q_i\sim ci^{-\gam}$ 
in \refE{Epower}, we can show that
the empirical stretched graphon, with a
suitable stretch, converges to a (non-integrable) graphon on $\bbRp$, in the
sense discussed in \refS{Sgraphex}.

\begin{theorem}\label{Tgraphex}
  In \refE{Epower}, assume that $q_i\sim c i^{-\gam}$ as $i\to\infty$, with
  $c>0$. 
Then the stretched empirical graphon $W_{\Gt,t^{1/2\gam}}\togp W$ \as{} as
\ttoo, where $W$ is the graphon $W(x,y)=1-\exp\bigpar{-2c^2 x^{-\gam}y^{-\gam}}$ on
$\bbRp^2$.

As a consequence, $\Gt\togs W$ \as{} as \ttoo.
\end{theorem}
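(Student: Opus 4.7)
The plan is to verify the defining condition of $\togp$: that $G_r(W_{\Gt,s(t)})\dto G_r(W)$ for each $r>0$ as $t\to\infty$, a.s.\ in the randomness defining $\Gt$. The argument proceeds by first establishing unconditional convergence in distribution via a Poisson coupling, and then lifting this to an a.s.\ statement through an Efron--Stein variance bound on conditional subgraph probabilities.

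Setting $s=s(t)=t^{1/2\gam}$ so that $ts^{-2\gam}=1$, let $\Pi=\{\eta_k\}_{k\ge1}$ be a Poisson process of intensity $r$ on $\bbRp$, independent of $\Gt$. By the construction in \refSS{SSdefPo}, the edge multiplicities $N_{ij}(t)$ in $\GG_t$ are independent $\Po(2tq_iq_j)$, and $\{i,j\}\in E(\Gt)$ iff $N_{ij}(t)\ge1$. With probability tending to $1$, no two points of $\Pi$ fall in the same interval $I_i=((i-1)/s,i/s]$, so $\bG_r(W_{\Gt,s})$ has edge $kk'$ exactly when $N_{\ceil{s\eta_k}\ceil{s\eta_{k'}}}(t)\ge1$. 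Since $q_i\sim ci^{-\gam}$, for every positive $\eta_k,\eta_{k'}$ one has $2tq_{\ceil{s\eta_k}}q_{\ceil{s\eta_{k'}}}\to 2c^2\eta_k^{-\gam}\eta_{k'}^{-\gam}$. Representing $\bG_r(W)$ equivalently through the Poisson construction (edge $kk'$ iff an independent $\Po(2c^2\eta_k^{-\gam}\eta_{k'}^{-\gam})$ variable is $\ge1$) and using the standard TV-coupling of Poisson distributions with close means, the two graphs can be made to coincide on any bounded window $[0,M]$ with probability $1-o(1)$; this yields unconditional $G_r(W_{\Gt,s(t)})\dto G_r(W)$.

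For the a.s.\ statement, fix a finite unlabeled graph $F$ and set $p_t(F;\Gt):=\P(G_r(W_{\Gt,s(t)})=F\mid\Gt)$; the previous step gives $\E p_t(F;\Gt)\to\P(G_r(W)=F)$. The $\Gt$-edge indicators $\{\indic{i\sim j}\}_{i<j}$ are mutually independent functions of the independent Poisson variables $N_{ij}(t)$, and flipping one of them perturbs $p_t(F;\Gt)$ only when $\Pi$ contains points in both $I_i$ and $I_j$, an event of probability $O(s^{-2})$. The Efron--Stein inequality then gives $\Var p_t(F;\Gt)=O(s^{-2})$ on any compact window (with tails controlled by the integrability of $W$, where $\gam>1$ plays its role), and Chebyshev plus Borel--Cantelli along $t_n=2^n$, together with interpolation using monotonicity of $(G_r(W_{\Gt,s}))$ in $r$, yields $p_t(F;\Gt)\to\P(G_r(W)=F)$ a.s.\ for every $F$, hence $W_{\Gt,s(t)}\togp W$ a.s. The main technical subtlety here lies in making the Riemann-sum approximation $\sum_{i,j\le sM}\indic{i\sim j}/s^2\to\tfrac12\iint_{(0,M)^2}W$ uniform near the origin, where $W$ is unbounded; a dyadic truncation argument exploiting $\gam>1$ will be needed to sum the contributions from small $\eta_k$ in a controlled fashion.

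The stronger $\Gt\togs W$ then follows from the $\togp$ conclusion by running the above coupling jointly for finitely many values of $r$: both $(G_r(W_{\Gt,s}))_{r\ge0}$ and $(G_r(W))_{r\ge0}$ can be built as increasing graph processes from a single Poisson process on $\bbRp\times\bbRp$ (the second coordinate playing the role of the intensity parameter), so joint convergence at any finite collection $r_1<\dots<r_k$ follows from the a.s.\ $\togp$ statement applied at each $r_i$, and this is precisely the defining input for the $\togs$ convergence of the jump sequence $(G_{\tau_k})_k$ as formalised in \cite{VR2,SJ317}.
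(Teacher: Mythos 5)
Your architecture is genuinely different from the paper's: the paper couples the whole family $(\Gt)_{t>0}$ through a single i.i.d.\ array $Z_{kl}\sim\Exp(1)$ (edge $kl$ present iff $2tq_kq_l\ge Z_{kl}$) and proves a \emph{quenched} limit directly, via lemmas (\refL{LS}, \refL{LQ}) showing that sampling a fixed realization of an i.i.d.\ array at indices $\ceil{tX_i}$ reproduces a fresh i.i.d.\ copy in the limit, with tightness handled by Smythe's multidimensional maximal inequality. Your route --- annealed convergence first, then Efron--Stein concentration of $p_t(F;\Gt)$ about its mean, then Borel--Cantelli along $t_n=2^n$ --- is an attractive alternative, and the concentration step is essentially sound: summing $\E(D_{ij}p_t)^2\le 2(1-e^{-2t\mu_{ij}})(r/s)^4$ over \emph{all} pairs gives $O(s^{-4}e(t))=O(t^{-1/\gam}\log t)$, so no windowing is even needed there, and this neatly sidesteps the paper's uniform-in-$t$ tail estimate (only the annealed tail bound is needed for $\E p_t$).

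The genuine gap is the passage from the dyadic subsequence to all $t$. Your justification, ``interpolation using monotonicity of $(G_r(W_{\Gt,s}))$ in $r$,'' does not address the problem: the parameter being interpolated is $t$, not $r$, and $p_t(F;\Gt)=\P(G_r(W_{\Gt,s(t)})=F\mid\Gt)$ is not monotone in $t$ in any useful sense. Worse, between $t_n$ and $t_{n+1}=2t_n$ two things change substantially: the graph $\Gt$ gains on the order of $t^{1/\gam}$ new edges inside the window $[1,sM]^2$ (a constant fraction of all pairs there, since $tq_iq_j\gtrsim M^{-2\gam}$ throughout the window), and the stretch $s(t)$ changes by the factor $2^{1/2\gam}$, which moves essentially \emph{every} sampled point $\eta_k$ into a different interval $I_i$, i.e.\ relabels it to a different --- and independently wired --- vertex of $\Gt$. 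So there is no pathwise bound $|p_u-p_t|=o(1)$ for $u\in[t_n,t_{n+1}]$; one is forced to a grid of relative spacing of order $s^{-1}=t^{-1/2\gam}$ (so that $\P(\ceil{s\eta_k}\ne\ceil{s'\eta_k})$ summed over the $O(rM)$ relevant points is small), to verify that the union bound over the now $\asymp t_n^{1/2\gam}$ grid points per dyadic block still sums against the variance bound $O(t^{-1/\gam}\log t)$, and to control the new-edge contribution on each short interval by a Chernoff bound rather than Markov (its expectation alone is only $O(s^{-1}\log s)$, which does not survive the union bound after Markov). None of this is automatic and none of it is in your write-up; it is precisely the kind of difficulty the paper's $\Exp(1)$-coupling plus the deterministic continuity estimate $|P_t-P_u|\le C(u-t)$ in \refL{LS} is designed to avoid. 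Secondarily, your closing argument for $\togs$ overreaches: a.s.\ convergence of the marginals $G_{r_i}(\hwt)$ for finitely many $r_i$ does not by itself give convergence of the jump chain $(G_{\tau_k})_k$; like the paper, you should simply invoke the implication $\togp\Rightarrow\togs$ from \cite{VR2,SJ317} together with the invariance of $\togs$ under stretching.
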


Note that $W(x,y)\ge 1-\exp(-2c^2)>0$ when $xy\le 1$, and thus $\int W=\infty$.

We prove first two lemmas.

\begin{lemma}\label{LS}
Let $\ZZ$ be an array of \iid{} random variables.
Furthermore,
let $x_1,\dots,x_n>0$ be distinct and let $X$ be a  random variable,
independent of the array $\ZZ$, with $X\sim U(a,b)$ where $0<a<b<\infty$.
Then, 
\begin{equation}\label{ls1}
  \cL\bigpar{(Z_{\ceil{tx_i},\ceil{tX}})_{i=1}^n\mid \ZZ}
\asto \cL\bigpar{(Z_{i,n+1})_{i=1}^n}
\end{equation}
 as \ttoo.
\end{lemma}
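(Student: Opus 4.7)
The plan is to view the conditional law $\mu_t:=\cL\bigpar{(Z_{\ceil{tx_i},\ceil{tX}})_{i=1}^n\mid\ZZ}$ as a random probability measure on $\bbR^n$ and show that $\mu_t$ converges weakly to $\mu:=F^{\otimes n}$ almost surely, where $F$ denotes the common distribution of the $Z_{kl}$. Since weak convergence on $\bbR^n$ is metrizable and separable (via, e.g., the bounded Lipschitz metric), it suffices to fix a countable family of bounded Lipschitz $g:\bbR^n\to\bbR$ dense in $C_b(\bbR^n)$ and show that, for each such $g$,
\begin{equation*}
I_t(g):=\int g\,d\mu_t=\sum_K p_K(t)\,g(V_K(t))\asto\int g\,d\mu\qquad\text{as }\ttoo,
\end{equation*}
where $p_K(t):=\P(\ceil{tX}=K)$ and $V_K(t):=(Z_{\ceil{tx_i},K})_{i=1}^n$; intersecting the resulting probability-one events over this countable class then yields \eqref{ls1}.

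Because $x_1,\dots,x_n$ are distinct, for all $t$ large enough the row indices $\ceil{tx_1},\dots,\ceil{tx_n}$ are themselves all distinct, and for such $t$ the vectors $(V_K(t))_{K\ge1}$ are \iid{} with common law $\mu$; in particular $\E I_t(g)=\int g\,d\mu$. Since $X\sim U(a,b)$ has density $(b-a)\qw$, one has $p_K(t)\le\xpar{t(b-a)}\qw$ and hence $\sum_Kp_K(t)^2\le\xpar{t(b-a)}\qw$, so the standard fourth-moment bound for weighted sums of independent centered bounded random variables gives
\begin{equation*}
\E\bigpar{I_t(g)-\E I_t(g)}^4\le C\|g\|_\infty^4\Bigpar{\sum_Kp_K(t)^2}^2\le C'\|g\|_\infty^4\,t^{-2}.
\end{equation*}
Markov's inequality and the Borel--Cantelli lemma then yield $I_N(g)\to\int g\,d\mu$ a.s.\ along integer $N$.

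The main obstacle is upgrading this from integers to all real $t$. I would partition each interval $[N,N+1)$ into the at most $1+\sum_i\ceil{x_i}$ sub-intervals on which the configuration $(\ceil{tx_1},\dots,\ceil{tx_n})$ of row indices is constant. On such a sub-interval $V_K(t)$ does not depend on $t$, and $I_t(g)$ varies only through the continuous motion of the weights $p_K(t)$; a direct estimate of $\sum_K|p_K(t)-p_K(s)|$ from the uniform density of $X$ bounds the oscillation by $O(\|g\|_\infty/N)$. At each of the (uniformly boundedly many) boundary points, exactly one $\ceil{tx_i}$ advances by one, so the differences $g(V_K(t+))-g(V_K(t-))$ involve a fresh row of $\ZZ$ independent of the others and are independent centered bounded random variables across $K$; the jump of $I_t(g)$ is then a weighted sum of these with the same $\sum_Kp_K(t)^2=O(1/N)$, so its fourth moment is $O(1/N^2)$. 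A final application of Borel--Cantelli (over $N$, summed over the uniformly bounded number of jumps per interval) controls all these jumps simultaneously, yielding $\sup_{t\in[N,N+1)}|I_t(g)-I_N(g)|\to0$ a.s., which combined with the integer-$t$ convergence completes the proof for a fixed $g$ and hence, after intersecting over the countable dense class, for the lemma.
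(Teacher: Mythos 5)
Your proof is correct, and it shares the paper's overall skeleton (reduce to countably many scalar limits, prove concentration of the conditional expectation, apply Borel--Cantelli along a discrete set of times, then interpolate to all real $t$), but each step is implemented genuinely differently. The paper tests the conditional law against the indicators $\prod_i\indic{Z_{\ceil{tx_i},l}\le z_i}$ for rational $z_i$, rewrites $P_t$ exactly as an average of $\asymp t(b-a)$ \iid{} Bernoulli variables $J_l$, and gets \emph{exponential} concentration from a Chernoff bound; this is strong enough to afford a grid $t_n=n/N$ of spacing $\asymp\eps$ together with the crude continuity bound $|P_t-P_u|\le \P(\ceil{tX}\ne\ceil{uX})=O(u-t)$. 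You instead use bounded test functions and obtain only polynomial ($t^{-2}$) concentration from a fourth-moment bound, which forces you to work harder on the interpolation across the unit gaps $[N,N+1)$. Your decomposition there is sound: with the row configuration frozen, all the nonzero weights equal $((b-a)t)^{-1}$ except for $O(1)$ boundary indices and the supports shift by only $O(1)$ indices, so $\sum_K|p_K(t)-p_K(s)|=O(1/N)$ as you claim; and each of the $O(1)$ jumps caused by some $\ceil{tx_i}$ advancing is again a weighted sum of independent centred bounded variables with $\sum_Kp_K^2=O(1/N)$, so a fourth-moment/Borel--Cantelli argument controls them all. A minor bonus of your route is that it explicitly tracks the dependence of the row indices $\ceil{tx_i}$ on $t$, which the paper's one-line estimate $|P_t-P_u|\le\P(\ceil{tX}\ne\ceil{uX})$ quietly glosses over (those changes occur at finitely many deterministic times per unit interval, so the paper's argument is easily repaired, but yours needs no repair). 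The price you pay is the extra bookkeeping of the fourth-moment and jump estimates where the paper gets by with a Chernoff bound and a finer grid.
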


In other words, conditionally on $\ZZ$
and for \aex{} every realization of $\ZZ$,
the random vector
$(Z_{\ceil{tx_i},\ceil{tX}})_{i=1}^n$ converges in distribution to 
$(Z'_{i,n+1})_{i=1}^n$, 
where $\ZZZ$ is an independent copy of $\ZZ$.

\begin{proof}
It suffices to prove that for every fixed rational
$z_1,\dots,z_n$,
\begin{equation}\label{pik}
  \P\bigpar{Z_{\ceil{tx_i},\ceil{tX}}\le z_i, 1\le i\le n\mid \ZZ}
\asto \pi:=\P\bigpar{Z_{i,n+1}\le z_i, 1\le i\le n},
\end{equation}
where
\begin{equation}\label{pi}
  \pi=\prod_{i=1}^n\P\bigpar{Z_{i,n+1}\le z_i}=\prod_{i=1}^n\P(Z_{11}\le z_i).
\end{equation}
Let further $I_{k,l,i}:=\indic{Z_{kl}\le z_i}$
and $J_l:=\prodin I_{\ceil{tx_i},l,i}$.
Then, with the error term coming from edge effects,
\begin{equation}\label{ls3}
  \begin{split}
& P_t:= \P\Bigpar{Z_{\ceil{tx_i},\ceil{tX}}\le z_i, 1\le i\le n\mid \ZZ}
=\E\Bigpar{\prodin I_{\ceil{tx_i},\ceil{tX},i}\mid\ZZ}
\\&\qquad
=\E\bigpar{J_{\ceil{tX}}\mid\ZZ}
=\frac{1}{(b-a)t} \sum_{ta<l\le tb} J_l+o(1).
  \end{split}
\raisetag\baselineskip
\end{equation}
If $t$ is sufficiently large, then $\ceil{tx_1},\dots,\ceil{tx_n}$ are
distinct, and then, see \eqref{pi}, 
\begin{equation}
\E J_l=\prodin \E I_{\ceil{tx_i},l,i}
=\prodin  \P\bigpar{Z_{\ceil{tx_i},l}\le z_i}=\pi.
\end{equation}
Furthermore, then the variables $J_l\sim\Be(\pi)$ are \iid, so their sum in
\eqref{ls3} has a binomial distribution, and a Chernoff bound shows that for
every $\eps>0$, there is a $c=c(\eps)>0$ such that for large $t$,
\begin{equation}\label{pit}
  \P\bigpar{|P_t-\pi|>\eps}\le e^{-c t}.
\end{equation}
This shows that $P_t$ converges to $\pi$ in probability as \ttoo. 
In order to show convergence \as, we note that if $0<t<u$, and $t(b-a)>1$,
then (for fixed $a$ and $b$) $\P(\ceil{tX}\neq\ceil{uX})=O(u-t)$, and
consequently, for some $C>0$,
\begin{equation}\label{put}
|  P_t-P_u|\le \P(\ceil{tX}\neq\ceil{uX})\le C(u-t).
\end{equation}
Let $\eps>0$, let $N:=\ceil{C/\eps}$ and let $t_n:=n/N$. 
By \eqref{pit} and the Borel--Cantelli lemma, \as{} $|P_{t_n}-\pi|\le\eps$
for all large $n$. Furthermore, if $n$ is large and $t_n\le t\le t_{n+1}$, then
\eqref{put} implies $|P_t-P_{t_n}|\le\eps$, and thus $|P_t-\pi|\le2\eps$.
Consequently, \as, $|P_t-\pi|\le 2\eps$ for every large $t$.
Since $\eps$ is arbitrary, this proves \eqref{pik} and thus the lemma.
\end{proof}

\begin{lemma}\label{LQ}
Let $\ZZ$ be an array of \iid{} random variables, and
let  $(X_1,\dots,X_n)$ be a random vector in $\bbRp^n$ with an absolutely
continuous distribution,
independent of the array $\ZZ$.
Then, 
\begin{equation}\label{lq}
  \cL\bigpar{(Z_{\ceil{tX_i},\ceil{tX_j}})_{1\le i<j\le n}\mid \ZZ}
\asto   \cL\bigpar{(Z_{i,j})_{1\le i<j\le n}}
\end{equation}
 as \ttoo.
\end{lemma}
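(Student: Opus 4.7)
The plan is to extend the proof of \refL{LS} to the multi-index setting. As in that proof, it suffices to fix a rational vector $(z_{ij})_{1\le i<j\le n}$ and show that
\begin{equation*}
P_t:=\P\bigpar{Z_{\ceil{tX_i},\ceil{tX_j}}\le z_{ij}\text{ for all }i<j\mid\ZZ}\asto\pi:=\prod_{i<j}\P(Z_{11}\le z_{ij})
\end{equation*}
as \ttoo.

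First I would reduce to the case that $(X_1,\dots,X_n)$ is uniformly distributed on a rectangle $Q=[a_1,b_1]\times\cdots\times[a_n,b_n]\subset\bbRp^n$ whose coordinate intervals $[a_i,b_i]$ are pairwise disjoint. Since the diagonals $\{x_i=x_j\}\subset\bbRp^n$ have Lebesgue measure zero, absolute continuity ensures that the joint density of $(X_1,\dots,X_n)$ can be approximated in $L^1$ by a step function supported on finitely many such separated rectangles, and by linearity it suffices to prove the convergence for one such uniform distribution.

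For uniform $(X_1,\dots,X_n)$ on such a $Q$, a Riemann-sum expansion gives, for large $t$,
\begin{equation*}
P_t=\frac{S_t}{t^n\prod_i(b_i-a_i)}+O(1/t),\qquad S_t:=\sum_{(k_1,\dots,k_n)\in B_1\times\cdots\times B_n}\prod_{i<j}\indic{Z_{k_i,k_j}\le z_{ij}},
\end{equation*}
where $B_i\subset\bbN$ is a block of $\sim t(b_i-a_i)$ consecutive integers, with the blocks pairwise disjoint for large $t$. For each $\mathbf{k}=(k_1,\dots,k_n)\in B_1\times\cdots\times B_n$, the pairs $(k_i,k_j)$ with $i<j$ refer to distinct entries of $\ZZ$, so the $\binom{n}{2}$ indicator factors are independent and the summand $Y_{\mathbf{k}}$ has expectation $\pi$. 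Two summands $Y_{\mathbf{k}}$ and $Y_{\mathbf{k}'}$ are independent unless $k_i=k'_i$ for some $i$; the number of such pairs $(\mathbf{k},\mathbf{k}')$ is $O(t^{2n-1})$ while there are $\Theta(t^{2n})$ pairs in total. Hence $\E S_t\sim \pi t^n\prod_i(b_i-a_i)$ and $\Var S_t=O(t^{2n-1})$, so Chebyshev's inequality yields $\P(|P_t-\pi|>\eps)=O(1/t)$.

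Summing along $t_\ell=2^\ell$ and applying the Borel--Cantelli lemma gives $P_{t_\ell}\asto\pi$. Interpolation upgrades this to a.s.\ convergence for every $t$: since $(X_1,\dots,X_n)$ is independent of $\ZZ$,
\begin{equation*}
|P_t-P_u|\le\P\bigpar{\ceil{tX_i}\neq\ceil{uX_i}\text{ for some }i}=O(|u-t|),
\end{equation*}
so the interpolation argument at the end of the proof of \refL{LS} applies verbatim. The main obstacle is the reduction to uniform distributions on separated rectangles; once that is in place, the $n$-dimensional law of large numbers via Chebyshev and the subsequent interpolation parallel the proof of \refL{LS}.
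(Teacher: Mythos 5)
Your route is genuinely different from the paper's: instead of inducting on $n$ via \refL{LS} (which is how the paper handles the joint convergence, adding one new index $X_n$ at a time and invoking the one-dimensional lemma conditionally on $X_1,\dots,X_{n-1}$), you run a direct second-moment law of large numbers on the $n$-dimensional Riemann sum. The reduction to uniform laws on rectangles with pairwise disjoint coordinate intervals is fine (the paper's Step 2 allows repeated intervals precisely because its Step 1 does not need disjointness, but your separated-rectangle reduction is legitimate since the diagonals carry no mass), and the identification $\E Y_{\mathbf k}=\pi$ is correct because disjointness of the blocks makes the $\binom n2$ entries of $\ZZ$ appearing in one summand distinct.

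The gap is in the almost-sure upgrade. Your Chebyshev bound is $\P(|P_t-\pi|>\eps)=O(1/t)$, which is summable along $t_\ell=2^\ell$ but not along an arithmetic progression $t_m=m/N$. The interpolation argument at the end of the proof of \refL{LS} does \emph{not} apply verbatim: it needs a.s.\ convergence along a progression with spacing at most $\eps/C$ (so that the Lipschitz estimate $|P_t-P_u|\le C|u-t|$ bridges the gaps), and that in turn needs summability of the tail bound along such a progression, which $O(1/t)$ does not give. Conversely, along $t_\ell=2^\ell$ the gaps have length $2^\ell$ and the bound $|P_t-P_{t_\ell}|=O(t-t_\ell)$ is useless; $P_t$ is not monotone in $t$, so you cannot fall back on monotonicity as in \refL{LW}. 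The fix is within reach of your own computation: two summands $Y_{\mathbf k}$, $Y_{\mathbf k'}$ are dependent only if they share a \emph{full} index pair, \ie{} $(k_i,k_j)=(k'_i,k'_j)$ for some $i<j$ (sharing a single coordinate $k_i=k'_i$ still leaves all the relevant entries of $\ZZ$ distinct), so the number of dependent pairs is $O(t^{2n-2})$ and $\Var S_t=O(t^{2n-2})$, whence $\P(|P_t-\pi|>\eps)=O(t^{-2})$, which is summable along $t_m=m/N$ and makes the \refL{LS} interpolation go through. (A fourth-moment or bounded-differences estimate would also do.) With that correction your argument is complete.
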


\begin{proof}
\stepx\label{stepa}
Assume first that $X_1,\dots,X_n$ are independent with $X_i\sim U(I_i)$ for
some  intervals $I_1,\dots,I_n$.  
In this case we prove \eqref{lq} by induction on $n$, so we may assume that
\begin{equation}\label{lq-}
  \cL\bigpar{(Z_{\ceil{tX_i},\ceil{tX_j}})_{1\le i<j\le n-1}\mid \ZZ}
\asto   \cL\bigpar{(Z_{i,j})_{1\le i<j\le n-1}}.
\end{equation}
Furthermore, by \refL{LS} and conditioning on $X_1,\dots,X_{n-1}$,
\begin{equation}\label{lq+}
  \cL\bigpar{(Z_{\ceil{tX_i},\ceil{tX_n}})_{1\le i\le n-1}\mid \ZZ, X_1,\dots,X_{n-1}}
\asto   \cL\bigpar{(Z_{i,n})_{1\le i\le n-1}}.
\end{equation}
The result \eqref{lq} follows by \eqref{lq-} and \eqref{lq+}, which shows
the induction step and completes the proof of this step.

\stepx\label{stepb}
Suppose that there exists a finite family of disjoint intervals $I_k$ such
that
the density function 
$f(x_1,\dots,x_n)$ of $(X_1,\dots, X_n)$ is 
supported on $\bigpar{\bigcup_k I_k}^n$ and
constant on each $\prodin I_{k_i}$.
Then \refStep{stepa} shows that for each sequence $k_1,\dots,k_n$ of
 indices, \eqref{lq} holds conditioned on $(X_1,\dots,X_n)\in
\prodin I_{k_i}$. Hence \eqref{lq} holds unconditioned too.

\stepx\label{stepc}
The general case. Let $f(x_1,\dots,x_n)$ be the density function of
$(X_1,\dots, X_n)$, and let
$\eps>0$. 
Then there exists a density function $f_0(x_1,\dots,x_n)$ of the type in
\refStep{stepb} such that $\int |f-f_0|\dd x_1\dots\dd x_n<\eps$.
We can interpret $f_0$ as the density function of a random vector
$\XX^0=(X^0_1,\dots,X^0_n)$, and we can couple this vector with
$\XX=(X_1,\dots,X_n)$ such that 
$\P\bigpar{\XX\neq\XX^0}<\eps$.

Since \refStep{stepb} applies to $\XX^0$, it follows that
\begin{equation}\label{ls1x}
\P\bigpar{\text{the convergence in \eqref{lq} holds}}
\ge \P\bigpar{(X,Y)=(X_0,Y_0)}>1-\eps.
\end{equation}
Since $\eps>0$ is arbitrary, \eqref{lq} follows.
\end{proof}

\begin{proof}[Proof of \refT{Tgraphex}] \CCreset
Let $w_x:=c\qw q_{\ceil x} x^{\gam}=1+o(1)$, as \xtoo.

We can construct $\Gt$ for all $t>0$ by taking \iid{} random variables
$Z_{kl}\sim\Exp(1)$ and letting there be an edge $kl$ in $\Gt$ if
$2tq_kq_l \ge Z_{kl}$, for every pair $(k,l)$ with $k<l$.

Let $\hwt:=W_{\Gt,t^{1/2\gam}}$ be the stretched empirical graphon in
the statement.
Fix $r>0$, and consider the random graph $G_r(\hwt)$; this is by
\refS{Sgraphex} obtained by taking a Poisson process $\set{\eta_i}_i$ on
$\bbRp$ with intensity $r$ (where we assume $\eta_1<\eta_2<\dots$), 
and then taking an edge $ij$ if and only if
$\hwt(\eta_i,\eta_j)=1$. By the definition of $\hwt$, this is equivalent to
$\Gt$ having an edge between $\ceil{\tgg \eta_i}$ and $\ceil{\tgg \eta_j}$,
and thus by the construction of $\Gt$ to
(assuming that $t$ is large so that $\ceil{\tgg \eta_i}\neq\ceil{\tgg \eta_j}$)
\begin{equation}\label{cf}
 2 t q_{\ceil{\tgg \eta_i}} q_{\ceil{\tgg \eta_j}} \ge Z_{\ceil{\tgg \eta_i},\ceil{\tgg \eta_j}}
\end{equation}
or, equivalently, 
\begin{equation}\label{nt}
  2c^2 \eta_i^{-\gam}\eta_j^{-\gam}
\ge w_{{\tgg \eta_i}}\qw w_{{\tgg \eta_j}}\qw  Z_{\ceil{\tgg \eta_i},\ceil{\tgg \eta_j}}.
\end{equation}

Fix $n<\infty$ and consider the edge indicators $I_{i,j,t}$ in $G_r(\hwt)$ for
$1\le i<j\le n$. Furthermore, fix a large integer $N$ and condition
$(\eta_1,\dots,\eta_n)$ on $\ceil{N\eta_1},\dots,\ceil{N\eta_n}$.
By \refL{LQ}, and recalling $w_x=1+o(1)$,
the distribution of the \rhs{} of \eqref{nt} converges \as{} to
independent $\Exp(1)$ variables, jointly for $1\le i<j\le n$.
Since $I_{i,j,t}$ equals the indicator of \eqref{nt}, it follows by first
replacing the \lhs{} of \eqref{nt} by upper and lower bounds obtained by
rounding each $\eta_i$ down or up to the nearest multiple of $1/N$, applying
\refL{LQ} and then letting \Ntoo, that
\begin{equation}\label{harda}
  \begin{split}
\cL\bigpar{(I_{i,j,t})_{1\le i<j\le n}\mid \Gt}
\to 
\cL\bigpar{\bigpar{\etta
  \bigset{2c^2\eta_i^{-\gam}\eta_j^{-\gam}\ge Z_{ij}}}_{1\le i<j\le n}}.
  \end{split}
\end{equation}
Here, conditioned on $\eta_1,\dots,\eta_n$, the indicators in the \rhs{} are
independent, and have (conditional) expectations
\begin{equation}
\P\bigpar{ 2c^2\eta_i^{-\gam}\eta_j^{-\gam}\ge Z_{ij}\mid \eta_i,\eta_j}
=1-\exp\bigpar{-2c^2\eta_i^{-\gam}\eta_j^{-\gam}}
=W(\eta_i,\eta_j).
\end{equation}
This equals the (conditional) probability of an edge $ij$ in $G_r(W)$.
Consequently, if $I_{i,j}$ is the indicator of an edge $ij$ in $G_r(W)$,
\eqref{harda} shows that \as, as \ttoo,
\begin{equation}\label{hardb}
\bigpar{(I_{i,j,t})_{1\le i<j\le n}\mid \Gt}
\dto
{(I_{i,j})_{1\le i<j\le n}}
\end{equation}
This shows the desired convergence $G_r(\hwt)\dto G_r(W)$, 
provided we restrict the graphs to a fixed finite set of vertices.

To extend this to the infinite number of potential vertices, we need a
tightness argument. (Unfortunately, we did not find a really simple argument.)
Let $a,b>0$, and let $V_{a,b,t}$ denote the number of edges in $G_r(\hwt)$
with endpoints labelled $\eta_i,\eta_j$ with $\eta_i\in(a,2a]$ and
$\eta_j\in(b,2b]$. Then, \cf{} \eqref{cf},
\begin{equation}\label{kabo}
  \E \bigpar{V_{a,b,t}\mid\ZZ}
\le \sum_{
  k=\ceil{a\tgg}}^{\ceil{2a\tgg}}\sum_{l=\ceil{b\tgg}}^{\ceil{2b\tgg}}r^2t^{-1/\gam}
\ett{2tq_kq_l\ge Z_{k,l}}.
\end{equation}
For $k,l$ in the ranges in \eqref{kabo}, $q_k \le \CC a^{-\gam}t^{-1/2}$ and
$q_l \le \CCx b^{-\gam}t^{-1/2}$.
Define $J_{kl}:=\ett{2\CCx^2a^{-\gam}b^{-\gam}\ge Z_{k,l}}$,
$S_{m,n}:=\sum_{k\le m, l\le n} J_{kl}$, $\bS_{m,n}:=S_{m,n}/mn$ and
$\Sx:=\sup_{m,n\ge1} \bS_{m,n}$.
Then \eqref{kabo} implies, assuming $t\ge t_{a,b}:=\max\set{a^{-2\gam},b^{-2\gam}}$,
\begin{equation}\label{kaboh}
  \begin{split}
  \E \bigpar{V_{a,b,t}\mid\ZZ}
&
\le \sum_{ k=\ceil{a\tgg}}^{\ceil{2a\tgg}}\sum_{l=\ceil{b\tgg}}^{\ceil{2b\tgg}}
r^2t^{-1/\gam} J_{kl}
\le r^2 t^{-1/\gam} S_{\ceil{2a\tgg},\ceil{2b\tgg}} 
\\&
\le r^2 t^{-1/\gam} \ceil{2a\tgg}\ceil{2b\tgg} \Sx 
\le
9r^2 ab \Sx.    
  \end{split}
\raisetag\baselineskip
\end{equation}
Fix $p>1$ with $p<\gam$. Then by the multi-dimensional version of Doob's
$L^p$ inequality, see
\cite[Lemma 3]{Smythe},
\eqref{kaboh} implies, for fixed $r$,
\begin{equation}\label{kabohh}
  \begin{split}
\E\sup_{t\ge t_{a,b}}  \E \bigpar{V_{a,b,t}\mid\ZZ}
&\le
\CC ab \E \Sx
\le \CCx ab (\E (\Sx)^p)^{1/p}
\le \CC ab (\E I_{11})^{1/p}
\\&
\le \CC a^{1-\gam/p}b^{1-\gam/p}.
  \end{split}
\raisetag\baselineskip
\end{equation}

Let $\eps>0$, and use \eqref{kabohh} with $a=2^m\eps$ and $b=2^n\eps$. Then
summing over all $(m,n)\in\bbZ_+$ with $m\lor n\ge N$ implies, using
Markov's inequality,
\begin{equation}\label{kabohhh}
  \begin{split}
&\E\sup_{t\ge t_{\eps,\eps}}  
\P\bigpar{\hwt\text{ has an edge labelled 
 $(x,y)\in[\eps,\infty)^2\setminus[\eps,2^N\eps]^2$}\mid\ZZ}
\\&\qquad
\le \CC 2^{-(\gam/p-1)N}\eps^{2(1-\gam/p)}.
  \end{split}
\raisetag\baselineskip
\end{equation}
Choosing $N$ large enough, this is less than $\eps$. Furthermore,
the probability that $\hwt$ has a vertex labelled $<\eps$ is at most
$\P(\eta_1<\eps)<\eps$, and we can choose $n$ such that
$\P(\eta_n\le 2^N\eps)<\eps$.

It now follows from \eqref{hardb} that for any finite graph $H$,
\begin{equation}
\bigabs{\P\bigpar{G_r(\hwt)=H\mid \Gt} -\P(G_r(W)=H)}
\le 3\eps + o(1)  
\end{equation}
\as{} as \ttoo. 
Since $\eps>0$ is arbitrary, this shows $\bigpar{G_r(\hwt)\mid\Gt}\dto
G_r(W)$ \as{} as \ttoo, for every fixed $r<\infty$,
which is the same as $\hwt\togp W$.

Finally, we note that $\hwt\togp W$ implies $\hwt\togs W$, 
see \cite{VR2,SJ317},
and that $\togs$
is not affected by stretchings of the graphons; hence \as{}
also $W_{\Gt}\togs W$,
\ie, $\Gt\togs W$. 
\end{proof}

\begin{example}
  \label{Ehollywood+}
  Consider the simple graphs $\Gt$ and $\Gm$ given by the Hollywood model in
  \refE{Ehollywood} in the case $0<\ga<1$. As shown there, the resulting
  random graphs are the same as the ones given by the rank 1 model with a
  random probability distribution $(q_i)_1^\infty$ having the distribution
  $\PD(\ga,\gth)$, where $\gth>-\ga$ is the second parameter.  This implies
  that \as{} $q_i\sim Zi^{-1/\ga}$ for some (random) $Z>0$, see
  \cite[Theorem 3.13]{Pitman}.
Consequently, \refE{Epower} applies with $\gam=1/\ga$ 
(after conditioning on $(q_i)$).
In particular, \as{} $v(\Gm)\asymp m^\ga$ and $e(\Gm)\asymp m^\ga\log m$ as
\mtoo.  

Moreover, $\Gm$ has \as{} a power-law degree distribution with exponent
$\tau=2$ in the sense of \refT{Tpower}.

Furthermore,  
\refT{Tgraphex} shows that the stretched empirical graphon converges \as{} 
in the sense
$\hwt\togp W$, where $W$ is the random graphon 
$W(x,y)=1-\exp\bigpar{-2Z^2x^{-1/\ga}y^{-1/\ga}}$ on $\bbRp$.
\end{example}

\begin{problem}
  In the simple graph Hollywood model with $0<\ga<1$ as in \refE{Ehollywood+},
does the degree distribution of $\Gm$ converge (\as, or at least
in probability) as \mtoo?
If so, what is the asymptotic distribution?
Is it random or deterministic?
\end{problem}

\section{Extremely sparse examples}\label{SXsparse}
\CCreset\ccreset

We can obtain extremely sparse examples in several ways.

First, \refT{Tdust} shows that any example including dust or attached stars is
extremely sparse.

Another way to obtain  extremely sparse graphs is to force the degrees to be
bounded, as follows.

\begin{example}
Let  $\mu=(\mu\xij)_{i,j=1}^\infty$ be a symmetric non-negative
matrix with  $0<\norm\mu<\infty$ and assume that each row contains at most
$d$ non-zero entries, for some $d<\infty$.
(For example, let  $\mu$ be a band matrix, with $\mu_{ij}=0$
unless $0<|i-j|\le d/2$.)

Since an edge $ij$ can exist only when $\mu_{ij}>0$, it follows that every
vertex in $\Gm$ has degree at most $d$. Hence the sequence $\Gm$ has bounded
degree, and in particular $\Gm$ is sparse; to be more precise we have
\begin{equation}
v(\Gm)\le 2e(\Gm) \le d v(\Gm).  
\end{equation}
\end{example}

Less obviously, it is also possible to obtain extremely sparse graphs in the
rank 1 case, with a sequence $q_i$ that decreases very slowly (remember that
$\sum_i q_i=1$ by assumption). We give one such example.

\begin{example}
  Consider the rank 1 case (\refS{Srank1simple}) with
$q_i=c/(i\log^2i)$ for $i\ge2$, where $c$ is the appropriate
normalization constant.
(Any $(q_i)$ with $q_i\asymp 1/(i\log^2i)$ would yield the same results
below.) Recall that, by comparison with an integral,
$\sum_{i\ge k} 1/(i\log^2i)\sim 1/\log k$ as $k\to\infty$.

For large $t$, let $\ellt:=\floor{t/\log^2t}$.
Then $\ellt\log^2\ellt\sim t$, and thus, using 
\eqref{vt1},
\begin{equation}\label{vett}
  v(t)\asymp \sum_{i=1}^\infty \bigpar{(q_it)\land1}
\asymp \sum_{i\le\ellt}1+\sum_{i>\ellt}\frac{t}{i\log^2i}
\asymp\ellt+\frac{t}{\log\ellt}
\asymp \frac{t}{\log t}.
\end{equation}
\end{example}

The expected number of edges is by \eqref{et1},
\begin{equation}\label{xet}
  \begin{split}
e(t)\asymp\sum_{i\neq j} \bigpar{(q_iq_jt)\land 1}    
\asymp\sum_{i\neq j} \frac{t}{i\log^2(i+1)\,j\log^2(j+1)}\land 1
  \end{split}
\end{equation}
We split the sum in \eqref{xet} into three (overlapping) parts.
The case $j\ge t^{0.4}$ yields at most
\begin{equation}\label{xet1}
  \begin{split}
    \sum_{i\ge1}\sum_{j\ge t^{0.4}} \frac{t}{i\log^2(i+1)\,j\log^2(j+1)}
\le \CC \frac{t}{\log t}.
  \end{split}
\end{equation}
The case $i\ge t^{0.4}$ yields the same, and finally the case 
$i,j<t^{0.4}$ yields at most
\begin{equation}\label{xet3}
  \begin{split}
    \sum_{i<t^{0.4}}\sum_{j< t^{0.4}}1 <t^{0.8}=o\Bigparfrac{t}{\log t}.
  \end{split}
\end{equation}
By \eqref{xet}--\eqref{xet3}, and the lower bound \eqref{e>v},
we find
\begin{equation}\label{evt}
  e(t)
\asymp \frac{t}{\log t}
\asymp v(t).
\end{equation}
Thus \refT{Too} yields $e(\Gt)\asymp v(\Gt)$ and $e(\Gm)\asymp v(\Gm)$ a.s.
In other words, $\Gm$ is extremely sparse.

We can be more precise.
Recall that $N_i(t)$ is the degree of vertex $i$ in $\GGt$; by \eqref{Ni}
$N_i(t)\sim \Po(\mu_i t)$.
Consequently, using also \eqref{mui1},
\begin{equation}
  \E \bigpar{N_i(t)\indic{N_i(t)>1}}
\le \E \bigpar{N_i(t)(N_i(t)-1)} =(\mu_it)^2\le 4q_i^2t^2.
\end{equation}
Summing over $i>t/\log^2t$ we obtain
\begin{equation}\label{berga}
\sum_{i>t/\log^2t}  \E \bigpar{N_i(t)\indic{N_i(t)>1}}
\le  \CC \sum_{i>t/\log^2 t} \frac{t^2}{i^2\log^4 i}
=O\Bigparfrac{t}{\log^2t}.
\end{equation}
Hence the expected number of edges that have one endpoint in 
$(t/\log^2t,\infty)$ and that endpoint is not isolated is $O(t/\log^2t)$.
Moreover, the expected number of edges with both endpoints in
$[1,t/\log^2t]$ is at most 
\begin{equation}\label{suntak}
  \begin{split}
  \sum_{i<j\le\tlogtt}2((tq_iq_j)\land1)
\le t^{0.8} +
 \sum_{t^{0.4}<j\le\tlogtt}\sum_{i<j}2((tq_iq_j)\land1)
  \end{split}
\end{equation}
where the last sum is at most a constant times, \cf{} \eqref{vett},
\begin{equation}\label{velinga}
  \begin{split}
 \sum_{j=t^{0.4}}^{\tlogtt} v(tq_j)
&\asymp
 \sum_{j=t^{0.4}}^{\tlogtt} \frac{tq_j}{\log(tq_j+2)}
\asymp  \sum_{j=t^{0.4}}^{\tlogtt} \frac{t/j\log^2t}{\log(t/j\log^2t+2)}
\\&
\asymp  
\int_{x=t^{0.4}}^{\tlogtt} \frac{t/\log^2t}{\log(t/x\log^2t+2)}\frac{\dd x}{x}
=   \int_{y=1}^{t^{0.6}/\log^2 t} \frac{t/\log^2t}{\log(y+2)}\frac{\dd y}{y}
\\&
=O\Bigpar{\frac{t}{\log^2t}\log\log t}
=o\Bigpar{\frac{t}{\log t}}.    
  \end{split}
\raisetag\baselineskip
\end{equation}
It follows by \eqref{berga}, \eqref{suntak} and \eqref{velinga} that, 
in $\GGt$ and thus in $\Gt$, all but
$\op(\tlogt)$ edges have one endpoint isolated. If the number
of such edges in $\Gt$ is $e'(\Gt)$, then thus the total number of edges is
$e(\Gt)=e'(\Gt)+\Op(\tlogtt)$, and since each edge has at most two
endpoints, the 
number of vertices is at least $e'(\Gt)$ and at most
$2e'(\Gt)+\Op(\tlogtt)$. 
Moreover, it is easily seen that the expected number of edges with both
endpoints in $(\tlogtt,\infty)$ is $O(\tlogtt)$, and it follows that, in
fact,
$v(\Gt)=e'(\Gt)+\Op(\tlogtt)$; we omit the details. 
Consequently, using also \eqref{tooe} and \eqref{evt},  
it follows that
$e(\Gt)/v(\Gt)\pto1$ as \ttoo. Moreover, we see also that almost
all edges belong to stars. (These are not attached stars in the sense of
\refS{Sgraphs}, since our example contains no attached stars, but they have
a similar effect on the graph.) As a consequence, at least in probability,
most vertices have degree 1, so the asymptotic degree distribution is
concentrated at 1.

Furthermore, a large fraction of the edges (and thus vertices) belong to a
finite number of such stars. To be precise, let $\eps>0$; then there exists
an integer $K=K(\eps)<\infty$ such that summing over $i>K$ only in \eqref{xet1}
yields $<\eps\tlogt$, which together with \eqref{xet3} and
\eqref{berga}--\eqref{velinga} shows that the expected number of edges that
are not in a star with centre at $i$ for some $i\le K$ is
$O\bigpar{\eps\tlogt}=O\bigpar{\eps e(t)}$. 

Since \as{} $\Gm\subseteq\tG_{2m}$ for all large $m$, the same results follow
also for $\Gm$.

Unfortunately, these properties make the random graphs in this  example
rather uninteresting for applications.

\section{Conclusions}\label{Sconclusion}

For the multigraph version, the examples in \refS{Srank1} seem very
interesting, but perhaps a bit special. We do not know whether they are
typical for a large class of interesting examples or not.

For the simple graph version, 
the examples above show that a great variety of different behaviour.
Nevertheless, the results are somewhat disappoining for applications; 
the relations between
the intensity matrix $(\mu\xij)$ and properties of the random graphs $\Gm$
such as edge density and degree distribution are far from obvious, and it is
not clear how one can choose the intensity matrix to obtain desired
properties; for example, we do not know any example of a power-law degree
distribution with an  exponent $\tau\neq2$.

Consequently, for both versions, it seems desirable to study more examples,
as well as to find more general theorems.

The present paper is only a first step (or rather second step, after
\cite{BroderickCai,CampbellCaiBroderick,CraneD-edge,CraneD-relational}), 
of the investigation of these random graphs, and it seems too early to tell
whether they will be useful as random graph models for various applications
or not.

\section*{Acknowledgement}
This work was mainly
carried out during a visit to the 
Isaac Newton Institute for Mathematical Sciences
during the programme 
Theoretical Foundations for Statistical Network Analysis in 2016
(EPSCR Grant Number EP/K032208/1)
and was partially supported by a grant from the Simons foundation, and
a grant from
the Knut and Alice Wallenberg Foundation.
I thank Harry Crane and Peter Orbanz
for helpful conversations at the Issac Newton Institute.

\newcommand\AAP{\emph{Adv. Appl. Probab.} }
\newcommand\JAP{\emph{J. Appl. Probab.} }
\newcommand\JAMS{\emph{J. \AMS} }
\newcommand\MAMS{\emph{Memoirs \AMS} }
\newcommand\PAMS{\emph{Proc. \AMS} }
\newcommand\TAMS{\emph{Trans. \AMS} }
\newcommand\AnnMS{\emph{Ann. Math. Statist.} }
\newcommand\AnnPr{\emph{Ann. Probab.} }
\newcommand\CPC{\emph{Combin. Probab. Comput.} }
\newcommand\JMAA{\emph{J. Math. Anal. Appl.} }
\newcommand\RSA{\emph{Random Struct. Alg.} }
\newcommand\ZW{\emph{Z. Wahrsch. Verw. Gebiete} }
\newcommand\DMTCS{\jour{Discr. Math. Theor. Comput. Sci.} }

\newcommand\AMS{Amer. Math. Soc.}
\newcommand\Springer{Springer-Verlag}
\newcommand\Wiley{Wiley}

\newcommand\vol{\textbf}
\newcommand\jour{\emph}
\newcommand\book{\emph}
\newcommand\inbook{\emph}
\def\no#1#2,{\unskip#2, no. #1,} 
\newcommand\toappear{\unskip, to appear}

\newcommand\arxiv[1]{\texttt{arXiv:#1}}
\newcommand\arXiv{\arxiv}

\def\nobibitem#1\par{}

\end{document}